\newtheorem{thm}{Theorem}
\newtheorem{lemm}[thm]{Lemma}
\newtheorem{prop}[thm]{Proposition}
\newtheorem*{thmA}{Theorem A}
\newtheorem*{thmB}{Theorem B}
\newtheorem*{thmC}{Theorem C}
\newtheorem*{thmD}{Theorem D}
\newtheorem*{thmE}{Theorem E}
\theoremstyle{remark}
\newtheorem{rmk}{Remark}
\theoremstyle{definition}
\newtheorem*{defi}{Definition}
\title[On static three-manifolds with positive scalar curvature]{On static three-manifolds with \\ positive scalar curvature}
\author{Lucas Ambrozio}
\address{\noindent Imperial College London, South Kensington Campus, London, UK \newline \indent \textit{E-mail} : l.ambrozio@imperial.ac.uk}
\thanks{The author was supported by CNPq-Brazil}
\begin{document}
 
\begin{abstract}
We compute a Bochner type formula for static three\--mani\-folds and deduce some applications in the case of positive scalar curvature. We also explain in details the known general construction of the (Riemannian) Einstein $(n+1)$-manifold associated to a maximal domain of a static $n$-manifold where the static potential is positive. There are examples where this construction inevitably produces an Einstein metric with conical singularities along a codimension-two submanifold. By proving versions of classical results for Einstein four-manifolds for the singular spaces thus obtained, we deduce some classification results for compact static three-manifolds with positive scalar curvature.
\end{abstract}

\maketitle

\section{Introduction and statements of the main results}

\indent Let $(M^{n},g)$ be a complete Riemannian manifold. A \textit{static potential} is a non-trivial solution $V\in C^{\infty}(M)$ to the equation
\begin{equation} \label{introeqestatica}
  Hess_{g} V - \Delta_g V g - VRic_{g} = 0.
\end{equation}
\indent Riemannian manifolds admitting static potentials are very interesting geometric objects which arise in different contexts. For instance, the left hand side of (\ref{introeqestatica}) defines the formal adjoint of the linearisation of the scalar curvature and the exis\-tence of static potentials plays an important role in problems related to prescribing the scalar curvature function (see \cite{FisMar}, \cite{Bour} and also \cite{Cor}). On the other hand, given a solution to (\ref{introeqestatica}) in a three-manifold it is possible to construct a space-time satisfying the vacuum Einstein equations (with cosmological constant), whose properties, physically interpreted, justify the name \text{static} (see, for example, \cite{Wal}).  \\
\indent The existence of a static potential imposes many restrictions on the geometry of the underlying manifold. For instance, its scalar curvature must be constant and the set of zeroes of $V$ is a totally geodesic regular hypersurface \cite{Bour}. It might seem reasonable to believe that a description of all Riemannian manifolds admitting a static potential is possible. In dimension $n=3$, a solution to this classification problem seems to be more likely (because the Ricci tensor determines completely the Riemann curvature tensor) and has also physical interest (since $n=3$ is the relevant dimension in the General Theory of Relativity).
\begin{defi}
A \textit{static triple} is a triple $(M^n,g,V)$ consisting of a connected $n$-dimen\-sio\-nal smooth manifold $M$ with boundary $\partial M$ (possibly empty), a complete Riemannian metric $g$ on $M$ and a static potential $V\in C^{\infty}(M)$ that is \textit{non-negative and vanishes precisely on} $\partial M$. Two static triples $(M_{i},g_{i},V_{i})$, $i=1,2$, are said to be \textit{equivalent} when there exists a diffeomorphism $\phi : M_{1} \rightarrow M_{2}$ such that $\phi^{*}g_2 = c g_1$ for some constant $c>0$ and $V_2\circ \phi = \lambda V_1$ for some constant $\lambda>0$.
\end{defi}

\indent In other words, a static triple is a maximal connected domain where a static potential is positive, and two static triples are equivalent when they are isometric (up to scaling) with proportional static potentials. The classification problem is to describe all equivalence classes of static triples. \\
\indent The literature on the subject is extensive and many classification results are known particularly in the zero and negative scalar curvature cases (for example, see \cite{Isr}, \cite{HagRobSei}, \cite{Rob}, \cite{BunMas}, \cite{And} and \cite{BouGibHor}, \cite{Wan}, \cite{Qin}, \cite{AndChrDel}, \cite{GalSurWoo}, \cite{ChrSim}, \cite{LeeNev}). \\
\indent In this work we focus on three-dimensional static triples with positive scalar curvature. \\
\indent Fischer and Marsden \cite{FisMar} conjectured that the standard unit round spheres $(S^n,g_{can})$ are the only closed Riemannian manifolds with scalar curvature $n(n-1)$ admitting static potentials. In fact, the linear combinations of the coordinate functions are all solutions to (\ref{introeqestatica}) in $(S^n,g_{can})$. This conjecture was soon realized to be too optimistic. Kobayashi \cite{Kob} and Lafontaine \cite{Laf} proved more generally a classification result for locally conformally flat static triples in all dimensions. We state their result in dimension $n=3$ as follows.
\begin{thm}[Kobayashi \cite{Kob}, Lafontaine \cite{Laf}] \label{thmconfflat}
 Let $(M^3,g,V)$ be a static triple with positive scalar curvature. If $(M^3,g)$ is locally conformally flat, then $(M^3,g,V)$ is covered by a static triple that is equivalent to one of the following static triples:
 \begin{itemize}
  \item[$i)$] The standard round hemisphere,
  \begin{equation*}
   \left(S^3_{+} , g_{can} , V = x_{n+1}\right).
  \end{equation*}
  \item[$ii)$] The standard cylinder over $S^2$ with the product metric,
  \begin{equation*}
   \left( \left[0,\frac{\pi}{\sqrt{3}}\right] \times S^2 , g_{prod}=dt^2 + \frac{1}{3} g_{can}, V = \frac{1}{\sqrt{3}} \sin(\sqrt{3} t) \right).
  \end{equation*} 
  \item[$iii)$] For some $m\in (0,\frac{1}{3\sqrt{3}})$, the triple
  \begin{equation*}
   \left( [r_h(m),r_{c}(m)]\times S^2 , g_m = \frac{dr^2}{1-r^2-\frac{2m}{r}} + r^2 g_{can}, V_m = \sqrt{1-r^2-\frac{2m}{r}}    \right),
  \end{equation*}
  \noindent where $r_h(m) < r_c(m)$ are the positive zeroes of $V_m$.
 \end{itemize}
\end{thm}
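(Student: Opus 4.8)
The plan is to prove that local conformal flatness forces $g$ to be, locally around the region where $\nabla V\neq 0$, a warped product over an interval with a constant-curvature surface as fibre, and then to integrate the resulting ordinary differential equations. I normalise the (constant, positive) scalar curvature to $R=6$, and I use the static equation in the equivalent form
\[
\mathrm{Hess}\,V=V\Big(\mathrm{Ric}-\tfrac{R}{2}g\Big),
\]
obtained by eliminating $\Delta V$ via the trace $\Delta V=-\tfrac{R}{2}V$, together with the recalled facts that $R$ is constant and that $\partial M=\{V=0\}$ is a regular, totally geodesic surface.

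The heart of the matter is a pointwise identity coming from differentiating the static equation. In dimension three, local conformal flatness is equivalent to the vanishing of the Cotton tensor, which—since $R$ is constant—says precisely that $\mathrm{Ric}$ is a Codazzi tensor, $\nabla_{i}R_{jk}=\nabla_{j}R_{ik}$. Differentiating $\nabla_{j}\nabla_{k}V=V(R_{jk}-\tfrac{R}{2}g_{jk})$ once more, skew-symmetrising in the first two indices, and using the Ricci identity together with the three-dimensional formula expressing the Riemann tensor through $\mathrm{Ric}$, many terms cancel and what remains is an algebraic identity relating $\nabla V$, $\mathrm{Ric}(\nabla V,\cdot)$ and $g$. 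Reading it off in an orthonormal frame whose first vector is $\nu=\nabla V/|\nabla V|$ gives: \emph{wherever $\nabla V\neq 0$, the vector $\nabla V$ is an eigenvector of $\mathrm{Ric}$ and the eigenvalue on $\nu^{\perp}$ has multiplicity two}; equivalently, the trace-free Ricci tensor is a multiple of $\nu\otimes\nu-\tfrac{1}{3}g$. I expect this step to be the main obstacle, since it is the only place where conformal flatness is genuinely used and it demands careful bookkeeping of the curvature identity and of sign conventions.

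Granting this, the remainder is structure and ODEs. On $\Omega=\{\nabla V\neq 0\}$ the Bochner-type identity $\nabla|\nabla V|^{2}=2\,\mathrm{Hess}\,V(\nabla V,\cdot)=2V\big(\mathrm{Ric}(\nabla V,\cdot)-\tfrac{R}{2}dV\big)$ is parallel to $\nabla V$ by the eigenvector property; hence $|\nabla V|$—and, because $R$ is constant, also the Ricci eigenvalues—depends only on $V$, so $V$ has no critical point on $\Omega$ and the unit-speed gradient flow identifies $\Omega$, at least locally, with $(a,b)\times\Sigma$ carrying a metric $dr^{2}+g_{r}$. The second fundamental form of a level surface equals $|\nabla V|^{-1}\mathrm{Hess}\,V$ restricted to it, which by the eigenvalue structure is a multiple of the induced metric with coefficient a function of $r$ alone; so the level surfaces are totally umbilic of constant mean curvature and $g_{r}=\phi(r)^{2}h$ for a fixed metric $h$ on $\Sigma$. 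Thus $g=dr^{2}+\phi(r)^{2}h$, and constancy of $R$ together with the warped-product curvature formula forces $(\Sigma,h)$ to have constant Gauss curvature, say $k$. Substituting into the static equation (and normalising $V$) gives $V=|\phi'|$ and, after one integration, the first-order equation
\[
(\phi')^{2}=k-\phi^{2}-\frac{2m}{\phi}
\]
for a constant $m$; using $\phi$ itself as the coordinate recovers $g=\dfrac{dr^{2}}{\,k-r^{2}-2m/r\,}+r^{2}h$ with $V=\sqrt{k-r^{2}-2m/r}$.

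It remains to identify the three cases and to pass from local to global. Requiring $g$ to close up smoothly where $\phi'=0$—either a regular point of $V$ on $\partial M$, or a collapsing fibre, which is smooth only when $h$ is the unit round sphere—together with $R>0$ forces $k>0$; after rescaling $h$ we take $k=1$. Then $m=0$ gives the round hemisphere $i)$; the degenerate branch, in which $\phi$ is constant and the relation $V=|\phi'|$ is replaced by $\mathrm{Ric}(\nu,\nu)=0$, so that $g$ is a product $dr^{2}+g_{\Sigma}$ with $(\Sigma,g_{\Sigma})$ of constant curvature $3$ and $V$ a sine, gives the cylinder $ii)$; the range $m\in(0,\tfrac{1}{3\sqrt{3}})$ gives precisely the family $iii)$, the two positive zeros of $V_{m}$ being the two components of $\partial M$; and the remaining values of $m$ produce a metric singularity in the interior, or no metric at all, and are excluded. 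Finally, a closed surface of constant positive curvature is $S^{2}$ or $\mathbb{RP}^{2}$, and more generally the local warped-product description only upgrades to a global one after passing to a finite isometric cover respecting $V$, which is why the statement is phrased up to covering. The one remaining delicate point is the behaviour of $V$ at its interior critical points, which do occur (for instance in $ii)$ and $iii)$): there $\nabla V=0$, one inspects the Hessian $V(\mathrm{Ric}-\tfrac{R}{2}g)$ to decide whether the point is a nondegenerate maximum (the pole of $i)$) or a critical submanifold along which $V$ is maximal (the symmetric slices of $ii)$, $iii)$), rules out the remaining possibilities by a completeness and connectedness argument, and checks that the warped-product pieces match smoothly across such loci.
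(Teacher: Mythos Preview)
The paper does not contain a proof of this theorem: it is stated in the Introduction as a result of Kobayashi and Lafontaine, with references \cite{Kob}, \cite{Laf}, and is used throughout as a black box (e.g.\ at the end of the proof of Theorem~A). The only trace of the argument in the paper is a remark in Section~3 that on a regular level set $V^{-1}(c)$ the norm $|C|$ can be expressed through the trace-free second fundamental form and the tangential gradient of $|\nabla V|^{2}$, and that this computation ``is useful to prove the classification results of \cite{Kob} and \cite{Laf}''; relatedly, Lemma~\ref{lemnormacott} shows that $C=0$ at a point with $V\neq 0$ forces either $\nabla V=0$ or $\mathring{Ric}$ to have at most two eigenvalues there.

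Your outline is essentially the classical Kobayashi--Lafontaine proof and is sound in its architecture: vanishing Cotton plus constant $R$ makes $\mathrm{Ric}$ Codazzi; differentiating the static equation and skew-symmetrising yields the eigenvector property for $\nabla V$ (this is exactly what the paper's Lemma~\ref{lemnormacott} encodes); this forces level sets of $V$ to be totally umbilic with $|\nabla V|$ constant along them, hence a warped product $dr^{2}+\phi(r)^{2}h$ with $h$ of constant curvature; and the ODE analysis you sketch, leading to $(\phi')^{2}=k-\phi^{2}-2m/\phi$, correctly produces the three families. The places that would need fleshing out in an actual write-up are the ones you flag yourself: the careful curvature bookkeeping for the eigenvector step, the analysis at interior critical points of $V$ (distinguishing an isolated nondegenerate maximum, which gives the pole of $S^{3}_{+}$, from a critical level surface), and the passage from the local warped-product description to the global statement up to finite cover. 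None of these is a gap in the strategy; they are the expected details.
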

\begin{rmk}
 A static triple $(\tilde{M}^n,\tilde{g},\tilde{V})$ covers a static triple $(M^n,g,V)$ when there exists a  covering map $\pi: \tilde{M}\rightarrow M$ such that $\tilde{g}=\pi^{*}g$ and $\tilde{V}=V\circ\pi$.
\end{rmk}
\begin{rmk}
 We have normalized the scalar curvature of the examples above to be $6$. They are sometimes referred as (time-symmetric slices of) the \textit{de Sitter space}, the \textit{Nariai space} and the \textit{Schwarzschild-de Sitter spaces of positive mass $m$}, respectively (see \cite{BouGibHor}). 
\end{rmk}
\begin{rmk}
 The three types of locally conformally flat manifolds described above can be distinguished from the behaviour of their Ricci tensor. The round sphere is an Einstein manifold, the standard cylinder has parallel Ricci tensor but is not Einstein, and the Schwarzschild-de Sitter spaces of positive mass are locally conformally flat but do not have parallel Ricci tensor.
\end{rmk}
\indent In dimension $n=3$, the above list contains all known examples of compact simply connected static triples with positive scalar curvature. Qing and Yuan \cite{QinYua} proved uniqueness of these examples assuming a weaker hypothesis on the Cotton tensor. \\
\indent It is interesting to observe that precisely two more examples can be obtained as quotients of the standard cylinder, one of them \textit{non-orientable} with \textit{two} boundary components, the other one \textit{orientable} with \textit{connected} boundary (see their description in the end of Section 7). One should think about this last example in the context of the ``cosmic no-hair conjecture'' suggested in \cite{BouGibHor} and correctly stated in \cite{BouGib}. The existence of examples with more than two boundary components was also speculated in \cite{BouGibHor}. In any case, the question remains whether the standard round hemisphere is the only compact \textit{simply connected} static triple $(M^3,g,V)$ with positive scalar curvature and \textit{connected} boundary.  \\
\indent An important result towards the answer to this question states that the standard hemisphere has the maximum possible boundary area among static triples with positive scalar curvature and connected boundary. More precisely, the following theorem holds.
\begin{thm}[Boucher-Gibbons-Horowitz \cite{BouGibHor}, Shen \cite{Shen}] \label{thm4pi}
 Let $(M^3,g,V)$ be a compact oriented static triple with connected boundary and scalar curvature $6$. Then $\partial M$ is a two-sphere whose area satisfies the inequality
 \begin{equation*}
  |\partial M| \leq 4\pi.
 \end{equation*}
 \indent Moreover, equality holds if and only if $(M^3,g,V)$ is equivalent to the standard hemisphere.
\end{thm}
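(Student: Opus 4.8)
The plan is to reduce the statement to three integral identities — a bulk-to-boundary identity for $V$, one for the Ricci tensor, and the Gauss–Bonnet theorem on $\partial M$ — which, once combined, collapse into a single inequality whose right-hand side is manifestly nonnegative. First I would record the boundary behaviour. With $n=3$ and $R\equiv 6$, taking the trace of (\ref{introeqestatica}) gives $\Delta_g V=-3V$; in particular $\Delta_g V$ vanishes on $\partial M=\{V=0\}$, so (\ref{introeqestatica}) itself forces $Hess_g V=(\Delta_g V)\,g+V\,Ric_g=0$ along $\partial M$. Since $\partial M$ is a regular, totally geodesic level set of $V$ (recalled in the Introduction; see \cite{Bour}), its second fundamental form is proportional to the tangential part of $Hess_g V$ and hence vanishes, and $|\nabla V|^2$ has vanishing derivative along $\partial M$; thus $|\nabla V|\equiv\kappa$ is a positive constant on the connected surface $\partial M$, and $\nabla V=-\kappa\,\nu$ there, $\nu$ being the outward unit normal.

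Next I would establish the three identities. Integrating $\Delta_g V=-3V$ over $M$ gives $\kappa\,|\partial M|=3\int_M V\,dM$. Since $R$ is constant, the contracted second Bianchi identity gives $\operatorname{div}_g Ric_g=0$, hence $\operatorname{div}_g\!\big(Ric_g(\nabla V,\cdot)\big)=\langle Ric_g,Hess_g V\rangle$; integrating this and substituting $Hess_g V=V(Ric_g-3g)$ from (\ref{introeqestatica}), together with $|Ric_g|^2=|E_g|^2+12$ where $E_g:=Ric_g-2g$ is the trace-free Ricci tensor, yields
$$ \kappa\int_{\partial M} Ric_g(\nu,\nu)\,dA \;=\; -\int_M V\big(|E_g|^2-6\big)\,dM . $$
Finally, since $\partial M$ is totally geodesic in a three-manifold with $R\equiv 6$, the Gauss equation identifies the Gauss curvature of $\partial M$ with $\tfrac{R}{2}-Ric_g(\nu,\nu)=3-Ric_g(\nu,\nu)$, so Gauss–Bonnet reads $2\pi\chi(\partial M)=3|\partial M|-\int_{\partial M}Ric_g(\nu,\nu)\,dA$.

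Combining these is bookkeeping: eliminating $\int_{\partial M}Ric_g(\nu,\nu)$ between the last two identities and inserting $\int_M V\,dM=\tfrac{\kappa}{3}|\partial M|$, all the $|\partial M|$-terms recombine and leave exactly
$$ 2\pi\,\chi(\partial M)-|\partial M| \;=\; \frac{1}{\kappa}\int_M V\,|E_g|^2\,dM \;\geq\; 0 . $$
Hence $|\partial M|\leq 2\pi\chi(\partial M)$; as $\partial M$ is a connected, closed, oriented surface of positive area, this forces $\chi(\partial M)=2$, so $\partial M\cong S^2$ and $|\partial M|\leq 4\pi$. If $|\partial M|=4\pi$ then $\int_M V|E_g|^2\,dM=0$, and since $V>0$ in the interior, $E_g\equiv 0$; thus $(M^3,g)$ is Einstein, hence a space form and in particular locally conformally flat, so by Theorem~\ref{thmconfflat} — using that the hemisphere, being simply connected, admits no nontrivial covers — $(M^3,g,V)$ is the standard round hemisphere, which indeed realizes equality.

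The integrations by parts and the Gauss–Bonnet computation are routine; the part requiring care is the preliminary verification that $\partial M$ is a regular level set on which $Hess_g V$ vanishes — this is what makes $\kappa$ well defined and $\partial M$ totally geodesic — and then matching constants so that the three identities genuinely collapse into the displayed equality. I expect no serious obstacle beyond this, the only conceptual subtlety being the equality case, where nontrivial covers are excluded by appealing to Theorem~\ref{thmconfflat}.
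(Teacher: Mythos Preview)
Your proof is correct and is essentially the same as the paper's. The paper packages your two divergence integrations into a single one via Shen's identity $div\big(\tfrac{1}{V}\nabla(|\nabla V|^2+V^2)\big)=2V|\mathring{Ric}|^2$, noting that $\tfrac{1}{V}\nabla(|\nabla V|^2+V^2)=2\big(Ric(\nabla V,\cdot)-2\nabla V\big)$; integrating this and applying the Gauss equation and Gauss--Bonnet on $\partial M$ yields exactly your displayed identity $2\pi\chi(\partial M)-|\partial M|=\kappa^{-1}\int_M V|\mathring{Ric}|^2$ (this is the connected-boundary case of the paper's formula~(\ref{eqgaussbonnetchern1})), and the equality analysis is the same.
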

\indent This theorem was also generalized to allow more boundary components (for the precise statement, see Section 2, Proposition \ref{propshenform}). Hijazi, Montiel and Raulot \cite{HijMonRau} established a similar result that involves a inequality for the first eigenvalue of the induced Dirac operator of each boundary component. \\
\indent Theorem \ref{thm4pi} has some remarkable interpretations. The inequality for the area of the boundary can be thought as a Penrose like inequality in the context of positive scalar curvature (as suggested in \cite{BouGibHor}). The rigidity phenomenon associated to the equality case is related to Min-Oo's Conjecture, which is known to be false after the work of Brendle, Marques and Neves \cite{BreMarNev}. In view of that, Theorem \ref{thm4pi} gives an interesting rigidity result for the standard round hemisphere under a nice extra geometric assumption (for related results, see for example \cite{HanWan}, \cite{Eic} and \cite{MarNev}). \\
\indent In this paper we prove some classification results for static three-ma\-ni\-folds with positive scalar curvature. In particular, we obtain a stronger version of Theorem \ref{thm4pi} that includes the oriented static quotient of the standard cylinder in the picture. Our results may also be used to rule out some possible new examples of compact static triples with positive scalar curvature. \\
\indent The main ingredient is a Bochner type formula, which holds for any three-dimensional static triple. Let $R$ denote the scalar curvature, $\mathring{Ric}=Ric-(R/3)g$ denote the trace-free part of the Ricci tensor and $C$ denote the Cotton tensor of $(M^3,g)$. $C$ is the $3$-tensor defined by
\begin{multline*}
 C(X,Y,Z) = (\nabla_{Z}Ric)(X,Y) - (\nabla_{Y}Ric)(X,Z) \\ - \frac{1}{4}(dR(Z)g(X,Y) - dR(Y)g(X,Z))
\end{multline*}
\noindent for all $X,Y,Z\in \mathcal{X}(M)$. A classical result asserts that $(M^3,g)$ is locally conformally flat if and only if its Cotton tensor vanishes (see \cite{Aub}, Chapter 4, Section 3.2). \\
\indent If $(M^3,g)$ admits a static potential $V$, we prove the following identity:
\begin{equation} \label{eqBochner}
 \frac{1}{2}div(V\nabla|\mathring{Ric}|^2) = \left(|\nabla\mathring{Ric}|^2 + \frac{|C|^2}{2}\right)V + \left(R |\mathring{Ric}|^2 + 18 det(\mathring{Ric})\right)V.
\end{equation}
\indent Assuming a pointwise inequality for the traceless Ricci tensor we deduce the first consequence of formula (\ref{eqBochner}).
\begin{thmA} \label{thmBochner}
 Let $(M^3,g,V)$ be a compact oriented static triple with positive scalar curvature. If 
 \begin{equation*}
  |\mathring{Ric}|^2 \leq \frac{R^2}{6},
 \end{equation*}
 \noindent then one of the following alternatives holds:
 \begin{itemize}
  \item[$i)$] $\mathring{Ric}= 0$ and $(M^3,g,V)$ is equivalent to the standard hemisphere; or
  \item[$ii)$] $|\mathring{Ric}|^2 = R^2/6$ and $(M^3,g,V)$ is covered by a static triple that is equivalent to the standard cylinder.
 \end{itemize}
\end{thmA}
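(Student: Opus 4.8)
The plan is to integrate the Bochner type formula (\ref{eqBochner}) over the compact manifold $M$ and exploit the sign of each term. Since $V$ vanishes on $\partial M$, so does the vector field $V\nabla|\mathring{Ric}|^2$, and the divergence theorem gives $\int_M div(V\nabla|\mathring{Ric}|^2) = 0$. Hence (\ref{eqBochner}) yields
\[
 \int_M \left(|\nabla\mathring{Ric}|^2 + \frac{1}{2}|C|^2\right)V \;+\; \int_M \left(R|\mathring{Ric}|^2 + 18\det(\mathring{Ric})\right)V \;=\; 0 .
\]
The first integrand is nonnegative because $V\geq 0$. For the second I would use the sharp algebraic inequality $18\det T \geq -\sqrt{6}\,|T|^3$, valid for every trace-free symmetric bilinear form $T$ on a three-dimensional inner product space, with equality if and only if the eigenvalues of $T$ are $\mu,\mu,-2\mu$ for some $\mu\geq 0$; this follows by minimising $\lambda_1\lambda_2\lambda_3$ subject to $\lambda_1+\lambda_2+\lambda_3 = 0$ and $\lambda_1^2+\lambda_2^2+\lambda_3^2$ fixed. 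Applied to $T = \mathring{Ric}$ it gives
\[
 R|\mathring{Ric}|^2 + 18\det(\mathring{Ric}) \;\geq\; |\mathring{Ric}|^2\bigl(R - \sqrt{6}\,|\mathring{Ric}|\bigr),
\]
and the right-hand side is nonnegative precisely because the hypothesis $|\mathring{Ric}|^2\leq R^2/6$ is the same as $\sqrt{6}\,|\mathring{Ric}|\leq R$. Therefore both integrands above are nonnegative with vanishing integral, hence vanish identically; as $V$ is positive in the interior of $M$ (and by continuity up to $\partial M$), we conclude that $\nabla\mathring{Ric}\equiv 0$, $C\equiv 0$, and $R|\mathring{Ric}|^2 + 18\det(\mathring{Ric})\equiv 0$ on $M$.

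In particular $|\mathring{Ric}|^2$ is a constant and, since $C\equiv 0$, the metric $g$ is locally conformally flat; thus Theorem \ref{thmconfflat} applies and $(M^3,g,V)$ is covered by a static triple that is equivalent to one of the three triples $i)$, $ii)$, $iii)$ of that theorem. Suppose first that $|\mathring{Ric}|\equiv 0$. Then $(M^3,g)$ is Einstein with $R>0$, and of the three models only the round hemisphere is Einstein (the cylinder has $\mathring{Ric}\neq 0$, and so do the Schwarzschild--de Sitter metrics); hence $(M^3,g,V)$ is covered by a static triple equivalent to $(S^3_+,g_{can},x_{n+1})$. To upgrade this to an equivalence, I would observe that the potential $x_{n+1}$ attains its maximum at a single point of the hemisphere, so that $V$ has a unique maximum point $q\in M$ and the covering map has a single point in the fibre over $q$; a covering of a connected base with a singleton fibre has one sheet, so the covering is an isometry and $(M^3,g,V)$ is itself equivalent to the standard hemisphere. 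This is alternative $i)$.

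Suppose instead that $|\mathring{Ric}|^2$ is a positive constant. Dividing the identity $18\det(\mathring{Ric}) = -R|\mathring{Ric}|^2$ by $|\mathring{Ric}|^2>0$ and using the algebraic inequality gives $R\leq \sqrt{6}\,|\mathring{Ric}|$, which together with the hypothesis forces $|\mathring{Ric}|^2 = R^2/6$; consequently equality holds in $18\det T\geq-\sqrt{6}\,|T|^3$, so $\mathring{Ric}$ has eigenvalues $\mu,\mu,-2\mu$ with $\mu>0$, and moreover $\nabla Ric = \nabla\mathring{Ric}\equiv 0$. Among the three models of Theorem \ref{thmconfflat} the hemisphere is excluded ($\mathring{Ric}= 0$) and the Schwarzschild--de Sitter metrics are excluded (their Ricci tensor is not parallel), so $(M^3,g,V)$ is covered by a static triple equivalent to the standard cylinder, which is alternative $ii)$. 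The one genuinely non-routine ingredient here is the sharp inequality for the determinant of a trace-free symmetric $3\times 3$ matrix together with the description of its equality case; the only other point needing a short argument is the passage from "covered by" to "equivalent to" in the Einstein case.
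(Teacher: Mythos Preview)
Your proof is correct and follows essentially the same route as the paper: integrate the Bochner formula (\ref{eqBochner}) using $V|_{\partial M}=0$, control the determinant term via the sharp algebraic inequality $18\det(\mathring{Ric})\geq -\sqrt{6}\,|\mathring{Ric}|^3$ (equivalently $54(\det A)^2\leq |A|^6$), deduce $\nabla\mathring{Ric}=0$, $C=0$ and either $\mathring{Ric}=0$ or $|\mathring{Ric}|^2=R^2/6$, then invoke the Kobayashi--Lafontaine classification. Your explicit argument upgrading ``covered by'' to ``equivalent to'' in the Einstein case (via the unique maximum of $V$ forcing the deck group to be trivial) is a detail the paper leaves implicit.
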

\indent In the next applications we work with the \textit{singular} Einstein manifold that can be constructed from a static triple. The general construction has already been discussed in the literature (for instance, in \cite{GibHaw} and \cite{BouGibHor}). When not singular, the obtained Riemannian manifolds are sometimes called \textit{gravitational instantons}, although this terminology might be confusing. \\
\indent As mentioned before, as a general fact, given a static triple $(M^n,g,V)$ with scalar curvature $R=\epsilon n(n-1)$, $\epsilon \in\{-1,0,1\}$, the metrics
\begin{equation*}
 h_{\pm} = \pm V^2 dt^2 + g 
\end{equation*}
\noindent  satisfy the Einstein equation $Ric_{h_{\pm}} = \epsilon n h_{\pm}$. While the Lorentzian metric $h_{-}$ has no singularities on $\mathbb{R}\times\partial M$  (as the vanishing of $V$ only means that $\partial_t$ is a light-like vector on this set), the Riemannian metric $h_{+}$ becomes singular on $\mathbb{R}\times\partial M$. One way to overcame this problem could be to identify the variable $t$ with period $2\pi$ (as done in \cite{BouGibHor} and \cite{GibHaw}). When this procedure removes the metric singularity, one obtains precisely the Einstein manifolds studied by Seshadri \cite{Ses}. However, although metric singularities might be inevitable, the singular space obtained is well-behaved: it has a cone-like singularity along a codimension-two submanifold. \\
\indent This type of singular space, sometimes called an \textit{edge space}, has been long studied in the literature and its theory presents many interesting geometric and analytical phenomena. For instance, there has been very recent progress in understanding the Yamabe problem in these spaces, see the work of Akutagawa, Carron and Mazzeo \cite{AkuCarMaz1} and Mondello \cite{Mon}. On the other hand, classical formulas like the Gauss-Bonnet-Chern formula for closed $4$-manifolds have generalizations to these spaces, see the work of Liu and Shen \cite{LiuShe} (and also the work of Atiyah and LeBrun \cite{AtiLeB}). For a comprehensive exposition on singular Riemannian spaces of stratified type, see \cite{Pfl}. \\
\indent In the second part of this paper, after describing carefully the above construction and the properties of the singular metric $h_{+}$ (see Section 6), we argue that part of the classical Bonnet-Myers Theorem and a deep theorem of Gursky \cite{Gur} on Einstein four-manifolds with positive scalar curvature hold for the singular Einstein manifolds obtained from a compact static triple with positive scalar curvature. As a consequence, we prove two results. \\
\indent The first one gives a description of the topology of compact static triples $(M^3,g,V)$ with positive scalar curvature. It is inspired by the work of Galloway \cite{Gal} on the topology of black holes and generalizes results of \cite{Ses}. Before stating it, we recall that a closed minimal surface $\Sigma^2$ in $(M^3,g)$ is called \textit{stable} when the second variation of the area is non-negative for all variations, and \textit{unstable} otherwise.
\begin{thmB}
 Let $(M^3,g,V)$ be a compact oriented static triple with positive scalar curvature. Then
 \begin{itemize}
  \item[$i)$] The universal cover of $M$ is compact.
  \item[$ii)$] If $\partial M$ contains unstable connected components, then $\partial M$ contains exactly one unstable connected component. In this case, $M$ is simply connected.
  \item[$iii)$] Each connected component of $\partial M$ is diffeomorphic to a sphere.
  \end{itemize}
\end{thmB}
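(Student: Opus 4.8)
The plan is to deduce all three statements from properties of the singular Einstein four-manifold canonically attached to $(M^{3},g,V)$, whose construction and basic features are developed in Section 6. Normalise so that the scalar curvature is $R=6$; tracing (\ref{introeqestatica}) gives $\Delta_{g}V=-3V$, and it is classical that $\partial M$ is a nonempty closed totally geodesic surface on which $|\nabla V|$ equals a positive constant $\kappa_{i}$ along each connected component $\partial M_{i}$ (nonempty, since integrating $\Delta_{g}V=-3V<0$ over a closed $M$ would be impossible). Choosing the variable $t$ to have period $2\pi/\max_{j}\kappa_{j}$, the metric $h_{+}=V^{2}\,dt^{2}+g$ descends to a compact oriented four-manifold $N$ equipped with a Riemannian metric that is smooth and Einstein, $Ric_{h_{+}}=3\,h_{+}$, away from $\bigsqcup_{i}\partial M_{i}$ and has a conical singularity of angle $\beta_{i}=2\pi\kappa_{i}/\max_{j}\kappa_{j}\le 2\pi$ transverse to each $\partial M_{i}$ (in particular $N$ is smooth whenever $\partial M$ is connected or all the $\kappa_{i}$ coincide, which is Seshadri's case \cite{Ses}). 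The projection $\pi\colon N\to M$ forgetting $t$ collapses the circle fibre to a point precisely over $\partial M$, and the slice $\{t=0\}$ is a section of $\pi$; consequently $\pi_{*}\colon\pi_{1}(N)\to\pi_{1}(M)$ is surjective.

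Part $(i)$ then follows from the version of the Bonnet--Myers theorem that survives the passage to these spaces: on an edge space with $Ric\ge 3\,h_{+}$ on the regular set and cone angles at most $2\pi$, the comparison geometry of a positive Ricci lower bound still produces a uniform diameter bound, and since the universal cover of $N$ is again an edge space of the same type (the cones, having angle $\le 2\pi$, are locally simply connected, so the covering is trivial near the edges), it too has bounded diameter and is therefore compact. Hence $\pi_{1}(N)$ is finite, and by the surjection above $\pi_{1}(M)$ is finite, so the universal cover of the compact manifold $M$ is compact.

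For part $(iii)$, observe first that each $\partial M_{i}$ is a closed orientable surface, being a two-sided level set in the oriented manifold $M$, and is totally geodesic, hence minimal. If $\partial M_{i}$ is stable, the second variation applied to the constant function gives $\int_{\partial M_{i}}Ric(\nu,\nu)\le 0$, while the Gauss equation together with $R=6$ gives $K_{\partial M_{i}}=3-Ric(\nu,\nu)$; therefore $2\pi\chi(\partial M_{i})=\int_{\partial M_{i}}K_{\partial M_{i}}\ge 3\,|\partial M_{i}|>0$, so $\partial M_{i}\cong S^{2}$. For an unstable component this three-dimensional argument breaks down, and one argues through $N$ instead: such a surface is exactly a conical edge of $N$, and the generalised Gauss--Bonnet--Chern formula for edge spaces \cite{LiuShe} applied to the Einstein metric $h_{+}$, combined with the rigidity obtained in part $(ii)$ below, forces it to be a sphere as well.

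Part $(ii)$ is the crux, and the adaptation it requires is, I expect, the main obstacle. The presence of an unstable boundary component supplies exactly the extra input — a definite amount of positive curvature, carried by the conical data of the associated edge of $N$ — needed to invoke a singular version of the deep theorem of Gursky \cite{Gur} on compact oriented Einstein four-manifolds with positive scalar curvature. Its conclusion constrains the topology of $N$ rigidly enough that, through the surjection $\pi_{1}(N)\twoheadrightarrow\pi_{1}(M)$, the manifold $M$ is forced to be simply connected; and the same rigidity is incompatible with the existence of a second unstable edge, yielding at most one unstable component (in line with the black-hole picture of \cite{BouGibHor} and the analysis of \cite{Gal}, \cite{Ses}). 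The genuinely hard point is to verify that Gursky's arguments — which rest on Weitzenb\"ock identities, the Gauss--Bonnet--Chern and Hirzebruch signature formulas, and refined Kato inequalities — go through in the presence of the codimension-two conical edges of angle $\le 2\pi$, and to relate the analytic data of each edge (its angle $\beta_{i}$) to the minimal-surface stability of the corresponding $\partial M_{i}$; this, together with the singular Bonnet--Myers theorem, is the content of Section 6.
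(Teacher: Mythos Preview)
Your approach to part~$(ii)$ is the genuine gap. You propose to extract both ``$M$ is simply connected'' and ``at most one unstable component'' from a singular version of Gursky's theorem, but you never explain what statement you would actually apply, nor how instability of a boundary component enters as an input. Gursky's result is an integral pinching theorem for Einstein four-manifolds; it gives curvature inequalities and a rigidity case, not a direct count of edges or a bound on $\pi_{1}$. Nothing in the conical data distinguishes stable from unstable boundary components---the cone angle $\beta_{i}$ depends only on $\kappa_{i}$, not on the sign of the second variation---so the ``extra input'' you invoke is not visible to the four-dimensional analysis. The paper in fact uses Gursky's machinery only for Theorem~C; Theorem~B~$(ii)$ is proved by a completely different, three-dimensional argument: one minimises area in the integral homology class of an unstable component $\partial_{u}M$ (Federer--Fleming), uses the normal flow with speed $V$ (Proposition~\ref{propflow}) to show that any interior minimiser forces the cylinder, and then reasons on the homology relations among boundary components as in \cite{LeeNev} to exclude a second unstable piece.

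Your part~$(iii)$ inherits the same gap for the unstable case, since you defer it to the rigidity of~$(ii)$. The paper handles it via Proposition~\ref{propplateau}: once $M$ is simply connected, $\pi_{1}(\partial M)\hookrightarrow\pi_{1}(M)=0$, so every component of $\partial M$ is a sphere. Your part~$(i)$ is close in spirit to the paper's, but the paper is more concrete: rather than appeal to a general singular Bonnet--Myers bound, it first locates (via Propositions~\ref{propmaxprin} and~\ref{propcurvbordo}) a boundary component with $K\ge 1$, hence a spherical component of $\partial\tilde M$ in the universal cover, and then bounds the distance from any regular point of the associated singular Einstein space to that compact set by a focal-point estimate.
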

\begin{rmk}
 Regarding the classification problem of compact static triples with positive scalar curvature, item $i)$ above allows to assume simply connectedness without loss of generality.
\end{rmk}
\indent The second result can be seen as a gap result for compact static triples $(M^3,g,V)$ with positive scalar curvature.
\begin{thmC}
 Let $(M^3,g,V)$ be a compact simply connected static triple with scalar curvature $6$. Then, one of the following alternatives holds:
 \begin{itemize}
  \item[$i)$] $(M^3,g,V)$ is equivalent to the standard hemisphere; or
  \item[$ii)$] $(M^3,g,V)$ is equivalent to the standard cylinder; or
  \item[$iii)$] Denoting by $\partial_{i} M$, $i=1,\ldots, r$, the connected components of $\partial M$ and by $k_{i}$ the (constant) value of $|\nabla V|$ on $\partial_{i} M$, the following inequality holds:
  \begin{equation*}
   \sum_{i=1}^{r} k_{i} |\partial_i M| < \frac{4\pi}{3}\sum_{i=1}^{r} k_{i}.
  \end{equation*}
 \end{itemize}  
\end{thmC}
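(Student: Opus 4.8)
The plan is to transfer the problem to the four-dimensional singular Einstein manifold $(N^{4},h_{+})$, $h_{+}=V^{2}dt^{2}+g$ with $t$ of period $2\pi$, constructed in Section 6 from $(M^{3},g,V)$, and to play the edge-cone Gauss--Bonnet--Chern formula off against the edge-cone analogue of Gursky's theorem. Recall that $(N,h_{+})$ is Einstein with $Ric_{h_{+}}=3h_{+}$, hence has constant scalar curvature $R_{h_{+}}=12$, and is a smooth Riemannian manifold away from the totally geodesic surfaces $\Sigma_{i}:=\{0\}\times\partial_{i}M$, along which it has a cone-edge of cone angle $2\pi k_{i}$. By Theorem B each $\partial_{i}M$ is a two-sphere, and a straightforward computation gives $\chi(N)=\sum_{i=1}^{r}\chi(\Sigma_{i})=2r$, while the involution $t\mapsto -t$ of $N$ forces $\tau(N)=0$. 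Since $h_{+}$ has the warped-product (``static'') form, the circle bundle $N\setminus\bigcup_{i}\Sigma_{i}\to M\setminus\partial M$ is trivial, so each $\Sigma_{i}$ has trivial normal bundle and $[\Sigma_{i}]^{2}=0$ in $N$; and $M$ being simply connected forces $b_{1}(N)=0$, hence $b^{+}(N)=b^{-}(N)=r-1$.

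Two identities then carry the argument. First, tracing the static equation gives $\Delta_{g}V=-3V$, and, since $\nabla V=-k_{i}\nu$ along $\partial_{i}M$, the divergence theorem yields
\[
 \mathrm{Vol}(N,h_{+})\;=\;2\pi\int_{M}V\,d\mu_{g}\;=\;\frac{2\pi}{3}\sum_{i=1}^{r}k_{i}\,|\partial_{i}M|,
\]
so the left-hand side of $iii)$ equals $\tfrac{3}{2\pi}\mathrm{Vol}(N,h_{+})$. Second, the edge-cone Gauss--Bonnet--Chern formula (Liu--Shen \cite{LiuShe}, Atiyah--LeBrun \cite{AtiLeB}), applied to the Einstein manifold $(N,h_{+})$, reads
\[
 \chi(N)\;=\;\frac{1}{8\pi^{2}}\int_{N}\Big(|W|^{2}+\frac{R_{h_{+}}^{2}}{24}\Big)\,d\mu_{h_{+}}\;+\;\sum_{i=1}^{r}(1-k_{i})\,\chi(\Sigma_{i}),
\]
where $W$ is the Weyl tensor of $h_{+}$. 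Substituting $R_{h_{+}}=12$, $\chi(\Sigma_{i})=2$ and $\chi(N)=2r$ leaves the clean relation $\int_{N}|W|^{2}d\mu_{h_{+}}=16\pi^{2}\sum_{i}k_{i}-6\,\mathrm{Vol}(N,h_{+})$, and combining this with the volume formula shows that $iii)$ is \emph{equivalent} to the Weyl-curvature estimate
\[
 \int_{N}|W|^{2}\,d\mu_{h_{+}}\;>\;\frac{R_{h_{+}}^{2}}{12}\,\mathrm{Vol}(N,h_{+}).
\]
Consistently, the hemisphere produces the round $S^{4}$ (so $W\equiv 0$, below the threshold) and the cylinder produces $S^{2}(1/\sqrt{3})\times S^{2}(1/\sqrt{3})$ (so $|W|^{2}\equiv R_{h_{+}}^{2}/12$, exactly at the threshold).

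It then remains to prove that a compact simply connected static triple with $R=6$ whose associated $(N,h_{+})$ fails the strict estimate above must be the hemisphere or the cylinder. For this I would invoke the edge-cone version of Gursky's theorem \cite{Gur}: for a compact edge-cone Einstein four-manifold of positive scalar curvature, $\int|W^{+}|^{2}d\mu\ge\frac{R^{2}}{24}\mathrm{Vol}$ whenever $b^{+}>0$, with equality only if the metric is conformally Kähler--Einstein, and $W^{+}\equiv 0$ whenever $b^{+}=0$ (and symmetrically with the opposite orientation). If $r=1$ then $b^{+}(N)=b^{-}(N)=0$, so $W^{+}\equiv 0\equiv W^{-}$, the Weyl tensor of $h_{+}$ vanishes, $(N,h_{+})$ has constant curvature, and hence $(M,g,V)$ is the standard hemisphere. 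If $r\ge 2$ then $b^{+}(N)=b^{-}(N)=r-1\ge 1$, and summing the two Gursky inequalities gives $\int_{N}|W|^{2}d\mu_{h_{+}}\ge\frac{R_{h_{+}}^{2}}{12}\mathrm{Vol}(N,h_{+})$; thus either the inequality is strict and $iii)$ holds, or equality forces $(N,h_{+})$ to be conformally Kähler--Einstein for both orientations, in which case the rigidity together with the circle-action structure of $h_{+}$ (a two-dimensional fixed-point set with trivial normal bundle, and $\tau(N)=0$) should leave only the round product $S^{2}(1/\sqrt{3})\times S^{2}(1/\sqrt{3})$, whence $(M,g,V)$ is the standard cylinder.

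The main obstacle is precisely the input for the last paragraph: proving that the Gauss--Bonnet--Chern formula and, above all, Gursky's theorem persist for the edge-cone Einstein metrics $h_{+}$ — the analytic heart of the construction of Section 6 — and then completing the rigidity discussion, that is, showing that among conformally Kähler--Einstein four-manifolds only the round $S^{4}$ and the round $S^{2}\times S^{2}$ carry the relevant isometric circle action, and identifying them with the hemisphere and the cylinder through the quotient structure.
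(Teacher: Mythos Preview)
Your overall architecture matches the paper's: pass to the singular Einstein four-manifold $(\mathcal{N}^4,h)$, use the edge-cone Gauss--Bonnet--Chern formula, and then a Gursky-type estimate $\int|W^{+}|^{2}\geq \tfrac{R^{2}}{24}\mathrm{Vol}$ to derive the inequality in $iii)$. Your reduction of $iii)$ to the Weyl estimate is correct and is exactly what the paper uses (this is the combination of Proposition~\ref{propshenform} with Lemma~\ref{lemmgaussbonnetchern}).

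Where your proposal diverges from the paper --- and where it has a genuine gap --- is in the version of ``Gursky's theorem'' you invoke. You assert that for an edge-cone Einstein four-manifold with $R>0$ one has $W^{+}\equiv 0$ whenever $b^{+}=0$. This is \emph{not} the content of Gursky's theorem, and I do not see how to prove it (even in the smooth case) from the dichotomy Gursky actually establishes. What Gursky proves (Theorem~B of \cite{Gur}) is the \emph{direct} dichotomy: either $W^{+}\equiv 0$, or $\int|W^{+}|^{2}\geq\tfrac{R^{2}}{24}\mathrm{Vol}$. The paper extends precisely this dichotomy to the singular space by solving the modified Yamabe problem $\tilde{R}-2\sqrt{6}|\tilde{W}^{+}|=\lambda\leq 0$ on $(\mathcal{N},h)$ (Theorem~\ref{thmsolveyamabe} and Proposition~\ref{propfund}); this is the analytic core of the argument, carried out with the Akutagawa--Carron--Mazzeo machinery and the refined Kato inequality (Proposition~\ref{propkato}). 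Note also that because the involution $\theta\mapsto -\theta$ reverses orientation, one has $|W^{+}|_{h}=|W^{-}|_{h}=|\mathring{Ric}_{g}|_{g}$ pointwise (Section~6), so your detour through the Betti numbers and the case split on $r$ is unnecessary: Gursky's dichotomy on $W^{+}$ alone already gives either $W\equiv 0$ (hemisphere) or the full estimate $\int|W|^{2}\geq\tfrac{R^{2}}{12}\mathrm{Vol}$.

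For the equality case your route through ``conformally K\"ahler--Einstein for both orientations plus circle action'' is vague, and the paper's argument is both different and cleaner. When equality holds, Proposition~\ref{propfund} produces a conformal metric $\hat{h}=\phi^{2}h$ with $\hat{R}=2\sqrt{6}|\hat{W}^{+}|$ a positive constant, and the Gauss--Bonnet--Chern identity then forces $\hat{h}$ to be Einstein. A short computation with the conformal transformation law for $\mathrm{div}\,W^{+}$ (as in \cite{Gur}) forces $\phi$ to be constant, so $|\mathring{Ric}_{g}|_{g}^{2}=|W^{+}|_{h}^{2}=R_{h}^{2}/24=6$ pointwise, and Theorem~A then identifies $(M,g,V)$ with the standard cylinder. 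This replaces your proposed classification of K\"ahler--Einstein manifolds with circle action by a direct reduction to the three-dimensional Bochner formula already proved in the paper.
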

\begin{rmk}
  This theorem should be compared with Theorem B in \cite{Gur}. In fact, when the associated Einstein manifold to a static triple satisfying the hypotheses of Theorem C has no singularities, the result is a direct corollary of that theorem (see Section 6 for more details).
\end{rmk}
\begin{rmk}
 The inequality in $iii)$ is invariant under rescaling of the static potential. Theorem C is sharp in the sense that one can explicitly verify that:
 \begin{itemize}
 \item[$i)$] For the standard cylinder, $k_1=k_2$ and $|\partial_1 M|=|\partial_2 M| = 4\pi/3$.
 \item[$ii)$] For the Schwarzschild-de Sitter spaces of mass $m\in(0,1/3\sqrt{3})$,
 \begin{equation*}
  \frac{k_{1} |\partial_1 M| + k_2|\partial_2 M|}{k_1+k_2}
 \end{equation*} 
 \noindent is an increasing function of $m$ which converges to $0$ as $m\rightarrow 0$ and to $4\pi/3$ as $m\rightarrow 1/3\sqrt{3}$. Moreover, $|\partial_1 M| < 4\pi/3 < |\partial_2 M|$ and $k_1 > k_2$ for all $m\in (0,1/3\sqrt{3})$.
 \end{itemize}
\end{rmk}
\indent As an immediate application, we can state
\begin{thmD}
 Let $(M^3,g,V)$ be a compact simply connected static triple with connected boundary and scalar curvature $6$. If
 \begin{equation*}
  |\partial M| \geq \frac{4\pi}{3},
 \end{equation*}
\noindent then $(M^3,g,V)$ is equivalent to the standard hemisphere. 
\end{thmD}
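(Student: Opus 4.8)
The plan is to deduce Theorem D directly from Theorem C, using the hypothesis that the boundary is connected to eliminate two of the three alternatives that theorem offers. So let $(M^3,g,V)$ be a compact simply connected static triple with scalar curvature $6$ and connected boundary, and suppose $|\partial M|\geq \frac{4\pi}{3}$. Applying Theorem C, one of the alternatives $(i)$, $(ii)$, $(iii)$ must hold, and I would argue that the hypotheses force $(i)$.

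First, alternative $(ii)$ is incompatible with the connectedness of $\partial M$. Indeed, the standard cylinder $\left([0,\frac{\pi}{\sqrt 3}]\times S^2,\ dt^2+\frac13 g_{can},\ \frac{1}{\sqrt 3}\sin(\sqrt 3\, t)\right)$ has boundary $\{0,\frac{\pi}{\sqrt 3}\}\times S^2$, which has two connected components; since an equivalence of static triples is in particular a diffeomorphism, every static triple equivalent to the standard cylinder has disconnected boundary. Hence $(ii)$ cannot occur.

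Next, alternative $(iii)$ is incompatible with $|\partial M|\geq \frac{4\pi}{3}$. Since $\partial M$ is connected we have $r=1$, so the inequality in $(iii)$ becomes $k_1|\partial M| < \frac{4\pi}{3}\,k_1$, where $k_1$ is the constant value of $|\nabla V|$ on $\partial M$. Because the zero set of a static potential is a regular hypersurface (as recalled in the introduction), $\nabla V$ does not vanish on $\partial M = V^{-1}(0)$, so $k_1>0$; dividing through by $k_1$ gives $|\partial M| < \frac{4\pi}{3}$, contradicting our assumption. Hence $(iii)$ cannot occur either.

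Therefore alternative $(i)$ must hold, i.e.\ $(M^3,g,V)$ is equivalent to the standard hemisphere, which is exactly the assertion of Theorem D. I do not expect any genuine obstacle: all of the analytic content is already absorbed into Theorem C, and within this short deduction the only point that must not be overlooked is the strict positivity of $k_1$, which is precisely what upgrades $k_1|\partial M|<\frac{4\pi}{3}k_1$ to the strict inequality $|\partial M|<\frac{4\pi}{3}$ needed to rule out case $(iii)$.
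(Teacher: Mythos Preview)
Your proof is correct and is precisely the argument the paper has in mind: the paper presents Theorem D as ``an immediate application'' of Theorem C without writing out any further details, and your elimination of alternatives $(ii)$ and $(iii)$ using connectedness of $\partial M$ and positivity of $k_1$ is exactly that immediate deduction.
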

\indent Another corollary of the previous results is the following 
\begin{thmE}
 Let $(M^3,g,V)$ be a compact static triple with scalar curvature $6$ and non-negative Ricci curvature. Then $(M^3,g,V)$ is equivalent to the standard hemisphere or is covered by the standard cylinder.
\end{thmE}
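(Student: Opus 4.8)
To prove Theorem~E the plan is to run the integration argument behind Theorem~A, but with its pointwise hypothesis $|\mathring{Ric}|^2\le R^2/6$ replaced by a weaker pointwise inequality that still follows from $Ric\geq0$ and that is all the integration of the Bochner formula~\eqref{eqBochner} actually needs. A first, harmless reduction is to the oriented case: if $M$ is non-orientable one passes to its connected orientation double cover, which is again a compact static triple with $R=6$ and $Ric\geq0$; and since every isometry of the standard hemisphere preserving its static potential fixes the pole, the hemisphere carries no free isometric involution preserving its potential, so this double cover can never be the hemisphere, and the conclusion ``covered by the standard cylinder'' is stable under the reduction.

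The analytic core is the following pointwise fact, to be proved first: for any three-dimensional static triple with $R=6$ and $Ric\geq0$,
\[
 R\,|\mathring{Ric}|^2 + 18\det(\mathring{Ric}) \;\geq\; 0 \qquad \text{at every point,}
\]
with equality at a point precisely when $\mathring{Ric}=0$ there or the eigenvalues of $Ric$ are $0,3,3$ there. This is purely algebraic. If $\mu_1\leq\mu_2\leq\mu_3$ denote the eigenvalues of $\mathring{Ric}$, then $\mu_1+\mu_2+\mu_3=0$, and since $Ric=\mathring{Ric}+2g\geq0$ also $\mu_i\geq-2$; the left-hand side equals $6\left(\mu_1^2+\mu_2^2+\mu_3^2+3\mu_1\mu_2\mu_3\right)$, and a short computation --- for instance, minimising $\mu_1^2+\mu_2^2+\mu_3^2+3\mu_1\mu_2\mu_3$ over the compact set $\{\mu_1+\mu_2+\mu_3=0,\ \mu_i\geq-2\}$, checking both the interior Lagrange critical points and the faces $\mu_i=-2$ --- shows that the minimum equals $0$ and is attained only at $\mu=(0,0,0)$ and at permutations of $\mu=(-2,1,1)$. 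In particular, wherever equality holds one has $|\mathring{Ric}|^2\in\{0,6\}$.

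Given this, the remaining steps are quick. Integrate~\eqref{eqBochner} over $M$; since $V$ vanishes on $\partial M$, the divergence theorem makes the left-hand side vanish, leaving
\[
 0 = \int_M\Big(|\nabla\mathring{Ric}|^2+\tfrac12|C|^2\Big)V\,dv_g \;+\; \int_M\Big(R\,|\mathring{Ric}|^2+18\det(\mathring{Ric})\Big)V\,dv_g .
\]
Both integrands are non-negative --- $V\geq0$, a sum of squares, and the pointwise inequality above --- so both integrals vanish, and therefore $R|\mathring{Ric}|^2+18\det(\mathring{Ric})\equiv0$ on the interior $\{V>0\}$. By the equality analysis, $|\mathring{Ric}|^2$ takes only the values $0$ and $6$ on a dense open subset of $M$, hence --- being continuous on the connected manifold $M$ --- it is identically $0$ or identically $6=R^2/6$. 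Either way $|\mathring{Ric}|^2\leq R^2/6$, so Theorem~A applies (to $M$ itself, or, in the non-orientable case, to the double cover of the first paragraph) and gives exactly the asserted dichotomy: $(M^3,g,V)$ is equivalent to the standard hemisphere when $\mathring{Ric}\equiv0$, and is covered by the standard cylinder when $|\mathring{Ric}|^2\equiv R^2/6$.

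The only substantial point --- and hence the main obstacle --- is the pointwise inequality together with the precise description of its equality locus: it is exactly the implication ``equality $\Rightarrow |\mathring{Ric}|^2\in\{0,6\}$'' that bridges the hypothesis $Ric\geq0$ to the hypothesis of Theorem~A, which cannot be invoked directly on $M$ (e.g.\ $Ric=\mathrm{diag}(6,0,0)$ has $|\mathring{Ric}|^2=24>R^2/6$). Everything else is the integration by parts already present in the proof of Theorem~A, together with that theorem's rigidity conclusion. (When the singular Einstein four-manifold associated to the triple in Section~6 is in fact smooth, Theorem~E is also a consequence of Gursky's theorem as in the remark following Theorem~C; the route above avoids that machinery.)
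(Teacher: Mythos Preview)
Your proof is correct and takes a genuinely different route from the paper's. The paper derives Theorem~E as a corollary of Theorem~C: since $\partial M$ is totally geodesic, the Gauss equation gives $K = 3 - Ric(N,N) \leq 3$ on each boundary component, whence by Gauss--Bonnet each component has area at least $4\pi/3$; after passing to the simply connected cover (Theorem~B(i)), this forces the strict inequality in alternative~(iii) of Theorem~C to fail, leaving only alternatives (i) or (ii). You instead bypass Theorem~C entirely---and with it the machinery of the associated singular Einstein four-manifold, the modified Yamabe problem, and the Gauss--Bonnet--Chern formula developed in Sections~6--9---by observing that $Ric\geq 0$ makes the \emph{algebraic} term $R|\mathring{Ric}|^2+18\det(\mathring{Ric})$ in the Bochner formula~\eqref{eqBochner} pointwise non-negative, so that the integration argument behind Theorem~A runs directly and yields $\nabla\mathring{Ric}=0$ together with $|\mathring{Ric}|^2\in\{0,R^2/6\}$. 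Your route is substantially more elementary and self-contained; the paper's route, on the other hand, illustrates that Theorem~C packages a boundary-area criterion applicable without any interior curvature sign hypothesis.
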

\indent In fact, in this case, one can see using the Gauss equation that each component of $\partial M$ is a totally geodesic two-sphere with Gaussian curvature $K \leq 3$, hence each component has area greater than or equal to $4\pi/3$. \\

\indent The paper is organized as follows. After reviewing some basic material on static triples (Section 2), we deduce some formulas for the Cotton tensor of such triples (Section 3) and prove the Bochner type formula (\ref{eqBochner}) (Section 4). Theorem A is proven in Section 5. In Section 6 we describe the associated singular Einstein manifold and briefly discuss some of its geometric and topological properties. In Section 7 we prove the topological classification Theorem B. The remaining two sections are devoted to the proof of Theorem C. The proof mimics the arguments of Gursky in \cite{Gur}, which involve obtaining the solution to a Yamabe type problem and the Gauss-Bonnet-Chern formula for closed oriented four-manifolds. Technical work is needed to justify why these arguments also work for the singular Einstein four-manifold associated to a compact three-dimensional static triple. The paper ends after three appendixes. The first one contains the proof of a key inequality used in Section 8. Then we classify certain solutions to an equation of type $Hess V = (\Delta V/2)g$ in a compact surface, a result needed in Section 7. The last appendix discusses the regularity properties of the solution to a degenerate elliptic problem that appeared in the proof of Proposition \ref{propfund}. \\

\noindent \textit{Acknowledgements:} I am grateful to Fernando Cod\'a Marques, Harold Rosenberg and Andr\'e Neves for their encouragement, for interesting conversations and for their kind interest in this work. I was supported by CNPq-Brazil.


\section{General properties of static triples}

\indent This section is intended to present some basic properties of static manifolds that will be frequently used latter (see also \cite{Cor}). \\
\indent Let $(M^{n},g)$ be a complete Riemannian manifold. A static potential is a non-trivial solution $V\in C^{\infty}(M)$ to the second order overdetermined elliptic equation
\begin{equation} \label{eqestatica}
  Hess_{g} V - \Delta_g V g - VRic_{g} = 0.
\end{equation}
\indent Equation (\ref{eqestatica}) is equivalent to a useful system of equations. Moreover, the set of zeroes of $V$ has a very special geometry.
\begin{lemm}[\cite{FisMar}, \cite{Bour}]
 The static equation (\ref{eqestatica}) is equivalent to the equations
 \begin{align}
  Hess_{g} V & = V(Ric_{g} - \Lambda g), \label{eqest2a} \\
  \Delta_{g} V + \Lambda V & = 0, \label{eqest2b}
 \end{align}
where the scalar curvature $R_g = (n-1)\Lambda$ is constant. Moreover, if $(M^n,g)$ admits a static potential $V\in C^{\infty}(M)$, then
 \begin{itemize}
  \item[$i)$] $0$ is a regular value of $V$;
  \item[$ii)$] $\{V=0\}$ is totally geodesic; and
  \item[$iii)$] $|\nabla V|$ is locally constant and positive on $\{V=0\}$.
 \end{itemize}
\end{lemm}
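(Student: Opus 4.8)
The plan is to handle the formal equivalence first, then the constancy of the scalar curvature, and finally the three geometric properties of $\{V=0\}$.

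\emph{The equivalence.} Tracing (\ref{eqestatica}) gives $(1-n)\Delta_g V = V R_g$, i.e. $\Delta_g V = -R_g V/(n-1)$; writing $\Lambda := R_g/(n-1)$, this is exactly (\ref{eqest2b}), and substituting $\Delta_g V\, g = -\Lambda V g$ back into (\ref{eqestatica}) yields $Hess_g V = V(Ric_g - \Lambda g)$, which is (\ref{eqest2a}). Conversely, (\ref{eqest2a}) and (\ref{eqest2b}) give at once
\begin{equation*}
 Hess_g V - \Delta_g V\, g - V Ric_g = V(Ric_g - \Lambda g) + \Lambda V g - V Ric_g = 0,
\end{equation*}
so (\ref{eqestatica}) holds. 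Thus the only substantive points left are that $R_g$ is constant and that items $i)$--$iii)$ hold.

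\emph{Constancy of $R_g$, and item $i)$.} I would differentiate (\ref{eqest2a}) once more and antisymmetrise in two indices, invoking the Ricci commutation identity for the third covariant derivatives of $V$, schematically $\nabla_k\nabla_i\nabla_j V - \nabla_i\nabla_k\nabla_j V = -R_{kij}{}^{l}\nabla_l V$. Expanding the left-hand side by (\ref{eqest2a}), contracting over the pair $(i,j)$, and inserting the contracted second Bianchi identity $div\, Ric_g = \frac{1}{2}\, dR_g$, one finds that the Ricci and curvature terms combine and cancel, leaving the identity $V\, dR_g = 0$. To turn this into $dR_g\equiv 0$ I would first record a unique-continuation fact: along any unit-speed geodesic $\gamma$, the function $f = V\circ\gamma$ satisfies, by (\ref{eqest2a}), the linear second-order ODE $f'' = \big(Ric_g(\gamma',\gamma') - \Lambda\big) f$; hence if $V$ and $dV$ vanished simultaneously at a point, then $f$ would vanish along every geodesic through it, so $V\equiv 0$ on a neighbourhood, and then on all of $M$ by a standard open-and-closed argument on the connected manifold $M$, contradicting the non-triviality of $V$. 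This already proves item $i)$ (namely $dV\neq 0$ wherever $V=0$, so $0$ is a regular value), and it shows $\{V=0\}$ has empty interior, whence $V\, dR_g = 0$ forces $dR_g\equiv 0$. The cancellation in this step is the only place an honest curvature computation is needed; everything else uses just (\ref{eqest2a}), a scalar ODE along geodesics, and first derivatives.

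\emph{Items $ii)$ and $iii)$.} By item $i)$, $\Sigma := \{V=0\}$ is a smooth regular hypersurface, and $\nabla V$ is a nowhere-vanishing field along it, normal to $\Sigma$ because $V$ is constant on $\Sigma$. For $X,Y$ tangent to $\Sigma$, on $\Sigma$ we have
\begin{equation*}
 X\big(|\nabla V|^2\big) = 2\, Hess_g V(\nabla V, X) = 2V\big(Ric_g(\nabla V, X) - \Lambda g(\nabla V, X)\big) = 0,
\end{equation*}
since $V = 0$ there, so $|\nabla V|$ is locally constant on $\Sigma$ and positive by item $i)$, which is item $iii)$. Finally, writing $N = \nabla V/|\nabla V|$ for the unit normal and noting that the term involving $X(|\nabla V|^{-1})\nabla V$ is orthogonal to $T\Sigma$, the second fundamental form of $\Sigma$ satisfies $g(\nabla_X N, Y) = |\nabla V|^{-1} Hess_g V(X,Y) = |\nabla V|^{-1} V\big(Ric_g(X,Y) - \Lambda g(X,Y)\big) = 0$ on $\Sigma$; hence $\Sigma$ is totally geodesic, which is item $ii)$.
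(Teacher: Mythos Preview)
Your proof is correct and follows essentially the same route as the paper's: trace to get the equivalence, derive $V\,dR_g=0$ via the Ricci commutation and contracted second Bianchi identities, use the geodesic ODE for unique continuation to get that $0$ is a regular value (and hence $R_g$ constant), and then read off items $ii)$ and $iii)$ from the vanishing of $Hess_g V$ on $\{V=0\}$. The only cosmetic difference is that the paper phrases the curvature computation as ``taking the divergence of (\ref{eqestatica})'' rather than ``differentiating (\ref{eqest2a}) and antisymmetrising'', and it invokes elliptic unique continuation rather than your open-and-closed argument; your version is slightly more explicit in the computations for $ii)$ and $iii)$, but the content is the same.
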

\begin{proof} (see \cite{Cor}) Taking the trace of (\ref{eqestatica}), we obtain
 \begin{equation} \label{eqcomp1}
  \Delta_{g} V + \frac{R_g}{n-1}V = 0.
 \end{equation}
 \indent The equivalence between (\ref{eqestatica}) and the system (\ref{eqest2a}), (\ref{eqest2b}) follows immediately. To prove that $R_g$ is constant, we observe that the divergence of $(\ref{eqestatica})$ gives 
 \begin{equation} \label{eqcomp2}
  V dR_g = 0,
 \end{equation}
 \noindent as can be seen by the Ricci formula for commuting derivatives and the contracted second Bianchi identity $2div_g Ric_g = dR_g$. Considering the homogeneous ODE satisfied by the restriction of $V$ to geodesics (which follows from (\ref{eqest2a})), we conclude that if $\nabla V(p)=0$ at $p\in\{V=0\}$, then $V$ must vanishes identically near $p$. By analytical continuation of solutions to the elliptic equation (\ref{eqest2a}), $V$ must vanish identically, a contradiction. Thus, $0$ is a regular value of $V$ and hence, by (\ref{eqcomp2}), $R$ must be constant. The remaining assertions follow because the second fundamental form of the level set $\{V=0\}$ and the derivative of $|\nabla V|^2$ on $\{V=0\}$ depends on $Hess V$, which is zero on this set by (\ref{eqest2a}).
\end{proof}
\indent The above proposition implies that static triples $(M^n,g,V)$ as defined in the Introduction have constant scalar curvature and totally geodesic boundary. When dealing with the classification problem, one can therefore assume $(M^n,g,V)$ has scalar curvature $R=\epsilon n(n-1)$ for some $\epsilon\in\{-1,0,1\}$. The constant value of $|\nabla V|$ on a connected component of $\partial M$ is sometimes called the \textit{surface gravity} of the component (as in \cite{GibHaw}). \\
\indent As a consequence of the Maximum Principle, the following result holds for compact static triples. 
\begin{prop}[\cite{FisMar}, \cite{Bour}] \label{propcompactas}
 Let $(M^n,g,V)$ be a static triple with scalar curvature $R=\epsilon n(n-1)$, $\epsilon\in\{-1,0,1\}$.
 \begin{itemize}
 \item[$i)$] If $M$ is closed, then $V$ is a positive constant and $(M,g)$ is Ricci-flat. 
 \item[$ii)$] If $M$ is compact and $\partial M$ is non-empty, then $R>0$ and $n$ is the first Dirichlet eigenvalue of the Laplacian of $(M,g)$.
 \end{itemize}
\end{prop}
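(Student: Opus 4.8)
The plan is to run everything off the equivalent system \eqref{eqest2a}--\eqref{eqest2b} from the previous Lemma, in particular the trace equation $\Delta_g V + \Lambda V = 0$ with $R = (n-1)\Lambda$, combined with the sign information built into the definition of a static triple: $V \geq 0$ and $V$ vanishes exactly on $\partial M$, so $V > 0$ on the interior $M \setminus \partial M$ (and everywhere when $\partial M = \emptyset$).

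For part $i)$, assume $M$ is closed, so $V > 0$ on all of $M$. Integrating $\Delta_g V = -\Lambda V$ over the closed manifold and using $\int_M \Delta_g V \, d\mathrm{vol}_g = 0$ gives $\Lambda \int_M V \, d\mathrm{vol}_g = 0$; since $\int_M V \, d\mathrm{vol}_g > 0$, this forces $\Lambda = 0$ (equivalently, for $\Lambda \neq 0$ the function $\Delta_g V$ would have a strict sign, contradicting that its integral vanishes). Hence $V$ is harmonic on a closed manifold, therefore a positive constant. Substituting $Hess_g V = 0$ and $\Lambda = 0$ into \eqref{eqest2a} yields $V \, Ric_g = 0$, and $V > 0$ gives $Ric_g \equiv 0$, i.e. $(M,g)$ is Ricci-flat (in particular $\epsilon = 0$).

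For part $ii)$, assume $M$ is compact with $\partial M \neq \emptyset$. Then $V$ is positive on the interior, vanishes on $\partial M$, and solves the Dirichlet eigenvalue problem $-\Delta_g V = \Lambda V$ in $M$, $V|_{\partial M} = 0$. I would then invoke the classical fact that a Dirichlet eigenfunction which does not change sign is necessarily a first eigenfunction: otherwise $V$ would be $L^2$-orthogonal to a positive first eigenfunction $\phi_1$, impossible since $V, \phi_1 > 0$ in the interior (this is also Courant's nodal domain theorem). Hence $\Lambda = \lambda_1(M,g) > 0$, using that the first Dirichlet eigenvalue of a compact manifold with non-empty boundary is strictly positive. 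Therefore $R = (n-1)\Lambda > 0$, and since $R = \epsilon n(n-1)$ with $\epsilon \in \{-1,0,1\}$ we must have $\epsilon = 1$, so $R = n(n-1)$ and $\Lambda = n$; thus $n = \lambda_1(M,g)$ is the first Dirichlet eigenvalue, as claimed.

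The whole argument is a short application of the maximum principle and elementary eigenvalue theory, so I do not expect a genuine obstacle. The only non-trivial input is the characterization of the first Dirichlet eigenvalue as the unique one carrying a nowhere-vanishing eigenfunction; the rest is direct substitution into \eqref{eqest2a}--\eqref{eqest2b}.
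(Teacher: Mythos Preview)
Your proof is correct and follows essentially the same route as the paper. The only cosmetic difference is in part $i)$: you integrate $\Delta_g V = -\Lambda V$ over the closed manifold to force $\Lambda = 0$, whereas the paper phrases this as an application of Hopf's Maximum Principle; for part $ii)$ both arguments identify $V$ as a sign-definite Dirichlet eigenfunction and conclude it realizes the first eigenvalue.
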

\begin{proof} 
If $M$ is closed, Hopf's Maximum Principle implies that $R$ is zero and $V$ is constant. Thus, the static equation (\ref{eqestatica}) becomes $Ric = 0$. If $M$ is compact and $\partial M\neq \emptyset$, $R/(n-1)$ is an eigenvalue of the Laplacian for the Dirichlet problem in $(M,g)$, by equation (\ref{eqest2a}). Hence, it is positive. Since $V$ does not change its sign, $R/(n-1)$ is the first eigenvalue.
\end{proof}
\indent Regarding the classification problem of static triples, item $ii)$ of Proposition \ref{propcompactas} has a useful consequence. In order to verify that two compact static triples with the same scalar curvature are equivalent it is enough to verify that they are isometric because, as first eigenfunctions, the static potentials must be proportional to each other in that case. \\
\indent Given a static triple $(M^n,g,V)$ with scalar curvature $R=\epsilon n(n-1)$, the function $|\nabla V|^2+\epsilon V^2$ plays an important role. If $(M^n,g)$ is Einstein, in which case the static equation reduces to Obata's equation $Hess V = -\epsilon V g$ \cite{Oba}, it is possible to verify that this function is constant. The following proposition shows the converse. More generally, we have
\begin{prop}\label{propmaxprin}
 Let $(M^{n},g,V)$ be a static triple with scalar curvature $R = \epsilon n(n-1)$, $\epsilon\in\{-1,0,1\}$. Then the function
 \begin{equation*}
  |\nabla V|^2+\epsilon V^2
 \end{equation*}
 does not achieve a non-negative maximum in $M\setminus \partial M$ unless it is constant and $(M^n,g)$ is Einstein.
\end{prop}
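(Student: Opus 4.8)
The plan is to show that $w := |\nabla V|^{2}+\epsilon V^{2}$ is a subsolution of a second order elliptic operator $\mathcal{L}$ having no zeroth order term on the open set $M\setminus\partial M=\{V>0\}$, and then to invoke the strong maximum principle. Write $\mathring{Ric}:=Ric_{g}-\tfrac{R}{n}g$ for the traceless Ricci tensor. Since $R=\epsilon n(n-1)$ one has $R/n=\epsilon(n-1)$ and $\Lambda=\epsilon n$, so equation (\ref{eqest2a}) reads $Hess_{g}V = V(Ric_{g}-\epsilon n\, g)$. Using this together with $Ric_{g}=\mathring{Ric}+\epsilon(n-1)g$, a direct computation gives
\begin{equation*}
 \nabla_{i}w = 2(Hess_{g}V)_{ij}\nabla^{j}V + 2\epsilon V\nabla_{i}V = 2V\,\mathring{Ric}_{ij}\nabla^{j}V,
\end{equation*}
the remaining lower order terms cancelling after using $R/n=\epsilon(n-1)$. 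In particular $\langle\nabla w,\nabla V\rangle = 2V\,\mathring{Ric}(\nabla V,\nabla V)$.

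Differentiating once more: taking the divergence of the identity above and using that $div_{g}\mathring{Ric}=0$ (contracted second Bianchi identity together with the constancy of $R$), that $tr_{g}\mathring{Ric}=0$, and once again the static equation to substitute for $Hess_{g}V$, one obtains
\begin{equation*}
 \Delta_{g}w = 2\,\mathring{Ric}(\nabla V,\nabla V) + 2V^{2}|\mathring{Ric}|^{2}.
\end{equation*}
Eliminating $\mathring{Ric}(\nabla V,\nabla V)$ between the last two identities, on $M\setminus\partial M$ (where $V>0$) we get
\begin{equation*}
 \Delta_{g}w - \frac{1}{V}\langle\nabla V,\nabla w\rangle = 2V^{2}|\mathring{Ric}|^{2} \ \geq\ 0.
\end{equation*}
Hence $w$ is a subsolution of $\mathcal{L}:=\Delta_{g}-V^{-1}\langle\nabla V,\,\cdot\,\rangle$, an elliptic operator with smooth coefficients and no zeroth order term on the connected open set $M\setminus\partial M$. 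By E.\ Hopf's strong maximum principle, if $w$ attains a local maximum at an interior point, then $w$ is constant on $M\setminus\partial M$, hence on $M$ by continuity; plugging this back into the previous display forces $V^{2}|\mathring{Ric}|^{2}\equiv 0$ on $M\setminus\partial M$, so $\mathring{Ric}\equiv 0$, i.e.\ $(M^{n},g)$ is Einstein. (Conversely, when $(M^{n},g)$ is Einstein the right-hand side of $\nabla w = 2V\,\mathring{Ric}(\nabla V,\cdot)$ vanishes identically, so $w$ is then automatically constant, consistently with the statement.)

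The only genuinely computational step is the Laplacian formula for $w$, where one must carefully contract the static equation with $\mathring{Ric}$ and apply the second Bianchi identity; everything else is formal. A minor point is that $\mathcal{L}$ involves division by $V$, so its coefficients degenerate along $\partial M$; this is harmless, since the strong maximum principle is used only at an interior point of $M\setminus\partial M=\{V>0\}$. For the same reason, the sign of the maximum value of $w$ plays no role in the argument above.
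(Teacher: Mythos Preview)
Your proof is correct and follows essentially the same route as the paper: the identity $\Delta_{g}w-\tfrac{1}{V}\langle\nabla V,\nabla w\rangle=2V^{2}|\mathring{Ric}|^{2}$ is precisely Shen's identity $div\!\left(\tfrac{1}{V}\nabla w\right)=2V|\mathring{Ric}|^{2}$ after multiplying through by $V$, and the paper likewise concludes by Hopf's strong maximum principle. Your direct derivation via $\nabla w=2V\,\mathring{Ric}(\nabla V,\cdot)$ is a clean alternative to the paper's appeal to the Bochner formula, and your closing remark that the sign of the maximum is irrelevant (since $\mathcal{L}$ has no zeroth order term) is a valid observation about the stated hypothesis.
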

\begin{proof}
 Using the Bochner formula, one proves Shen's identity \cite{Shen},
 \begin{equation} \label{shenform}
  div \left(\frac{1}{V}d\left(|\nabla V|^2+\epsilon V^2\right)\right) = 2V|\mathring{Ric}|^2,
 \end{equation}
 \noindent and applies Hopf's Maximum Principle.
\end{proof}
\indent Integrating (\ref{shenform}) on a compact three-dimensional static triple, one obtains a fundamental formula that can be used to prove Theorem \ref{thm4pi}.
\begin{prop}[\cite{Shen}] \label{propshenform}
 Let $(M^3,g,V)$ be a compact oriented static triple with scalar curvature $R=6$. Denoting by $\partial_1 M, \ldots, \partial_r M$  the connected components of $\partial M$ and by $k_{i}$ the value of $|\nabla V|$ on $\partial_{i} M$, the following formula holds:
\begin{equation} \label{eqgaussbonnetchern1}
 \sum_{i=1}^{r} k_{i}|\partial_i M| + \int_{M} |\mathring{Ric}|^2 Vd\mu = 2\pi\sum_{i=1}^{r}k_i\chi(\partial_{i} M),
\end{equation}
\noindent where $\chi(\partial_i M)$ denotes the Euler characteristic of $\partial_i M$.
\end{prop}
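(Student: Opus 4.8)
The plan is to integrate Shen's identity $(\ref{shenform})$ over $M$ and to recognise the boundary term it produces as the right-hand side of $(\ref{eqgaussbonnetchern1})$. Since $R=6$ we take $\epsilon=1$, so that $\Lambda=3$ and $(\ref{shenform})$ reads $div(V^{-1}d(|\nabla V|^2+V^2))=2V|\mathring{Ric}|^2$. The first point to settle is that the vector field $X:=V^{-1}\nabla(|\nabla V|^2+V^2)$, although it looks singular on $\partial M=\{V=0\}$, actually extends smoothly across $\partial M$. Indeed, differentiating $|\nabla V|^2$ and using the static equation in the form $Hess_g V=V(Ric_g-\Lambda g)$ from $(\ref{eqest2a})$ gives $\nabla|\nabla V|^2=2V(Ric_g(\nabla V)-\Lambda\nabla V)$, whence
\begin{equation*}
 X = 2\,Ric_g(\nabla V) - 2(\Lambda-1)\nabla V = 2\,Ric_g(\nabla V) - 4\nabla V,
\end{equation*}
which is smooth on all of $M$. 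The divergence theorem therefore applies directly on $M$ and yields
\begin{equation*}
 \int_{\partial M} \langle X,\nu\rangle\, dA = \int_M 2V|\mathring{Ric}|^2\, d\mu,
\end{equation*}
where $\nu$ denotes the outward unit normal along $\partial M$.

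Next I would evaluate the boundary integrand on a component $\partial_i M$. There $V=0$ and $|\nabla V|\equiv k_i$; since $0$ is a regular value of $V$, the gradient $\nabla V$ is normal to $\partial_i M$, and as $V\geq 0$ in the interior it points inward, so $\nabla V=-k_i\nu$ on $\partial_i M$. Substituting into the formula for $X$ gives $\langle X,\nu\rangle=-2k_i\,Ric_g(\nu,\nu)+2(\Lambda-1)k_i=-2k_i(Ric_g(\nu,\nu)-2)$. To relate $Ric_g(\nu,\nu)$ to the intrinsic geometry of the surface, recall that $\partial_i M$ is totally geodesic, so by the Gauss equation its Gaussian curvature $K_{\partial_i M}$ coincides with the sectional curvature of $(M^3,g)$ along $T\partial_i M$; since in dimension three the scalar curvature decomposes as $R=2(K_{\partial_i M}+Ric_g(\nu,\nu))$ with respect to an orthonormal frame adapted to $\partial_i M$, we obtain $Ric_g(\nu,\nu)=3-K_{\partial_i M}$. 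Hence $\langle X,\nu\rangle=2k_i K_{\partial_i M}-2k_i$ along $\partial_i M$.

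It then remains to integrate. Because $M$ is compact, each $\partial_i M$ is a closed surface, so Gauss--Bonnet gives $\int_{\partial_i M}K_{\partial_i M}\,dA=2\pi\chi(\partial_i M)$, and consequently
\begin{equation*}
 \int_M 2V|\mathring{Ric}|^2\, d\mu = \sum_{i=1}^r \int_{\partial_i M}(2k_i K_{\partial_i M}-2k_i)\, dA = \sum_{i=1}^r(4\pi k_i\chi(\partial_i M) - 2k_i|\partial_i M|).
\end{equation*}
Dividing by $2$ and rearranging produces precisely $(\ref{eqgaussbonnetchern1})$. I expect the only genuinely delicate step to be the verification that $X$ extends smoothly across $\partial M$ — this is what lets one use the divergence theorem on $M$ directly, rather than on the exhaustion $\{V\geq\delta\}$ followed by a limiting flux computation — together with keeping the signs and the Gauss-equation identity straight; everything else is a routine calculation.
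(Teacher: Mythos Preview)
Your proof is correct and follows essentially the same approach as the paper: both compute $X=2\,Ric_g(\nabla V)-4\nabla V$ from the static equation to see that the apparently singular vector field extends smoothly, integrate Shen's identity, identify $Ric_g(\nu,\nu)=3-K$ via the Gauss equation for the totally geodesic boundary, and finish with Gauss--Bonnet. The only difference is presentational: you make the smooth extension of $X$ explicit as the justification for applying the divergence theorem directly, whereas the paper simply writes down the smooth expression for $X$ without comment.
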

\begin{proof}
\indent By Shen's identity (\ref{shenform}), the vector field
\begin{equation*}
 X=\frac{1}{V}\nabla(|\nabla V|^2+V^2)
\end{equation*} 
is such that $div X = 2V|\mathring{Ric}|^2$ on $M$. Moreover, as can be seen using the static equation (\ref{eqest2a}), $X = 2(Ric(\nabla V,-) - 2\nabla V)$. The Gauss equation for the totally geodesic boundary $\partial M$ gives
\begin{equation*}
 K = \frac{R}{2} -Ric\left(\frac{\nabla V}{|\nabla V|},\frac{\nabla V}{|\nabla V|}\right) = 3-Ric\left(\frac{\nabla V}{|\nabla V|},\frac{\nabla V}{|\nabla V|}\right),
\end{equation*}
\noindent where $K$ denotes the Gaussian curvature of $\partial M$. Observing that the outward-pointing normal to $\partial M$ is $-\nabla V/|\nabla V|$, we integrate (\ref{shenform}) to obtain
\begin{multline*}
 \int_{M}V|\mathring{Ric}|^2d\mu = \frac{1}{2}\int_{M}div X d\mu =\frac{1}{2}\int_{\partial M} g\left(X,-\frac{\nabla V}{|\nabla V|}\right)d\sigma= \\ =\int_{\partial M} (2- Ric\left(\frac{\nabla V}{|\nabla V|},\frac{\nabla V}{|\nabla V|}\right))|\nabla V|d\sigma = \int_{\partial M} (K-1)|\nabla V| d\sigma.
\end{multline*}
\indent Formula (\ref{eqgaussbonnetchern1}) follows now by the Gauss-Bonnet theorem.
\end{proof}
\indent In order to understand the behaviour of the static potential near $\partial M$, we compute the expansions of $V$ and $|\nabla V|$ along a normalized geodesic issuing from $\partial M$ orthogonally.
\begin{prop} \label{propexpansao}
 Let $(M^n,g,V)$ be a static triple with scalar curvature $R=\epsilon n(n-1)$, $\epsilon\in\{-1,0,1\}$. Given $p\in \partial M$, let $\gamma:[0,\epsilon)\rightarrow M$ be the normalized geodesic such that $\gamma(0)=p$ and $\gamma'(0)$ is the unit normal to $\partial M$ pointing inside $M$. Then the following expansions hold as $s$ goes to zero:
 \begin{align}
  V^2\circ\gamma(s) = |\nabla V|^2(p) (s^2 + \frac{1}{3}(Ric(\gamma'(0),\gamma'(0)) - \epsilon n)s^4 + O(s^{6})), \\
  |\nabla V|^2\circ\gamma(s) = |\nabla V|^{2}(p)( 1 + (Ric(\gamma'(0),\gamma'(0)) - \epsilon n)s^2 + O(s^4)).
 \end{align}
\end{prop}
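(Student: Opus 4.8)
The plan is to study the restrictions $f(s):=V(\gamma(s))$ and $\phi(s):=|\nabla V|^2(\gamma(s))$ and to compute their Taylor coefficients at $s=0$ from the static equation. Since $\gamma(0)=p\in\partial M$ we have $f(0)=0$; since $V\geq 0$ vanishes on $\partial M$ and $\gamma'(0)$ points into $M$ we have $f'(0)=dV_p(\gamma'(0))\geq 0$, while $\nabla V(p)$ is orthogonal to $T_p\partial M$ with $|\nabla V(p)|=k>0$ by the first Lemma, so $\nabla V(p)=k\,\gamma'(0)$ and $f'(0)=k:=|\nabla V|(p)$. Equation (\ref{eqest2a}) is $Hess_g V=V(Ric_g-\epsilon n\,g)$ (here $\Lambda=\epsilon n$, since $R=\epsilon n(n-1)$), and because $\gamma$ is a geodesic this gives the scalar ODE
\begin{equation*}
 f''(s)=a(s)\,f(s),\qquad a(s):=Ric(\gamma'(s),\gamma'(s))-\epsilon n .
\end{equation*}
From $f(0)=0$ one reads off $f''(0)=0$, $f'''(0)=a(0)k$, and $f^{(4)}(0)=2a'(0)k$, where $a'(0)=(\nabla_{\gamma'(0)}Ric)(\gamma'(0),\gamma'(0))$.

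The crucial step is to show $a'(0)=0$ at every $p\in\partial M$, i.e. $(\nabla_\nu Ric)(\nu,\nu)=0$ on $\partial M$, with $\nu=\gamma'(0)$. I would deduce this from two facts established earlier: the scalar curvature is constant, so the contracted second Bianchi identity gives $div_g Ric=0$; and $\partial M$ is totally geodesic, so the Codazzi equation forces $R(X,Y,Z,\nu)=0$ whenever $X,Y,Z$ are tangent to $\partial M$. Contracting the latter shows $Ric(X,\nu)=0$ along $\partial M$ for every $X$ tangent to $\partial M$. Differentiating this identity in a tangential direction $e_\alpha$ and using again that $\partial M$ is totally geodesic (covariant derivatives of tangential fields in tangential directions stay tangential, and $\nabla_{e_\alpha}\nu=0$ on $\partial M$) gives $(\nabla_{e_\alpha}Ric)(e_\alpha,\nu)=0$ at $p$ for each $\alpha$. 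Substituting into $0=(div_g Ric)(\nu)=(\nabla_\nu Ric)(\nu,\nu)+\sum_\alpha(\nabla_{e_\alpha}Ric)(e_\alpha,\nu)$ yields $a'(0)=0$.

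Granting this, $f^{(4)}(0)=0$, hence $f(s)=ks+\tfrac{a(0)k}{6}s^3+O(s^5)$ and therefore $f(s)^2=k^2s^2+\tfrac{a(0)k^2}{3}s^4+O(s^6)$; since $a(0)=Ric(\gamma'(0),\gamma'(0))-\epsilon n$ and $k=|\nabla V|(p)$, this is the first expansion. For $|\nabla V|^2$ I would decompose, along $\gamma$, $\nabla V=f'(s)\,\gamma'(s)+W(s)$ with $W(s)\perp\gamma'(s)$; taking the component of $\nabla_{\gamma'}\nabla V=V\big(Ric(\gamma',\cdot)^\sharp-\epsilon n\,\gamma'\big)$ orthogonal to $\gamma'$ shows $W$ solves a linear equation $\nabla_{\gamma'}W=f(s)\,P(s)$ with $P$ bounded and $W(0)=0$, so $W(s)=O(s^2)$ and $|W(s)|^2=O(s^4)$. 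Consequently $\phi(s)=f'(s)^2+|W(s)|^2=f'(s)^2+O(s^4)$, and inserting $f'(s)=k+\tfrac{a(0)k}{2}s^2+O(s^4)$ gives $\phi(s)=k^2+a(0)k^2s^2+O(s^4)$, the second expansion. The only genuinely non-routine ingredient is the vanishing of $(\nabla_\nu Ric)(\nu,\nu)$ on $\partial M$; once that is in hand, the rest is bookkeeping with the static ODE.
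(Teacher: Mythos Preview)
Your proof is correct and follows essentially the same strategy as the paper's: both reduce the problem to computing successive derivatives of $V$ along $\gamma$ via the static equation, and both identify as the one nontrivial input the vanishing of $(\nabla_\nu Ric)(\nu,\nu)$ on $\partial M$, proved in each case from $div\,Ric=0$ together with the Codazzi equation for the totally geodesic boundary. The only cosmetic difference is packaging: the paper writes out $\nabla^k Hess\,V$ evaluated on $\nabla V$ at $p$, whereas you phrase things through the scalar ODE $f''=a\,f$ and, for $|\nabla V|^2$, give an explicit tangential/normal decomposition $\nabla V=f'\gamma'+W$ (the paper simply says ``direct computation''); your treatment is a bit more transparent at that point but is the same argument.
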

\begin{proof}
Without loss of generality, we can assume that $|\nabla V|(p)=1$, so that $\gamma'(0)=\nabla V(p)$. We need the formulas obtained by taking successive derivatives of the static equation (\ref{eqest2a}):
 \begin{align*}
  Hess V & = V(Ric - \epsilon ng), \\
  \nabla Hess V   & = (Ric - \epsilon n g)\oplus dV + V\nabla Ric, \\
  \nabla^{2} Hess V & = V(Ric -\epsilon ng)\oplus(Ric -\epsilon ng) + V\nabla^2 Ric + T,
 \end{align*}
 \noindent where $T$ depends on $\nabla Ric$ and $dV$. \\
 \indent For $p\in \partial M$, we claim that $\nabla Ric(\nabla V,\nabla V,\nabla V)(p)=0$. In fact, given an orthonormal basis $\{e_{1},\ldots,e_{n-1},\nabla V(p)\}$ of $T_{p}\partial M$, since $(M,g)$ has constant scalar curvature, the contracted second Bianchi inequality implies
 \begin{equation*}
  0 =  \frac{dR}{2} = div Ric (\nabla V) = \sum_{i=1}^{n-1}(\nabla_{e_i}Ric)(\nabla V,e_{i}) + (\nabla_{\nabla V} Ric)(\nabla V,\nabla V).
 \end{equation*}
 \indent On the other hand, since $\partial M$ is totally geodesic, Codazzi equation implies $Ric(\nabla V,e_{i})=0$ for all $i=1,\ldots,n-1$, and therefore $\sum(\nabla_{e_{i}}Ric)(\nabla V,e_{i}) = 0$. This proves the claim. \\
 \indent Hence, at the point $p\in\partial M$, 
 \begin{align*}
  Hess V(\nabla V,\nabla V)(p) & =  0, \\
  (\nabla Hess V)(\nabla V,\nabla V,\nabla V)(p)   & =  Ric(\nabla V,\nabla V)(p) -\epsilon n, \\
  (\nabla^{2} Hess V)(\nabla V,\nabla V,\nabla V,\nabla V)(p) & =  0.
 \end{align*}
 \indent The result follows now by a direct computation using the above formulas.
\end{proof}
\indent As a consequence of the Propositions \ref{propmaxprin} and \ref{propexpansao}, we obtain some control on the behaviour of the curvature of the boundary component where the function $|\nabla V|^2+\epsilon V^2$ attains a maximum.
\begin{prop} \label{propcurvbordo}
 Let $(M^n,g,V)$ be a static triple with scalar curvature $R^{M}=\epsilon n(n-1)$, $\epsilon\in\{-1,0,1\}$. If $|\nabla V|^2+ \epsilon V^2$ attains a maximum at a boundary component $\partial_1 M$, then $\partial_1 M$ has scalar curvature greater than or equal to $\epsilon (n-1)(n-2)$.
\end{prop}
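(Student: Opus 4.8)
The plan is to run a boundary version of the maximum principle: since the function $\varphi := |\nabla V|^2 + \epsilon V^2$ attains its maximum at $\partial_1 M$, the second-order term in its Taylor expansion along geodesics leaving $\partial_1 M$ must have a definite sign, and this sign controls $Ric(\nu,\nu)$ on $\partial_1 M$. Concretely, I would first note that on $\partial_1 M$ one has $V \equiv 0$, hence $\varphi \equiv |\nabla V|^2$, which by item $iii)$ of the Lemma is a positive constant, equal to $k_1^2$ say. By hypothesis this constant is the maximal value of $\varphi$ near $\partial_1 M$, so for every $p \in \partial_1 M$ the restriction of $\varphi$ to the inward unit-speed normal geodesic $\gamma$ from $p$ has a maximum at $s = 0$.

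Next I would combine the two expansions of Proposition \ref{propexpansao}. Adding $\epsilon$ times the expansion of $V^2 \circ \gamma$ to that of $|\nabla V|^2 \circ \gamma$, the constants $-\epsilon n$ and $+\epsilon$ merge to $-\epsilon(n-1)$, giving
\[
 \varphi(\gamma(s)) \;=\; |\nabla V|^2(p)\Bigl(1 + \bigl(Ric(\gamma'(0),\gamma'(0)) - \epsilon(n-1)\bigr)s^2 + O(s^4)\Bigr).
\]
Since $|\nabla V|^2(p) > 0$ and $s = 0$ is a maximum of $\varphi \circ \gamma$, the coefficient of $s^2$ cannot be positive, i.e.\ $Ric(\nu,\nu) \le \epsilon(n-1)$ at each point of $\partial_1 M$, with $\nu = \gamma'(0)$ the unit normal.

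To finish, I would invoke the Gauss equation for the boundary, which is totally geodesic by item $ii)$ of the Lemma; just as in the proof of Proposition \ref{propshenform}, this reads $R^{\partial_1 M} = R^M - 2 Ric(\nu,\nu)$. Substituting $R^M = \epsilon n(n-1)$ and the pointwise bound above yields
\[
 R^{\partial_1 M} \;=\; \epsilon n(n-1) - 2 Ric(\nu,\nu) \;\ge\; \epsilon n(n-1) - 2\epsilon(n-1) \;=\; \epsilon(n-1)(n-2),
\]
as desired. I do not expect a genuine obstacle: Proposition \ref{propexpansao} already carries out the differential-geometric computation, so the only delicate points are the algebra in the expansion of $\varphi$ and the observation that the maximum hypothesis yields a one-sided (inward) constraint on the $s^2$-coefficient (when $\varphi$ is instead constant one is in the Einstein case of Proposition \ref{propmaxprin}, the coefficient vanishes, and equality holds).
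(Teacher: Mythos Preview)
Your proposal is correct and follows essentially the same argument as the paper: expand $|\nabla V|^2 + \epsilon V^2$ along the inward normal geodesic via Proposition~\ref{propexpansao}, read off $Ric(\nu,\nu)\le \epsilon(n-1)$ from the sign of the $s^2$-coefficient, and conclude with the Gauss equation for the totally geodesic boundary. The algebra and logic match the paper's proof line by line.
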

\begin{proof}
 Since $|\nabla V|$ is constant on each connected component of $\partial M=V^{-1}(0)$, the maximum of $|\nabla V|^2+ \epsilon V^2$ is attained at every point of $\partial_1 M$. Let $\gamma$ be a geodesic issuing from $p\in\partial_1 M$ as in Proposition \ref{propexpansao}. Then $|\nabla V|^2(p) \geq (|\nabla V|^2 + \epsilon V^2)\circ \gamma(s) = |\nabla V|^2(p)(1 + (Ric(\gamma'(0),\gamma'(0)) - \epsilon (n-1))s^2 + O(s^4))$ implies that $Ric(\gamma'(0),\gamma'(0)) \leq \epsilon (n-1)$. Since $\partial_1 M$ is totally geodesic, the result follows from the Gauss equation $R^{\partial_{1} M} = R^{M} - 2Ric(\gamma'(0),\gamma'(0))$.
\end{proof}
\indent We finish this section with a result about the Jacobi operator of a minimal hypersurface $\Sigma$ in $(M^n,g,V)$. We denote by $N$ a unit vector field normal to $\Sigma$ and by $A$ the second fundamental form of $\Sigma$ given by $A=g(\nabla_{X}N,Y)$ for all $X,Y$ tangent to $\Sigma$.
\begin{prop} \label{propjacobi}
 Let $(M^n,g,V)$ be a static triple. Let $\Sigma^{n-1}$ be an immersed compact hypersurface in $M$. Then the Jacobi operator of $\Sigma$ is such that
 \begin{equation*}
  \mathcal{L}_{\Sigma} V := \Delta_{\Sigma} V + (Ric(N,N) + |A|^2)V = -g(\nabla V,\vec{H}) + |A|^2V.
 \end{equation*}
\end{prop}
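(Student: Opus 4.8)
The plan is to compute the Jacobi operator $\mathcal{L}_\Sigma V$ directly by relating the Laplacian of $V$ on $\Sigma$ to the Hessian of $V$ on $M$, and then substituting the static equation. Writing $\Delta_\Sigma V = \sum_{i=1}^{n-1} \mathrm{Hess}_g V(e_i,e_i) - g(\nabla V, \vec{H})$ for a local orthonormal frame $\{e_i\}$ tangent to $\Sigma$ (where $\vec{H} = H N$ with $H$ the mean curvature of $\Sigma$, and the sign convention is fixed by the definition $A = g(\nabla_X N, Y)$ given just above), the key is to use that
\begin{equation*}
 \sum_{i=1}^{n-1} \mathrm{Hess}_g V(e_i,e_i) = \Delta_g V - \mathrm{Hess}_g V(N,N).
\end{equation*}
Then I would substitute the two forms of the static equation. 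First, from $\mathrm{Hess}_g V = V(\mathrm{Ric}_g - \Lambda g)$ (equation (\ref{eqest2a}) with $\Lambda = R/(n-1)$), one gets $\mathrm{Hess}_g V(N,N) = V(\mathrm{Ric}(N,N) - \Lambda)$. Combined with $\Delta_g V = -\Lambda V$ from (\ref{eqest2b}), this yields
\begin{equation*}
 \sum_{i=1}^{n-1} \mathrm{Hess}_g V(e_i,e_i) = -\Lambda V - V\mathrm{Ric}(N,N) + \Lambda V = -V\,\mathrm{Ric}(N,N).
\end{equation*}

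Hence $\Delta_\Sigma V = -V\,\mathrm{Ric}(N,N) - g(\nabla V, \vec{H})$. Adding $(\mathrm{Ric}(N,N) + |A|^2)V$ to both sides gives exactly
\begin{equation*}
 \mathcal{L}_\Sigma V = \Delta_\Sigma V + (\mathrm{Ric}(N,N) + |A|^2)V = -g(\nabla V, \vec{H}) + |A|^2 V,
\end{equation*}
which is the claimed identity. Note that the terms involving $\mathrm{Ric}(N,N)$ cancel, so the final formula does not see the ambient Ricci curvature in the normal direction — only the mean curvature vector and the norm of the second fundamental form survive, together with the static potential.

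There is essentially no serious obstacle here; the computation is routine once one recalls the standard decomposition of the ambient Hessian into its tangential trace along $\Sigma$, its normal-normal component, and the mean-curvature correction term. The only points requiring mild care are bookkeeping of sign conventions — in particular, that $g(\nabla V, \vec{H})$ is the correct contraction appearing in $\Delta_\Sigma V = \Delta_g V|_\Sigma - \mathrm{Hess}_g V(N,N) - g(\nabla V, \vec H)$ with the chosen orientation of $N$ — and correctly identifying $\Lambda = R/(n-1)$ so that the two uses of the static equation are consistent. No compactness or minimality of $\Sigma$ is actually used in the derivation; the hypotheses in the statement are there only because $\mathcal{L}_\Sigma$ is most naturally discussed for compact minimal $\Sigma$ in the later applications (where $\vec H = 0$ and the formula simplifies to $\mathcal{L}_\Sigma V = |A|^2 V$).
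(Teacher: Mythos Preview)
Your proof is correct and follows essentially the same route as the paper's: both use the standard decomposition $\Delta_{M} f = \Delta_{\Sigma} f + g(\nabla f,\vec{H}) + Hess\,f(N,N)$ together with the static equation to see that the $Ric(N,N)$ terms cancel. The only cosmetic difference is that you invoke the pair (\ref{eqest2a})--(\ref{eqest2b}) to compute $\Delta_g V - Hess_g V(N,N) = -V\,Ric(N,N)$, whereas the paper reads this off directly from the single equation (\ref{eqestatica}) evaluated at $(N,N)$.
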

\begin{proof}
 Since $\Delta_{M} f = \Delta_{\Sigma} f + g(\nabla f, \vec{H}) + Hess f(N,N)$ for every function $f\in C^{\infty}(M)$, the formula for $\mathcal{L}_{\Sigma} V$ follows from the static equation (\ref{eqestatica}).
\end{proof}

\section{The Cotton tensor of a three-dimensional static triple}

\indent We deduce some identities satisfied by the Cotton tensor of three-di\-men\-sio\-nal static triples. They will be used in the proof of the Bochner type formula (\ref{eqBochner}) in the next section. It may be interesting to compare the computations carried out in this section and in the following one with the more general computations related to conformal flatness of \cite{QinYua}. \\ 
\indent Let $(M^3,g)$ be a Riemannian three-manifold. The Riemann curvature tensor is completely determined by the Ricci tensor. In fact,
\begin{equation} \label{eqriem3}
 Rm = \left(Ric - \frac{R}{4}g\right)\odot g = \left(\mathring{Ric} + \frac{R}{12}g\right)\odot g,
\end{equation}
\noindent where $A\odot B$ denotes the Kulkarni-Nomizu product of symmetric two-tensors given in coordinates by $(A\odot B)_{ijkl} = A_{ik}B_{jl} - A_{il}B_{jk} - A_{jk}B_{il} + A_{jl}B_{ik}$ (see \cite{Bes}). \\
\indent The Cotton tensor of $(M^3,g)$ is defined by
\begin{multline} \label{defcott}
  \nonumber C(X,Y,Z) = (\nabla_{Z}Ric)(X,Y) - (\nabla_{Y}Ric)(X,Z) \\ 
                        - \frac{1}{4}(dR(Z)g(X,Y) - dR(Y)g(X,Z))
\end{multline}
\noindent for all $X, Y, Z\in \mathcal{X}(M)$. If $V\in C^{\infty}(M)$ is a static potential in $(M^3,g)$, the Cotton tensor has a particularly simple expression.
\begin{lemm} \label{lemt}
 Assume $(M^3,g)$ admits a static potential $V\in C^{\infty}(M)$. Let
 \begin{equation*} 
  \mathring{Hess V} = Hess V - \frac{\Delta V}{3} g
 \end{equation*}
 \noindent denote the traceless part of the Hessian of $V$ and let $T$ be the three-tensor defined by
 \begin{equation*}
  T(X,Y,Z) = (\nabla_{Z} \mathring{Hess V})(X,Y) - (\nabla_{Y} \mathring{Hess V})(X,Z)
 \end{equation*}
 \noindent for all $X, Y, Z\in \mathcal{X}(M)$. Then
 \begin{multline}
   T(X,Y,Z)  = - (\mathring{Ric}(X,Y)dV(Z) - \mathring{Ric}(X,Z)dV(Y)) \\
                 - (g(X,Y)\mathring{Ric}(Z,\nabla V) - g(X,Z)\mathring{Ric}(Y,\nabla V)), \label{eqt}
 \end{multline}
 \begin{align}
   \nonumber       C(X,Y,Z) & = (\nabla_{Z})(\mathring{Ric})(X,Y) - (\nabla_{Y})(\mathring{Ric})(X,Z) \\
                  & = \frac{1}{V}\left(T(X,Y,Z) - (\mathring{Ric}(X,Y)dV(Z) - \mathring{Ric}(X,Z)dV(Y)) \right). \label{eqcott}
 \end{align}
\end{lemm}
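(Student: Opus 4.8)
The plan is to derive both identities directly from the static equation~(\ref{eqest2a}), which already expresses $Hess\, V$ in terms of the Ricci tensor, by differentiating once and antisymmetrizing. The key observation is that $\mathring{Hess V}$ and $V\mathring{Ric}$ differ only by a multiple of $g$: from~(\ref{eqest2a}) one has $Hess\, V = V(Ric - \Lambda g)$ with $\Lambda = R/(n-1) = R/2$ in dimension three, so $\mathring{Hess V} = V\mathring{Ric}$ exactly (the $\Lambda g$ and the trace terms are pure-trace and drop out). Then $T$, being an antisymmetrized covariant derivative of $\mathring{Hess V} = V\mathring{Ric}$, expands by the Leibniz rule into a term $V$ times the antisymmetrized derivative of $\mathring{Ric}$ plus a term where the derivative lands on $V$. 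The first piece is precisely $V$ times the Cotton tensor written in the traceless form $(\nabla_Z\mathring{Ric})(X,Y) - (\nabla_Y\mathring{Ric})(X,Z)$, which one checks agrees with the original definition~(\ref{defcott}) because $dR = 0$ (constant scalar curvature, from the Lemma in Section~2). The second piece is exactly $\mathring{Ric}(X,Y)dV(Z) - \mathring{Ric}(X,Z)dV(Y)$. Rearranging gives~(\ref{eqcott}) immediately, once~(\ref{eqt}) is established.

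So the real content is formula~(\ref{eqt}): an independent expression for $T$ that does \emph{not} involve $\mathring{Ric}$ with a factor of $V$ on the right. First I would write $\mathring{Hess V}(X,Y) = V\mathring{Ric}(X,Y)$ and compute $\nabla_Z(\mathring{Hess V})(X,Y) = dV(Z)\mathring{Ric}(X,Y) + V(\nabla_Z\mathring{Ric})(X,Y)$, so that
\begin{equation*}
T(X,Y,Z) = \big(dV(Z)\mathring{Ric}(X,Y) - dV(Y)\mathring{Ric}(X,Z)\big) + V\big((\nabla_Z\mathring{Ric})(X,Y) - (\nabla_Y\mathring{Ric})(X,Z)\big).
\end{equation*}
This is self-referential, so to close the loop I would instead compute $T$ a second way, directly from the Ricci identity for commuting second covariant derivatives of the one-form $dV$. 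The antisymmetrized second derivative $\nabla^3 V$ (in the appropriate slots) equals a curvature term $Rm(\,\cdot\,)(\nabla V)$ by the Ricci identity; then I substitute the three-dimensional curvature decomposition~(\ref{eqriem3}), $Rm = (\mathring{Ric} + \tfrac{R}{12}g)\odot g$, and expand the Kulkarni--Nomizu product contracted against $\nabla V$. The pure-trace part $\tfrac{R}{12}g\odot g$ contributes only terms proportional to $g$ contracted with $dV$, which are absorbed when one passes from $Hess\, V$ to its traceless part $\mathring{Hess V}$; the $\mathring{Ric}\odot g$ part produces precisely the four terms on the right-hand side of~(\ref{eqt}). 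One must be careful to track which antisymmetrization is built into $T$ (it is antisymmetric in $Y,Z$) versus the antisymmetry that the Ricci identity naturally produces, but in three dimensions with $Rm$ written via~(\ref{eqriem3}) this bookkeeping is purely mechanical.

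I expect the main obstacle to be organizing the index gymnastics cleanly: one has to differentiate the tensor equation~(\ref{eqest2a}), keep separate the contribution of $d(\Delta V)$ and of $dR$ (both of which vanish or simplify — $dR = 0$ and $\Delta V = -\Lambda V$ by~(\ref{eqest2b}), so $d(\Delta V) = -\Lambda\, dV$), and then correctly expand the Kulkarni--Nomizu product $(A\odot g)_{ijkl}W^l$ for a one-form $W$. A clean way to avoid errors is to do the whole computation with $A\odot B$ in the coordinate form given after~(\ref{eqriem3}) and contract with $\nabla V$ at the end. Once~(\ref{eqt}) is in hand, substituting it back into the self-referential expression for $T$ above and solving for the antisymmetrized derivative of $\mathring{Ric}$ yields~(\ref{eqcott}), and the identification of that antisymmetrized derivative with the Cotton tensor~(\ref{defcott}) is immediate since $dR = 0$. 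Finally, I would double-check the identity~(\ref{eqt}) on a known example — say the Schwarzschild--de Sitter triple from Theorem~\ref{thmconfflat}(iii), where $\mathring{Ric}\neq 0$ but $C$ must vanish by local conformal flatness, which forces $T = \mathring{Ric}(X,Y)dV(Z) - \mathring{Ric}(X,Z)dV(Y)$, a good consistency test.
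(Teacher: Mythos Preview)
Your proposal is correct and follows essentially the same approach as the paper: compute $T$ via the Ricci identity for $V_{;ijk}-V_{;ikj}$, substitute the three-dimensional decomposition~(\ref{eqriem3}) to obtain~(\ref{eqt}), and then combine with the Leibniz expansion of $\nabla(\mathring{Hess V}) = \nabla(V\mathring{Ric})$ (together with $dR=0$) to read off~(\ref{eqcott}). The paper's write-up is simply a terse coordinate execution of exactly this plan, after normalizing $R=6\epsilon$ so that $V_{;ij}=(\mathring{Hess V})_{ij}-\epsilon V g_{ij}$.
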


\begin{proof}
 Without loss of generality we assume that $R=6\epsilon$ for $\epsilon \in\{-1,0,1\}$. The static equations (\ref{eqest2a}), (\ref{eqest2b}) can be rewritten in coordinates as 
 \begin{align*}
  (\mathring{Hess V})_{ij} & =  V(\mathring{Ric})_{ij}, \\
  (Hess V)_{ij} = V_{;ij} & = (\mathring{Hess V})_{ij} - \epsilon Vg_{ij}.
 \end{align*} 
 \indent By the Ricci identity for commuting covariant derivatives and (\ref{eqriem3}), we obtain formula (\ref{eqt}):
\begin{align*}
            T_{ijk} & =  (V_{;ijk} + \epsilon V_{k}g_{ij}) - (V_{;ikj} + \epsilon V_{j}g_{kj}) = - R_{pikj}\tensor{V}{_;^p} + \epsilon (g_{ij}V_{;k} - g_{kj}V_{j}) \\
                    & =  - ((\mathring{Ric})_{ij}V_{;k} - (\mathring{Ric})_{ik}V_{;j}) - (g_{ij}(\mathring{Ric})_{kp}\tensor{V}{_;^p} - g_{ik}(\mathring{Ric})_{jp}\tensor{V}{_;^p}).
\end{align*}
 \indent On the other hand, $C_{ijk}= (\mathring{Ric})_{ij;k} - (\mathring{Ric})_{ik;j}$, because the scalar curvature is constant. Formula (\ref{eqcott}) follows since $\mathring{Hess V} =  V\mathring{Ric}$.
\end{proof}
\indent The norm of the Cotton tensor $C$ can be computed in different ways, each way allowing to deduce interesting consequences. For instance, on a regular level set $\Sigma=V^{-1}(c)$, $c>0$, $|C|$ depends on the norm of the traceless part of the second fundamental form of $\Sigma$ and on the gradient of $|\nabla V|^2$ in $\Sigma$ (see \cite{Lin}, Formula 14). That computation is useful to prove the classification results of \cite{Kob} and \cite{Laf}. In the next lemma we show other two formulas that will be used in our applications. 
\begin{lemm} \label{lemnormacott}
 Assume $(M^3,g)$ admits a static potential $V\in C^{\infty}(M)$. Then
 \begin{equation*}
   |C|^2 = \frac{1}{V^2}\left( 8|\mathring{Ric}|^2|\nabla V|^2 - 12|\mathring{Ric}(\nabla V,-)|^2\right) = - \frac{4}{V}C_{ijk}\tensor{(\mathring{Ric})}{^i^j}\tensor{V}{^k}.
 \end{equation*}
 \indent In particular, $C$ vanishes at a point $p\in M$ with $V(p)\neq 0$ if and only if $\nabla V(p) = 0$ or $\mathring{Ric}$ has at most two different eigenvalues at $p$.
\end{lemm}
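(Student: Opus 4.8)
\emph{Proof strategy.} The plan is to read $|C|^2$ directly off the explicit description of the Cotton tensor obtained in Lemma~\ref{lemt}. Work in a local orthonormal frame, abbreviate $S=\mathring{Ric}$ and write $W$ for $dV$ (so $W_i=V_{;i}$), and set $U_i=\sum_j S_{ij}W_j$, the components of $\mathring{Ric}(\nabla V,-)$, so that $|U|^2=|\mathring{Ric}(\nabla V,-)|^2$. Adding \eqref{eqt} and \eqref{eqcott} gives the compact form
\[
  V\,C_{ijk}=-2\bigl(S_{ij}W_k-S_{ik}W_j\bigr)-\bigl(\delta_{ij}U_k-\delta_{ik}U_j\bigr).
\]
Writing $V\,C_{ijk}=-2P_{ijk}-Q_{ijk}$ with $P_{ijk}=S_{ij}W_k-S_{ik}W_j$ and $Q_{ijk}=\delta_{ij}U_k-\delta_{ik}U_j$, one has $V^2|C|^2=4\sum P_{ijk}^2+4\sum P_{ijk}Q_{ijk}+\sum Q_{ijk}^2$, and each of these three sums is a one-line contraction that uses only that $S$ is symmetric and trace-free, the identity $\sum_i S_{ij}W_i=U_j$, and $n=3$ (so $\sum_i\delta_{ii}=3$): namely $\sum P_{ijk}^2=2|S|^2|W|^2-2|U|^2$, $\sum P_{ijk}Q_{ijk}=-2|U|^2$ and $\sum Q_{ijk}^2=4|U|^2$. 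Combining these with the stated coefficients yields $V^2|C|^2=8|S|^2|W|^2-12|U|^2$, which is the first formula. For the second formula I would contract $V\,C_{ijk}$ against $S_{ij}W_k$ in exactly the same way, obtaining $V\sum C_{ijk}S_{ij}W_k=-2|S|^2|W|^2+3|U|^2$, so that $-\tfrac{4}{V}\,C_{ijk}(\mathring{Ric})^{ij}V^k=\tfrac{1}{V^2}(8|S|^2|W|^2-12|U|^2)=|C|^2$.

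\emph{The last assertion.} Since $V(p)\neq0$, the first formula says $C(p)=0$ iff $2|\mathring{Ric}|^2|\nabla V|^2=3|\mathring{Ric}(\nabla V,-)|^2$ at $p$; if $\nabla V(p)=0$ this holds trivially (indeed $V\,C_{ijk}=0$ there, since $U=W=0$), so assume $\nabla V(p)\neq0$. The idea is to extract structure from the tensor equation $V\,C_{ijk}(p)=0$. Contracting it with $W_i$ collapses all the terms and leaves $U_jW_k=U_kW_j$, so $\mathring{Ric}(\nabla V,-)$ is proportional to $dV$ at $p$, i.e. $\mathring{Ric}(\nabla V)=\lambda\nabla V$ for some scalar $\lambda$. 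Substituting $U=\lambda W$ back in gives $(2S_{ij}+\lambda\delta_{ij})W_k=(2S_{ik}+\lambda\delta_{ik})W_j$ for all $i,j,k$; fixing $i$, this forces the vector with components $2S_{ij}+\lambda\delta_{ij}$ ($j=1,2,3$) to be proportional to $W$, and then symmetry of $S$ forces $2S_{ij}+\lambda\delta_{ij}=t\,W_iW_j$ for a single scalar $t$, whose value $t=3\lambda/|\nabla V|^2$ is fixed by taking the trace in dimension three. Hence, at $p$,
\[
  \mathring{Ric}=\frac{\lambda}{2|\nabla V|^2}\bigl(3\,dV\otimes dV-|\nabla V|^2\,g\bigr),
\]
a tensor with eigenvalues $\lambda,-\lambda/2,-\lambda/2$; in particular $\mathring{Ric}(p)$ has at most two distinct eigenvalues. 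Conversely, if $\mathring{Ric}(p)$ has at most two distinct eigenvalues, then either $\mathring{Ric}(p)=0$ (so $U=0$ and $C(p)=0$) or its eigenvalues are $\mu,\mu,-2\mu$; writing $\nabla V(p)$ in the eigenbasis and plugging into the norm formula one finds that $V^2|C|^2$ equals $36\mu^2(|\nabla V|^2-a^2)$ at $p$, where $a^2$ is the squared length of the component of $\nabla V(p)$ orthogonal to the $\mu$-eigenspace, so $C(p)=0$ precisely when $\nabla V(p)$ lies in the eigenline of $-2\mu$ (or $\mu=0$).

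\emph{Main obstacle.} The only real difficulty I anticipate is bookkeeping in the contraction that produces $|C|^2$: one must keep the two relevant traces straight, $\mathrm{tr}\,\mathring{Ric}=0$ versus $\mathrm{tr}\,g=3$, since it is exactly the ``$3$'' of dimension three that makes the coefficients $8$ and $12$ come out as stated. For the final assertion the substantive point is the rank-one peeling step --- from $B_{ij}W_k=B_{ik}W_j$ for all indices, with $B$ symmetric and $W\neq0$, conclude $B=t\,W\otimes W$ --- which pins $\mathring{Ric}(p)$ down up to the scalar $\lambda$, after which the eigenvalue statement is immediate. One should also note that, read literally, the converse requires $\nabla V(p)$ to lie in the eigenline of the simple eigenvalue of $\mathring{Ric}(p)$; the necessity direction, which is the one used in the applications, is exactly the computation sketched above.
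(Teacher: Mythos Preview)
Your computation of the two identities for $|C|^2$ is correct and is essentially the paper's own argument: the paper writes $VC_{ijk}=T_{ijk}+U_{ijk}$ with $U_{ijk}=-(S_{ij}W_k-S_{ik}W_j)$ and $T_{ijk}=U_{ijk}-(\delta_{ij}U_k-\delta_{ik}U_j)$, then expands $|T|^2+2\langle T,U\rangle+|U|^2$, and derives the second identity from $-VC_{ijk}S^{ij}W^k=\tfrac12(T+U)_{ijk}U^{ijk}$. Your grouping $VC=-2P-Q$ is a relabelling of the same decomposition, and your three one-line contractions match the paper's \eqref{eqoutrotermo}--\eqref{eqnormat}.

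For the final assertion you take a genuinely different route. The paper simply diagonalises $\mathring{Ric}$ at $p$, writes $\nabla V=\sum a_ie_i$, and uses $\lambda_1+\lambda_2+\lambda_3=0$ to obtain the closed formula
\[
  V^2|C|^2=4a_1^2(\lambda_1+2\lambda_2)^2+4a_2^2(\lambda_2+2\lambda_1)^2+4a_3^2(\lambda_1-\lambda_2)^2,
\]
from which the necessity direction is read off term by term. Your argument instead contracts $VC_{ijk}=0$ with $W_i$ to force $\mathring{Ric}(\nabla V)=\lambda\nabla V$, and then peels off a rank-one tensor to get $\mathring{Ric}=\tfrac{\lambda}{2|\nabla V|^2}(3\,dV\otimes dV-|\nabla V|^2 g)$ with eigenvalues $\lambda,-\lambda/2,-\lambda/2$. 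This is a nice coordinate-free alternative; it also makes transparent the extra information that $\nabla V$ must be the eigenvector for the simple eigenvalue, which the paper's formula encodes only implicitly.

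You are also right to flag the converse. As the paper's own eigenvalue formula shows, if $\mathring{Ric}(p)$ has eigenvalues $\mu,\mu,-2\mu$ with $\mu\neq 0$ then $V^2|C|^2(p)=36\mu^2(a_1^2+a_2^2)$, which vanishes only when $\nabla V(p)$ lies in the $(-2\mu)$-eigenline. So the ``if and only if'' in the lemma is not literally correct; only the implication $C(p)=0\Rightarrow(\nabla V(p)=0$ or at most two eigenvalues$)$ holds unconditionally, and that is indeed the only direction used later (in the equality case of \eqref{ineqauto} and in Theorem~A).
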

\begin{proof}
\indent By Lemma \ref{lemt}, $VC_{ijk} = T_{ijk} + U_{ijk}$, where
\begin{align*}
 T_{ijk} & = - ((\mathring{Ric})_{ij}V_{;k} - (\mathring{Ric})_{ik}V_{;j}) - (g_{ij}(\mathring{Ric})_{kp}\tensor{V}{_;^p} - g_{ik}(\mathring{Ric})_{jp}\tensor{V}{_;^p}),  \\
 U_{ijk} & = -((\mathring{Ric})_{ij}V_{;k} - (\mathring{Ric})_{ik}V_{;j}).
\end{align*}
\indent Thus, since
\begin{align}
 |U|^2 & = 2\tensor{(\mathring{Ric})}{_i_j}\tensor{(\mathring{Ric})}{^i^j}V_{;p}\tensor{V}{_;^p} - 2\tensor{(\mathring{Ric})}{_j_p}\tensor{V}{_;^p}\tensor{(\mathring{Ric})}{^j^q}\tensor{V}{_;_q}, \label{eqoutrotermo} \\
 T_{ijk}U^{ijk} & = 2\tensor{(\mathring{Ric})}{_i_j}\tensor{(\mathring{Ric})}{^i^j}V_{;p}\tensor{V}{_;^p} - 4\tensor{(\mathring{Ric})}{_j_p}\tensor{V}{_;^p}\tensor{(\mathring{Ric})}{^j^q}\tensor{V}{_;_q}, \label{eqtermomisto} \\
  |T|^2 & = 2\tensor{(\mathring{Ric})}{_i_j}\tensor{(\mathring{Ric})}{^i^j}V_{;p}\tensor{V}{_;^p} - 2\tensor{(\mathring{Ric})}{_j_p}\tensor{V}{_;^p}\tensor{(\mathring{Ric})}{^j^q}\tensor{V}{_;_q}, \label{eqnormat}
\end{align}
\noindent we obtain
\begin{equation*}
 V^2|C|^2 = |T|^2 + 2T_{ijk}U^{ijk} + |U|^2 = (8|\mathring{Ric}|^2 - 12|\mathring{Ric}(\nabla V,-)|^2).
\end{equation*}
\indent Moreover, since $C_{ijk}+C_{ikj}=0$, from (\ref{eqoutrotermo}) and (\ref{eqtermomisto}) we have
\begin{align*}
 - VC_{ijk}\tensor{(\mathring{Ric})}{^i^j}\tensor{V}{^k} & = \frac{V}{2}C_{ijk}U^{ijk}= \frac{1}{2}(T_{ijk} + U_{ijk})U^{ijk} \\ 
               & = (2|\mathring{Ric}||\nabla V|^2 - 3 |\mathring{Ric}(\nabla V,-)|^2) = \frac{V^2}{4}|C|^2.
\end{align*}
\indent To finish the proof of the lemma, given $p\in M$ with $V(p)\neq 0$, we choose $\{e_1,e_2,e_3\}\subset T_{p}M$ an orthonormal basis diagonalizing $\mathring{Ric}$. Writing $(\mathring{Ric})_{ij}=\lambda_{i}\delta_{ij}$ and $\nabla V(p) = \sum_{i=1}^{3} a_ie_{i}$, we have
\begin{multline*}
 |C|^2=8|\mathring{Ric}|^2(p)|\nabla V|^2(p) - 12|(\mathring{Ric})(\nabla V,-)|^2(p) = \\ = 4a_{1}^2(2\lambda_{2}^2+2\lambda_{3}^2-\lambda_{1}^2) + 4a_{2}^2(2\lambda_{1}^2+2\lambda_{3}^2-\lambda_{2}^2) + 4a_{3}^2(2\lambda_{1}^2+2\lambda_{2}^2-\lambda_{3}^2).
\end{multline*}
\indent Since $tr(\mathring{Ric})=\lambda_{1}+\lambda_{2}+\lambda_{3} = 0$, we obtain
\begin{equation*}
 |C|^2 = 4a_{1}^2(\lambda_{1}+2\lambda_{2})^2 + 4a_{2}^2(\lambda_{2}+2\lambda_{1})^2 + 4a_{3}^2(\lambda_{1}-\lambda_{2})^2.
\end{equation*} 
\indent The last statement of the lemma follows.
\end{proof}

\section{The Bochner type formula}

\begin{prop} Assume $(M^3,g)$ admits a static potential $V\in C^{\infty}(M)$. Then
\begin{equation} \label{eqbochnersecao}
 \frac{1}{2}div(V\nabla|\mathring{Ric}|^2) = (|\mathring{Ric}|^2 + \frac{|C|^2}{2})V + (R |\mathring{Ric}|^2 + 18 det(\mathring{Ric}))V.
\end{equation}
\end{prop}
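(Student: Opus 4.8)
The plan is to open the left-hand side with the product rule and the elementary second-order identity $\frac{1}{2}\Delta|T|^2=|\nabla T|^2+\langle T,\Delta T\rangle$ (valid for any tensor $T$, with $\Delta$ the connection Laplacian), so as to isolate a rough-Laplacian term, and then to rewrite that term using the formulas for the Cotton tensor established in Section 3. Write $S=\mathring{Ric}$. Since $div(V\nabla|S|^2)=V\Delta|S|^2+\langle\nabla V,\nabla|S|^2\rangle$, one gets
$$
\frac{1}{2}\,div\!\big(V\nabla|S|^2\big)=V|\nabla S|^2+V\langle S,\Delta S\rangle+\frac{1}{2}\langle\nabla V,\nabla|S|^2\rangle .
$$
In particular the coefficient of $V$ on the right is $|\nabla\mathring{Ric}|^2$, in agreement with \eqref{eqBochner} in the Introduction, and the whole problem is reduced to the computation of $V\langle S,\Delta S\rangle$.

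For this I would use that, the scalar curvature being constant, $div\,S=\frac{1}{2}dR=0$, and that by \eqref{eqcott} the Cotton tensor is the ``curl'' of $S$, i.e. $C_{ijk}=S_{ij;k}-S_{ik;j}$. Taking the divergence in the last index and commuting covariant derivatives by the Ricci identity,
$$
\nabla^k C_{ijk}=\Delta S_{ij}-g^{kl}[\nabla_l,\nabla_j]S_{ik},
$$
and the commutator is linear in the curvature tensor; hence in dimension three, after substituting the expression \eqref{eqriem3} for $Rm$ and using $tr(S)=0$, a short computation (with a fixed sign convention for the curvature) gives an identity of the form
$$
\Delta S_{ij}=\nabla^k C_{ijk}+3S_i{}^kS_{kj}+\frac{R}{2}S_{ij}-|S|^2 g_{ij}.
$$

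Next I would contract with $VS^{ij}$. The algebraic part collapses using $tr(S)=0$ together with the identity $tr(S^3)=3\,det(S)$, valid for any traceless symmetric three-by-three tensor, and yields $VS^{ij}\big(3S_i{}^kS_{kj}+\frac{R}{2}S_{ij}-|S|^2 g_{ij}\big)=V\big(9\,det(S)+\frac{R}{2}|S|^2\big)$. For the first term the product rule gives
$$
VS^{ij}\nabla^k C_{ijk}=\nabla^k\!\big(VS^{ij}C_{ijk}\big)-V\,C_{ijk}S^{ij;k}-C_{ijk}S^{ij}V^{;k},
$$
where $C_{ijk}S^{ij;k}=\frac{1}{2}|C|^2$ by the antisymmetry $C_{ijk}=-C_{ikj}$, and $C_{ijk}S^{ij}V^{;k}=-\frac{V}{4}|C|^2$ by Lemma \ref{lemnormacott}. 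The remaining divergence is treated by making $VS^{ij}C_{ijk}$ explicit: substituting $VC_{ijk}=T_{ijk}-(S_{ij}V_{;k}-S_{ik}V_{;j})$ from \eqref{eqcott}, with $T$ given by \eqref{eqt}, and contracting with $S^{ij}$ (using $tr(S)=0$) gives $VS^{ij}C_{ijk}=-2|S|^2 V_{;k}+3S_k{}^lS_{lj}V^{;j}$; differentiating this and using $div\,S=0$, the static equations in the form $\Delta V=-\frac{R}{2}V$ and $Hess\,V=VS-\frac{RV}{6}g$, the identity $tr(S^3)=3\,det(S)$ once more, and Lemma \ref{lemnormacott} again to evaluate $S^{kl}S_{jl;k}V^{;j}=\frac{1}{2}\langle\nabla V,\nabla|S|^2\rangle+\frac{V}{4}|C|^2$, one expresses $\nabla^k(VS^{ij}C_{ijk})$ as a linear combination of $V|C|^2$, $VR|S|^2$, $V\,det(S)$ and $\langle\nabla V,\nabla|S|^2\rangle$. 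Substituting everything back into the first display, the terms $\langle\nabla V,\nabla|S|^2\rangle$ cancel and the rest assembles into \eqref{eqBochner}.

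The main obstacle is purely computational. The two commutations of second covariant derivatives force one to fix sign conventions for the curvature tensor and the Ricci identity and to use them consistently, and the last step (the divergence of $VS^{ij}C_{ijk}$) is a lengthy calculation. What makes it tractable is Lemma \ref{lemnormacott}: it forces the several a priori distinct full contractions of the Cotton tensor against $\mathring{Ric}$ and $\nabla V$ to collapse to multiples of $|C|^2$ and of $\langle\nabla V,\nabla|\mathring{Ric}|^2\rangle$, which together with the three-dimensional algebraic identity $tr(\mathring{Ric}^3)=3\,det(\mathring{Ric})$ handling the curvature terms is exactly what produces the clean right-hand side.
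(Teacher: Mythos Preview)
Your argument is correct and the pieces fit together exactly as you describe; in particular the chain
\[
VS^{ij}\nabla^kC_{ijk}\;\longrightarrow\;\nabla^k\!\big(VS^{ij}C_{ijk}\big)\;\longrightarrow\;-\tfrac{1}{2}\langle\nabla V,\nabla|S|^2\rangle+\tfrac{R}{2}V|S|^2+\tfrac{3V}{4}|C|^2+9V\det S
\]
combines with the algebraic contraction and the first display to cancel the $\langle\nabla V,\nabla|S|^2\rangle$ terms and produce the stated formula.

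The route, however, is not the one the paper takes. The paper never writes down a Weitzenb\"ock formula for $\Delta\mathring{Ric}$; instead it uses the static equation $\mathring{Hess\,V}=V\mathring{Ric}$ to trade $V\Delta\mathring{Ric}$ for $\Delta(\mathring{Hess\,V})$, and then computes the latter by commuting \emph{third} covariant derivatives of the scalar $V$ through the auxiliary tensor $T$ of Lemma~\ref{lemt}. The curvature enters only through one Riemann factor acting on $\nabla V$, so the algebra is lighter than commuting second derivatives of a $2$-tensor; the Cotton contraction $C_{ijk}S^{ij}V^{;k}$ then appears directly in the final line and is converted by Lemma~\ref{lemnormacott}. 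Your approach stays on the Ricci side throughout, derives the commutator formula for $\Delta S$ explicitly, and pays for it with the extra divergence computation $\nabla^k(VS^{ij}C_{ijk})$, which you handle by expanding $VC_{ijk}$ via \eqref{eqcott} and invoking Lemma~\ref{lemnormacott} a second time. Both arguments hinge on the same two ingredients --- Lemma~\ref{lemnormacott} and the identity $tr(\mathring{Ric}^3)=3\det\mathring{Ric}$ --- but yours makes the Weitzenb\"ock nature of the identity transparent (which is precisely the interpretation the paper gives at the end of Section~6), while the paper's detour through $\mathring{Hess\,V}$ keeps the intermediate computations shorter.
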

\begin{proof}
\indent Without loss of generality, we rescale the metric in such way that $R=6\epsilon$ for some $\epsilon\in\{-1,0,1\}$. During the computation, we will use the static equations written as
\begin{align}
  (\mathring{Hess V})_{ij} & =  V(\mathring{Ric})_{ij},  \label{eqesttraco1} \\
   V_{;ij}  & = (\mathring{Hess V})_{ij} - \epsilon Vg_{ij}, \label{eqesttraco2}
\end{align} 
\noindent and the fact that $\mathring{Ric}$ is divergence free, which is a consequence of the contracted second Bianchi identity $2div(Ric) = dR = 0$. \\
\indent We have
\begin{align}
  \nonumber \frac{1}{2}div(V|\mathring{Ric}|^2) & = \frac{1}{2}g(\nabla V,\nabla|\mathring{Ric}|^2) + V\Delta\frac{|\mathring{Ric}|^2}{2} \\
                               & = (\mathring{Ric})_{ij;p}\tensor{(\mathring{Ric})}{^i^j}\tensor{V}{_;^p} + V|\nabla{\mathring{Ric}}|^2 + Vg(\Delta \mathring{Ric},\mathring{Ric}).  \label{eqdiva}                      
\end{align}
\indent By the static equations (\ref{eqesttraco1}) and (\ref{eqesttraco2}),
\begin{align}
\nonumber (\Delta(\mathring{Hess V}))_{ij} & =  (\Delta V)\mathring{Ric}_{ij} + V(\Delta \mathring{Ric})_{ij} + 2(\mathring{Ric})_{ij;p}\tensor{V}{_;^p}  \\ 
  \Rightarrow  V(\Delta\mathring{Ric})_{ij}  & =  (\Delta(\mathring{Hess V}))_{ij} +3\epsilon V(\mathring{Ric})_{ij} - 2 (\mathring{Ric})_{ij;p}\tensor{V}{_;^p}. \label{eqdeltaRic}
\end{align}
\indent Now we compute $\Delta(\mathring{Hess V})$. 
Following the notations of Lemma \ref{lemt}, let $T_{ijk}=(\mathring{Hess})_{ij;k} - (\mathring{Hess})_{ik;j}$. Taking two derivatives of $\mathring{Hess V}$, we obtain 
\begin{align*}
 (\mathring{Hess V})_{ij;kl} & = (\mathring{Hess V})_{ik;jl} + T_{ijk;l} = (\mathring{Hess V})_{ki;jl} + T_{ijk;l} \\
                             & = (\mathring{Hess V})_{ki;lj} + \tensor{R}{^p_k_j_l}(\mathring{Hess V})_{pi} + \tensor{R}{^p_i_j_l}(\mathring{Hess V})_{kp} + T_{ijk;l} \\
                             & = (\mathring{Hess V})_{kl;ij} + \tensor{R}{^k_p_l_j}\tensor{(\mathring{Hess V})}{^p_i} - R_{pilj}\tensor{(\mathring{Hess V})}{_k^p} \\
                             & \quad + T_{kil;j} + T_{ijk;l}.
\end{align*}
\indent Contracting the two last indexes and using (\ref{eqriem3}), we have
\begin{align}
 \nonumber (\Delta(\mathring{Hess V}))_{ij} & = \tensor{(\mathring{Hess V})}{^q_q_;_i_j} + R_{pj}\tensor{(\mathring{Hess V})}{^p_i} - R_{piqj}\tensor{(\mathring{Hess V})}{^q^p} \\
 \nonumber                       & \quad  + \tensor{T}{^q_i_q_;_j} + \tensor{T}{_i_j_q_;^q} \\
 \nonumber                       & =  \tensor{(\mathring{Ric})}{_p_j}\tensor{(\mathring{Hess V})}{^p_i} + 2\epsilon g_{pj}\tensor{(\mathring{Hess V})}{^p_i} \\
 \nonumber                       & \quad - \epsilon (g_{pq}g_{ij}-g_{pj}g_{iq})\tensor{(\mathring{Hess V})}{^q^p} \\ 
  \nonumber                      & \quad - (\mathring{Ric})_{pq}g_{ij}\tensor{(\mathring{Hess V})}{^q^p} + (\mathring{Ric})_{pj}g_{iq}\tensor{(\mathring{Hess V})}{^q^p} \\
  \nonumber                      & \quad  + (\mathring{Ric})_{iq}g_{pj}\tensor{(\mathring{Hess V})}{^q^p} - (\mathring{Ric})_{ij}g_{pq}\tensor{(\mathring{Hess V})}{^q^p} \\
  \nonumber                      & \quad + \tensor{T}{^q_i_q_;_j} + \tensor{T}{_i_j_q_;^q}  \\
  \nonumber                      & =  3\epsilon V(\mathring{Ric}_{ij}) - Vg_{ij}(\mathring{Ric})_{pq}\tensor{(\mathring{Ric})}{^q^p} \\
  \nonumber                      & \quad + 2V(\mathring{Ric})_{pj}\tensor{(\mathring{Ric})}{^p_i} + V(\mathring{Ric})_{iq}\tensor{(\mathring{Ric})}{_j^q} \\
                                 & \quad + \tensor{T}{^q_i_q_;_j} + \tensor{T}{_i_j_q_;^q}. \label{eqHessa}
\end{align} 
\indent By Lemma \ref{lemt},
\begin{equation*}
 T_{ijk} = - ((\mathring{Ric})_{ij}V_{k} - (\mathring{Ric})_{ik}V_{j}) - (g_{ij}(\mathring{Ric})_{kp}\tensor{V}{_;^p} - g_{ik}(\mathring{Ric})_{jp}\tensor{V}{_;^p}).
\end{equation*}
\indent Hence,
\begin{align}
 \nonumber \tensor{T}{^q_i_q_;_j} & = -((\tensor{(\mathring{Ric})}{^q_i}V_{;q} - \tensor{(\mathring{Ric})}{^q_q}V_{;i}) + (\tensor{g}{^q_i}(\mathring{Ric})_{qp}\tensor{V}{_;^p}  - \tensor{g}{^q_q}(\mathring{Ric})_{ip}\tensor{V}{_;^p}))_{;j} \\
 \nonumber                        & = ((\mathring{Ric})_{ip}\tensor{V}{_;^p})_{;j} = (\mathring{Ric})_{ip;j}\tensor{V}{_;^p} + (\mathring{Ric})_{ip}\tensor{V}{_;^p_j} \\
 \nonumber                        & = (\mathring{Ric})_{ip;j}\tensor{V}{_;^p} + (\mathring{Ric})_{ip}(\tensor{(\mathring{Hess V})}{^p_j} - \epsilon V\tensor{g}{^p_j} ) \\
                                  & = (\mathring{Ric})_{ip;j}\tensor{V}{_;^p} + V(\mathring{Ric})_{ip}\tensor{(\mathring{Ric})}{^p_j} - \epsilon V (\mathring{Ric})_{ij}. \label{eqTa}
\end{align}
\indent Moreover, recalling that $\tensor{(\mathring{Ric})}{_i_q_;^q}=0$, we also have
\begin{align}
\nonumber \tensor{T}{_i_j_q_;^q} & =  -( \tensor{(\mathring{Ric})}{_i_j_;^q}V_{;q} - \tensor{(\mathring{Ric})}{_i_q_;^q}\tensor{V}{_;_j}) - ( (\mathring{Ric})_{ij}\tensor{V}{_;_q^q} - (\mathring{Ric})_{iq}\tensor{V}{_;_j^q}) \\
\nonumber                        & \quad - (g_{ij}\tensor{(\mathring{Ric})}{_q_p_;^q} - g_{iq}\tensor{(\mathring{Ric})}{_j_p_;^q})\tensor{V}{_;^p} - (g_{ij}\tensor{(\mathring{Ric})}{_q_p} - g_{iq}\tensor{(\mathring{Ric})}{_j_p})\tensor{V}{_;^p^q} \\
\nonumber                        & = - \tensor{(\mathring{Ric})}{_i_j_;_q}\tensor{V}{_;^q} - (-3\epsilon V)(\mathring{Ric})_{ij} + (\mathring{Ric})_{iq}(\tensor{(\mathring{Hess V})}{^q_j} - \epsilon V \tensor{g}{^q_j}) \\
\nonumber                        & \quad + \tensor{(\mathring{Ric})}{_j_p_;_i}\tensor{V}{_;^p} - (g_{ij}\tensor{(\mathring{Ric})}{_q_p} - g_{iq}\tensor{(\mathring{Ric})}{_j_p})(\tensor{(\mathring{Hess V})}{^p^q} - \epsilon V\tensor{g}{^p^q}) \\\
\nonumber                        & = \epsilon V (\mathring{Ric})_{ij} + V(\mathring{Ric})_{iq}\tensor{(\mathring{Ric})}{^q_j} + V(\mathring{Ric})_{jp}\tensor{(\mathring{Ric})}{^p_i} \\
                                 & \quad    - Vg_{ij}(\mathring{Ric})_{pq}\tensor{(\mathring{Ric})}{^q^p} + (\mathring{Ric})_{jp;i}\tensor{V}{_;^p} - (\mathring{Ric})_{ij;q}\tensor{V}{_;^q}. \label{eqTb}
\end{align}
\indent Putting together (\ref{eqHessa}), (\ref{eqTa}) and (\ref{eqTb}) we conclude that
\begin{align}
 \nonumber (\Delta (\mathring{Hess V}))_{ij} & = 3\epsilon V (\mathring{Ric})_{ij} - 2V(\mathring{Ric})_{pq}(\mathring{Ric})^{pq}g_{ij} \\
   \nonumber                          & \quad  + 3V((\mathring{Ric})_{jp}\tensor{(\mathring{Ric})}{^p_i} + (\mathring{Ric})_{ip}\tensor{(\mathring{Ric})}{^p_j}) \\
                               & \quad + ((\mathring{Ric})_{ip;j}+(\mathring{Ric})_{jp;i} - (\mathring{Ric})_{ij;p})\tensor{V}{_;^p}. \label{eqdeltaHess}
\end{align}
\indent Combining (\ref{eqdiva}), (\ref{eqdeltaRic}) and (\ref{eqdeltaHess}) we finally obtain
\begin{align*}
 \nonumber \frac{1}{2}div(V\nabla|\mathring{Ric}|^2) & = V|\nabla \mathring{Ric}|^2 + 6\epsilon V|\mathring{Ric}|^2  \\ 
 \nonumber       & \quad  + 3V(\tensor{(\mathring{Ric})}{_i_p}\tensor{(\mathring{Ric})}{^p_j}+\tensor{(\mathring{Ric})}{_j_p}\tensor{(\mathring{Ric})}{^p_i})\tensor{(\mathring{Ric})}{^i^j} \\
 \nonumber & \quad - 2((\mathring{Ric})_{ij;p}-(\mathring{Ric})_{ip;j})(\mathring{Ric})^{ij}\tensor{V}{_;^p} \\ 
\nonumber      &  =   V|\nabla \mathring{Ric}|^2 - 2C_{ijp}\tensor{(\mathring{Ric})}{^i^j}\tensor{V}{_;^p}  \\ 
               & \quad + 6\epsilon V|\mathring{Ric}|^2 + 6 Vtr(\mathring{Ric}\circ\mathring{Ric}\circ\mathring{Ric}).
\end{align*}
\indent To finish the proof, we use Lemma \ref{lemnormacott} and observe that if $\lambda_1,\lambda_2,\lambda_3$ are the eigenvalues of $\mathring{Ric}$ at some point of $M$, then $\lambda_1+\lambda_2+\lambda_3=0$ implies 
\begin{equation*}
 tr(\mathring{Ric}\circ\mathring{Ric}\circ\mathring{Ric}) = \lambda_1^3+\lambda_2^3+\lambda_3^3 = - 3\lambda_1^2\lambda_2 - 3\lambda_1\lambda_2^2 = 3\lambda_1\lambda_2\lambda_3 = 3det(\mathring{Ric}).
\end{equation*}
\end{proof}
\section{Proof of Theorem A}

\begin{thm} \label{thmBochner}
 Let $(M^3,g,V)$ be a compact oriented static triple with positive scalar curvature. If $|\mathring{Ric}|^2 \leq R^2/6$, then one of the following alternatives holds:
 \begin{itemize}
  \item[$i)$] $\mathring{Ric}= 0$ and $(M^3,g,V)$ is equivalent to the standard hemisphere; or
  \item[$ii)$] $|\mathring{Ric}|^2 = R^2/6$ and $(M^3,g,V)$ is covered by a static triple that is equivalent to the standard cylinder.
 \end{itemize}
\end{thm}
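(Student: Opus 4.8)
The plan is to integrate the Bochner type formula $(\ref{eqbochnersecao})$ over $M$ and combine the resulting integral identity with an elementary optimization for the determinant of a trace-free symmetric operator in dimension three. Since $M$ is compact and $V$ vanishes identically on $\partial M$, the divergence theorem turns the left-hand side of $(\ref{eqbochnersecao})$ into a boundary integral carrying a factor of $V$, hence zero, and we obtain
\begin{equation*}
 \int_{M}\left(|\nabla\mathring{Ric}|^2 + \frac{|C|^2}{2} + R|\mathring{Ric}|^2 + 18\det(\mathring{Ric})\right)V\,d\mu = 0 .
\end{equation*}
Here $R$ is a positive constant (a compact static triple with positive scalar curvature must have $\partial M\neq\emptyset$, by Proposition \ref{propcompactas}) and $V>0$ on the dense open set $M\setminus\partial M$.

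Next I would use the sharp pointwise bound $\det(\mathring{Ric})\geq-\frac{1}{3\sqrt{6}}\,|\mathring{Ric}|^3$, valid for any trace-free symmetric two-tensor in dimension three: writing $\lambda_1,\lambda_2,\lambda_3$ for the eigenvalues, the constraint $\lambda_1+\lambda_2+\lambda_3=0$ reduces the extremization of $\lambda_1\lambda_2\lambda_3$ at fixed $\sum\lambda_i^2$ to a one–variable problem, whose minimum $-\frac{1}{3\sqrt 6}|\mathring{Ric}|^3$ is attained exactly when two eigenvalues coincide and the simple one is non-positive. Equivalently $18\det(\mathring{Ric})\geq-\sqrt{6}\,|\mathring{Ric}|^3$, so under the hypothesis $|\mathring{Ric}|\leq R/\sqrt 6$ one gets, pointwise,
\begin{equation*}
 R|\mathring{Ric}|^2 + 18\det(\mathring{Ric}) \ \geq\ |\mathring{Ric}|^2\bigl(R-\sqrt{6}\,|\mathring{Ric}|\bigr)\ \geq\ 0 .
\end{equation*}
Thus the integrand above is $V$ times a non-negative continuous function; since the integral vanishes and $V>0$ in the interior, that function vanishes identically on $M$, and separating the non-negative summands yields $\nabla\mathring{Ric}\equiv0$, $C\equiv0$, and $R|\mathring{Ric}|^2+18\det(\mathring{Ric})\equiv0$.

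Finally I would conclude through the classification in the locally conformally flat case. Since $C\equiv0$ the metric is locally conformally flat, so Theorem \ref{thmconfflat} applies and $(M^3,g,V)$ is covered by one of the three model triples. The Schwarzschild--de Sitter examples are excluded because they do not have parallel Ricci tensor, whereas $\nabla\mathring{Ric}\equiv0$ together with $R$ constant forces $\nabla Ric\equiv0$, a property inherited by any covering. Hence $(M^3,g,V)$ is covered by the round hemisphere or by the standard cylinder. In the first case $\mathring{Ric}=0$ and, the hemisphere being simply connected, the covering is trivial, so $(M^3,g,V)$ is equivalent to it: this is alternative $i)$. In the second case one reads off $|\mathring{Ric}|^2=R^2/6$ directly on the cylinder (equivalently from the equality $R-\sqrt6\,|\mathring{Ric}|=0$ of Step 2, which applies since $\mathring{Ric}\not\equiv0$ there): this is alternative $ii)$.

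The argument is short once $(\ref{eqbochnersecao})$ is available, the only substantive inputs being the sharp algebraic inequality for $\det(\mathring{Ric})$ and Theorem \ref{thmconfflat}. The step I expect to require the most care is the pointwise estimate: one must check that the hypothesis $|\mathring{Ric}|^2\le R^2/6$ is precisely what prevents the (possibly negative) determinant term from spoiling the sign, and that equality throughout the chain of inequalities leaves room for no geometry other than the two stated models.
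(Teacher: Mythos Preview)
Your proof is correct and follows essentially the same route as the paper: integrate the Bochner formula (\ref{eqbochnersecao}) using $V|_{\partial M}=0$, apply the sharp algebraic bound $18\det(\mathring{Ric})\geq -\sqrt{6}\,|\mathring{Ric}|^3$ (equivalently $54(\det A)^2\le |A|^6$ for traceless symmetric $A$) to make the integrand non-negative, and then invoke Theorem~\ref{thmconfflat} once $C\equiv0$ and $\nabla\mathring{Ric}\equiv0$ are established. Your write-up is slightly more explicit than the paper's in justifying why the hemisphere covering is trivial and why Schwarzschild--de Sitter is excluded, but the argument is the same.
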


\begin{proof}
 \indent Using Lagrange's multipliers method, one proves that every symmetric traceless linear endomorphism $A:\mathbb{R}^3 \rightarrow \mathbb{R}^3$ satisfies
 \begin{equation*}
    54(det(A))^2 \leq |A|^6, 
 \end{equation*}
  \noindent with equality if and only if $A$ has at most two distinct eigenvalues. Hence, since $V$ is non-negative, the last two terms of the Bochner type formula (\ref{eqbochnersecao}) satisfy the inequality
\begin{equation} \label{ineqauto}
 (R|\mathring{Ric}|^2 + 18 det(\mathring{Ric}))V \geq (R-\sqrt{6}|\mathring{Ric}|)|\mathring{Ric}|^2V,
\end{equation}
\noindent with equality at $p\in M\setminus\partial M$ if and only if $\mathring{Ric}(p)$ has at most two distinct eigenvalues. Since $V$ vanishes on $\partial M$, integrating (\ref{eqbochnersecao}) on $M$ and using (\ref{ineqauto}) we obtain
 \begin{equation*}
  0 \geq \int_{M} \left(|\nabla\mathring{Ric}|^2 + \frac{|C|^2}{2}\right)Vd\mu + \int_{M} \left(R-\sqrt{6}|\mathring{Ric}|\right)|\mathring{Ric}|^2V d\mu.
 \end{equation*}  
 \indent Our assumptions therefore imply that $(M^3,g,V)$ is a locally conformally flat static triple with parallel Ricci tensor, which either satisfies $\mathring{Ric} = 0$ or $R=\sqrt{6}|\mathring{Ric}|$. The result follows then by the classification results of \cite{Kob} and \cite{Laf}, see Theorem \ref{thmconfflat} in the Introduction. 
\end{proof}

\section{The associated singular Einstein manifold}

\indent Let $(M^n,g,V)$ be a static triple. As usual, we assume it has constant scalar curvature $\epsilon n(n-1)$ for some $\epsilon\in\{\-1,0,1\}$. \\ 
\indent Let $\mathcal{U}$ be a small tubular neighborhood of $\partial M$ diffeomorphic to $[0,1)\times \partial M$. Given $p\in \mathcal{U}$ and the corresponding point $(r,x)\in[0,1)\times \partial M$, we write $p=(r,x)$, for simplicity. Let $B_{1}^{2}$ denote the open unit ball centred at the origin of $\mathbb{R}^2$ and let $S^1=\mathbb{R}/2\pi\mathbb{Z}$ denote the unit circle.\\
\indent Let $\mathcal{N}^{n+1}$ be the quotient set
\begin{equation*}
 (S^1 \times (M\setminus\partial M)) \sqcup (B_{1}^2\times\partial M) /\sim,
\end{equation*}
\noindent where the equivalence relation $\sim$ identifies $(\theta,p)\in (S^{1}\times(\mathcal{U}\setminus\partial M))$ with $(r\cos\theta,r\sin\theta,x)\in(B_{1}^2\setminus\{0\})\times\partial M$ if $p=(r,x)$. \\
\indent $\mathcal{N}^{n+1}$ canonically inherits a structure of smooth manifold. $\{0\}\times \partial M$ is a codimension-two submanifold of $\mathcal{N}$ which we can identify with $\partial M\subset M$. \\
\indent One can intuitively imagine $\mathcal{N}^{n+1}$ as the manifold obtained by rotating $M$ around $\partial M$. \\
\indent The map that associates to each $\theta_0\in S^1$ the diffeomorphism of $\mathcal{N}$ determined  by $(\theta, p) \in S^1\times(M\setminus\partial M) \mapsto (\theta+\theta_0,p)\in S^1\times(M\setminus\partial M)$, $p\in \partial M \mapsto p \in \partial M$ is a smooth action of $S^1$ on $\mathcal{N}$ whose set of fixed points is $\partial M\subset \mathcal{N}$. We denote by $X\in\mathcal{X}(\mathcal{N})$ the vector field generating this action. \\
\indent For every $\theta\in S^1$ the map $\phi_{\theta}: M \rightarrow \mathcal{N}$ defined by $p\in M\setminus\partial M \mapsto (\theta,p)\in S^1\times (M\setminus\partial M)$, $p\in\partial M \mapsto (0,p)\in B_{1}^{2}\times\partial M$ is a smooth embedding. It gives a natural identification between $M$ and the space of orbits of the $S^1$ action on $\mathcal{N}$ . \\
\indent We denote by $\Omega$ the open dense subset $\mathcal{N}\setminus \partial M\subset \mathcal{N}$. $\Omega$ is naturally identified with $S^1\times (M\setminus \partial M)$. Since $V$ is positive on $M\setminus \partial M$, we can define a smooth Riemannian metric $h$ on $\Omega$ by the formula
\begin{equation}
 h = V^2d\theta^2 + g.
\end{equation}
\indent The maps $\phi_{\theta}: (M\setminus{\partial M},g)\rightarrow (\Omega,h)$ are then isometric embeddings. Moreover, the vector field $X$ is orthogonal to every hypersurface $\phi_{\theta}(M\setminus \partial M)$ and such that $|X|\circ \phi_{\theta} = V$ on $M\setminus \partial M$. These conditions uniquely determine the metric $h$. \\
\indent A computation in coordinates shows that the Ricci tensor of $h$ is given by
\begin{align*}
 Ric_{h}(X,X) & =  -\left(\frac{1}{V}\Delta_{g}V\right) h(X,X), \\
 Ric_{h}(X,Z) & =  0, \\
 Ric_{h}(Y,Z) & =  Ric_{g}(Y,Z) - \frac{1}{V}(Hess_{g} V)(Y,Z), 
\end{align*}
\noindent where $Y, Z$ are tangent to $M$. Since $V$ is a static potential and $(M^n,g)$ has scalar curvature $\epsilon n(n-1)$, the metric $h$ is Einstein, 
\begin{equation*}
 Ric_{h}= \epsilon nh.
\end{equation*}
\indent It is also a straightforward computation to verify that $X=\partial_{\theta}$ is a Killing vector field, i.e., that the $S^1$ action described before is an action by isometries.\\
\indent To understand the behaviour of the metric $h$  near a connected component $\partial_{i} M$ of $\partial M$, we introduce Fermi coordinates. Let $s$ be the distance function to $\partial M$ in $(M,g)$ and let $(x^2,x^3)$ be a local coordinate system on a neighborhood $W$ of a point in $\partial_{i} M$. Using $(x^{0}=\theta,x^{1}=s,x^2,x^3)$ as local coordinates in $\mathcal{N}\setminus\partial M$, we can write
\begin{equation} \label{conic}
 h = (ds^2 + k_{i}^{2} s^2 d\theta^2 + \sum_{i,j=2}^{3}g_{ij}(0,x^2,x^3)dx^i dx^j) + E,
\end{equation}
\noindent where $k_i$ denote the constant value of $|\nabla V|$ on $\partial_i M$ and $|E|\leq C s^2$ for some constant $C>0$. Indeed, using the fact that $\partial M$ is totally geodesic and Proposition \ref{propexpansao}, we obtain the expansions
\begin{equation*}
  g_{ij}(s,x^1,x^2) = g_{ij}(0,x^1,x^2) + O(s^2) \quad \text{and} \quad V(s,x^1,x^2) = k_{i}s+O(s^3).
\end{equation*}
\indent We remark that $E_{1A}=E(\partial_s,\partial_{x^{A}})=0$ for all $A=0,1,2,3$. Moreover, the coefficients of $E$ in this coordinate system depend only on the coordinates $x^1=s,x^2,x^3$ and have a Taylor expansion in $s$ as $s$ goes to zero with coefficients that are smooth functions of the coordinates $x^2,x^3$. \\
\indent Equation (\ref{conic}) has the interpretation that near a connected component $\partial_{i} M$ of the singular set $\partial M\subset \mathcal{N}$, the metric $h$ has the structure of cone metric with link $S^{1}(k_i)$ (the circle with length $2\pi k_i$, a number sometimes called the \textit{cone angle}) along the codimension-two smooth submanifold $(\partial_{i} M,g_{\partial M})$, up to a perturbation. Metrics with this singular behaviour has been already described and studied in the literature. We refer the reader to \cite{LiuShe} (see their definition of singular Riemannian metric), \cite{AtiLeB} (where the singularities are called of ``edge-cone" type) and to the more general notion of iterated edge metrics on iterated edge spaces used in \cite{AkuCarMaz1} (see also \cite{AkuCarMaz2}). \\
\indent Summarizing up, the manifold $\mathcal{N}^{n+1}$ endowed with the Einstein edge metric $h$ and the $S^1$ action by isometries generated by the vector field $X$, which is everywhere orthogonal to the orbit space $(M^n,g)$, vanishes on the singular set $\partial M$ and has norm $V$, will be called the \textit{associated singular Einstein manifold} to the static triple $(M^n,g,V)$. \\

\indent We proceed with a description of some topological and geometrical properties of the associated singular Einstein manifold. \\
\indent $\mathcal{N}^{n+1}$ is compact if and only if $M^n$ is compact, and it is simply connected if $M$ and $\partial M$ are simply connected (by Van Kampen's Theorem). When $\mathcal{N}$ is compact, since the vector field $X\in\mathcal{X}(\mathcal{N})$ does not vanish on $\Omega$, $\mathcal{N}$ and $\partial M= \{X=0\}$ have the same Euler characteristic (one can argue as in \cite{Kob2}). When $\mathcal{N}$ is oriented, $X \mapsto -X$ gives rise to an orientation inverting diffeomorphism. \\
\indent The metric $h=V^2d\theta^2+g$ extends smoothly to a component $\partial_{i} M$ of $\partial M\subset \mathcal{N}$ if and only if there exists a smooth function $U : [0,\sqrt{\eta})\times\partial_i M \rightarrow [0,+\infty)$ such that $U(0,p)=1$ for all $p\in \partial M$ and $V(s,p)=sU(s^2,p)$ in the Fermi coordinate system $[0,\eta)\times \partial_{i}M$ (see \cite{Bes}, 9.114). A necessary condition is that $k_{i}=|\nabla V|=1$ on $\partial_i M$. This condition is usually stated to be also sufficient (see \cite{BouGibHor} or \cite{GibHaw}). When $h$ extends smoothly to all $\mathcal{N}$, $(\mathcal{N},h)$ is an example of the Einstein manifolds studied in \cite{Ses}. \\
\indent We emphasize that, in general, equivalent static triples with the same scalar curvature give rise to \textit{geometrically distinct} associated Einstein manifolds. In fact, the orbits of the $S^1$ action will have different lengths if the static potentials are proportional by a factor $\lambda\neq 1$.  \\
\indent When $\partial M$ is compact it is possible normalize the static potential $V$ in such way that $|\nabla V|\leq 1$ on $\partial M$. This means that in this situation, to study $(M^n,g,V)$ one can work with an edge space $(\mathcal{N}^{n+1},h)$ such that all cone angles are less than or equal to $2\pi$. \\
\indent Since $(M,g)$ is complete, $\mathcal{N}$ endowed with the distance function induced by $h$ is a complete length space and the Hopf-Rinow Theorem holds (see \cite{Pfl}, Section 2.4). In particular, $\mathcal{N}$ is compact if and only if $(\mathcal{N},h)$ has finite diameter. \\
\indent The structure of the metric $h$ near the singular set clearly implies that geodesics realizing the distance between a point in $\mathcal{N}$ and a component of $\partial M$ meets $\partial M$ orthogonally. The proof of this fact is essentially the same as the proof of the Gauss' Lemma. \\
\indent  For the computation of the volume of an associated singular Einstein manifold $(\mathcal{N}^{n+1},h)$ to a compact oriented static triple with scalar curvature $R=n(n-1)$, the singular codimension-two set $\partial M$ is negligible. The relation between the volume element of $h$ and the volume element of $g$ is $d\mu_h=Vd\theta\wedge d\mu_g$. Hence, Fubini's theorem and the static equations yield
 \begin{multline*}
  |N|= \int_{0}^{2\pi}\int_{M} d\mu_h = 2\pi\int_{M} V d\mu_g = -\frac{2\pi}{n}\int_{M}\Delta Vd\mu_g = \\ = \frac{2\pi}{n}\int_{\partial M} g\left(\nabla V,\frac{\nabla V}{|\nabla V|}\right)d\sigma_g=\frac{2\pi}{n}\sum_{i=1}^{r}k_i|\partial_i M|.
 \end{multline*}
\indent Each function $\phi$ on $M^n$ corresponds to a function on $\mathcal{N}^{n+1}$ invariant under the $S^1$ action. For the sake of simplicity, we make no notational distinction between them. If $\psi\in C^{\infty}(M)$, a direct computation in coordinates shows that $V\Delta_{h}\psi = V\Delta_{g}\psi + g(\nabla_g V,\psi) = div_{g}(V\nabla_{g}\psi)$ in $\Omega$. \\
\indent In the four-dimensional case, the Riemann curvature tensor of the Einstein metric $h$ is determined by the Weyl tensor $W$ of $h$. An explicit calculus in coordinates shows that $W$ depends only on the Ricci tensor of the metric $g$. In fact, if $\{e^{0}=V^{-1}\partial_{\theta},e^{1},e^2,e^{3}\}$ is an orthonormal basis of $T_{p}\mathcal{N}$, $p\in \Omega$, all the components $W_{ABCD}$ vanish except 
\begin{equation*}
 W_{ijkl} = (\mathring{Ric}_{g} \odot g)_{ijkl}, \quad W_{i0j0} = - (\mathring{Ric}_{g})_{ij} \quad \text{for} \quad i,j,k,l=1,2,3.
\end{equation*}
\indent When $\mathcal{N}^4$ is oriented, considering $W$ as a symmetric endomorphism of the space of two-forms on $M$ and its usual self-dual/anti-self-dual decomposition $W=W^{+}+W^{-}$ (see \cite{Bes}), one can also show by a direct computation that 
\begin{equation*}
 |W^{+}|_{h}^2=|W^{-}|_{h}^2=|\mathring{Ric}_g|^2_{g}.
\end{equation*}
\indent In fact, $W^{+}$ and $-Ric_g$ have the same set of eigenvalues. We will use these facts about $W$ in the last sections. \\
\indent As a final remark concerning the four-dimensional case, we can now unveil the origin of the Bochner type formula (\ref{eqbochnersecao}) in a static triple $(M^3,g,V)$: it is just the Bochner formula for the self-dual part of the Weyl tensor of the Einstein metric $h=V^2d\theta^2+g$ (see \cite{Gur}, Formula 1.3). \\

\indent We finish our description of the associated singular Einstein manifolds to static triples by giving some examples. \\

\noindent $1)$ Given the unit hemisphere $(S^{n}_{+},g_{can},V = x_{n+1})$, the associated Einstein manifold is the unit round sphere $(S^{n+1},g_{can})$. The $S^1$ action is generated by rotations in the plane $\{x_{1}=\ldots=x_{n}=0\}\subset \mathbb{R}^{n+2}$. \\
 
\noindent $2)$ Given the standard three-dimensional cylinder over $S^2$ with scalar curvature $6$, the associated Einstein manifold is the product
 \begin{equation*}
  (S^2\times S^2, \frac{1}{3}(g_{can}+g_{can})).
 \end{equation*}
 \indent The $S^1$ action is generated by rotations of a $S^2$ factor around a fixed axis. \\
 
\noindent $3)$ Any Schwarzschild-de Sitter space of positive mass $m$ has the property that the norm of the gradient of its static potential attains different values on different boundary components. No rescaling of $V$ can remove the singularities of the associated Einstein manifold, which is, topologically, $S^2\times S^2$. \\

\section{Topology of compact static three-manifolds with positive scalar curvature}

\indent The topology of three-dimensional static triples $(M^3,g,V)$ can be studied using area-minimizing surfaces which can be produced in mean-convex manifolds by variational methods if the topology of $M$ is sufficiently complicated. For, as we will see, locally area-minimizing surfaces exist in $M\setminus \partial M$ only in exceptional cases. \\
\indent Let $(M^3,g,V)$ be a compact static triple. Let $\Sigma^2$ be a compact surface and $\phi : \Sigma \rightarrow M$ a proper embedding, i.e., an embedding of $\Sigma$ in $M$ such that $\phi(\Sigma) \cap \partial M = \phi(\partial \Sigma)$. Sometimes we identify $\Sigma$ with its image in $M$. We will frequently assume that $\Sigma$ is two-sided, i.e., that there exists a smooth unit normal vector field $N$ on $\Sigma$. \\
\indent Let $(\overline{M},\overline{g}):=(M\setminus\partial M,V^{-2}g)$. Since $M$ is compact, $V$ vanishes on $\partial M$ and $|\nabla V|$ is a positive constant on each boundary component of $M$, $(\overline{M},\overline{g})$ is a conformally compact Riemannian manifold. These Riemannian manifolds have bounded geometry \cite{AmmLauNis}, i.e., they have positive injectivity radius and the Riemann curvature tensor and all of its covariant derivatives are bounded. This allows one to consider, for some small $\delta>0$, the smooth flow $\Phi:[0,\delta)\times \Sigma \rightarrow M$ of $\Sigma$ with normal velocity $V$,
\begin{equation} \label{flow}
 \frac{d}{dt}\Phi_{t}(p) = V(\Phi_t(p))N_{t}(p) \quad \text{for all} \quad p\in \Sigma, \quad \quad \Phi_{0} = \phi.
\end{equation}
\indent In fact, (\ref{flow}) is just the flow of $\Sigma$ in $(\overline{M},\overline{g})$ by equidistant surfaces, which is smooth if $\delta$ is less than the injectivity radius of $(\overline{M},\overline{g})$. \\
\indent The surfaces $\Phi_{t}(\Sigma)$, $t\in[0,\delta)$, are compact properly embedded surfaces in $(M,g)$ with the same boundary as $\Sigma$, since $V$ vanishes on $\partial M$. \\ 
\indent We show that the flow (\ref{flow}) starting at a minimal surface typically decreases the area (compare with \cite{Bre}, Proposition 3.3, and \cite{LeeNev}, Lemma 2.4). More precisely, we prove the following.
\begin{prop} \label{propflow}
 Let $(M^3,g,V)$ be a static triple with scalar curvature $R=6\epsilon$, $\epsilon\in\{-1,0,1\}$. Let $\Sigma_0^2$ be a compact properly embedded two-sided minimal surface in $(M^3,g)$ and choose $N_0$ a smooth unit normal vector field on $\Sigma_0$. \\
 \indent  Let $\Phi_t$, $t\in[0,\delta)$, be the smooth normal flow with speed $V$ starting at $\Sigma_0$ defined by (\ref{flow}). Write $\Sigma_t=\Phi_{t}(\Sigma_0)$. Then 
 \begin{equation*}
  t\in [0,\delta) \mapsto |\Sigma_t|
 \end{equation*}
 \noindent is monotone non-increasing. When $t\mapsto |\Sigma_t|$ is constant,
 \begin{itemize}
 \item[$i)$] If $\Sigma$ is closed, then 
 \begin{equation*}
  \Phi : ([0,\delta)\times \Sigma_0,(V\circ\Phi)^2dt^2+g_{\Sigma_0}) \rightarrow (M,g)
 \end{equation*}
 \noindent is an isometry onto its image $W\subset M$. Moreover, $(\Sigma_0,g_{\Sigma_0})$ has constant Gaussian curvature $K=3\epsilon$ and $V$ is constant on each $\Sigma_t$, so that $g$ on $W$ is isometric to the product metric $ds^2+g_{\Sigma_0}$. \\
 \item[$ii)$] If $\partial \Sigma$ is not empty, then $\epsilon = 1$ and 
 \begin{equation*}
  \Phi : ([0,\delta)\times (\Sigma_0\setminus \partial\Sigma_0),(V\circ\Phi)^2dt^2+g_{\Sigma_0}) \rightarrow (M,g)
 \end{equation*}
 \noindent is an isometry onto its image $W\subset M\setminus \partial M$. Moreover, $(\Sigma_0,g_{\Sigma_0})$ is isometric to the round hemisphere $(S^2_{+},g_{can})$ with Gaussian curvature $K=1$ and $V$ restricted to $\Sigma_t$ is a static potential, so that $g$ on $W$ is isometric to the constant sectional curvature metric $V_{|\Sigma_0}^2d\theta^2+g_{\Sigma_0}$.
 \end{itemize}
\end{prop}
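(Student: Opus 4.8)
\emph{Approach.} The plan is to control $t\mapsto |\Sigma_t|$ by combining the first variation of area with the evolution of the mean curvature under the flow (\ref{flow}). Write $A_t$, $g_t$ and $H_t = tr_{g_t}A_t$ for the second fundamental form, the induced metric and the mean curvature of $\Sigma_t = \Phi_t(\Sigma_0)$ with respect to the unit normal $N_t$ carried by the flow. Since $\partial_t g_t = 2VA_t$, the first variation formula reads $\frac{d}{dt}|\Sigma_t| = \int_{\Sigma_t} VH_t\, d\sigma_t$, with no boundary term because $V$ vanishes on $\partial M$ and hence the variation field $VN_t$ vanishes on $\partial\Sigma_t$. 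The key point is that $VH_t\le 0$. Indeed, the standard evolution equation gives $\partial_t H_t = -\Delta_{\Sigma_t} V - (|A_t|^2 + Ric(N_t,N_t))V = -\mathcal{L}_{\Sigma_t}V$, while Proposition \ref{propjacobi} rewrites $\mathcal{L}_{\Sigma_t}V = -g(\nabla V,\vec{H}_t) + |A_t|^2 V = -H_t\, g(\nabla V, N_t) + |A_t|^2 V$; moreover $\partial_t(V\circ\Phi_t) = V\, g(\nabla V, N_t)$. Thus, along each flow line, $w(t) := (VH_t)\circ\Phi_t$ solves the linear ODE
\begin{equation*}
 w'(t) = 2\, g(\nabla V, N_t)\, w(t) - |A_t|^2 V^2, \qquad w(0) = 0,
\end{equation*}
the initial condition being the minimality of $\Sigma_0$; integrating gives $w(t) = -\int_0^t\exp\big(\int_s^t 2g(\nabla V, N_\tau)\, d\tau\big)\,|A_s|^2 V^2\, ds \le 0$. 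Therefore $VH_t\le 0$ everywhere and $t\mapsto|\Sigma_t|$ is non-increasing.

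\emph{The equality case.} Assume $t\mapsto|\Sigma_t|$ is constant. Then $\int_{\Sigma_t}VH_t\, d\sigma_t = 0$ for every $t$, and $VH_t \le 0$ forces $VH_t \equiv 0$ on the whole flow region; feeding $w\equiv 0$ into the ODE gives $|A_t|^2 V^2 \equiv 0$, so — using $V > 0$ off $\partial M$ and continuity of $A_t$ — each $\Sigma_t$ is totally geodesic. Consequently $\partial_t g_t = 2VA_t = 0$, i.e.\ $g_t \equiv g_{\Sigma_0}$; together with the fact that the flow lines remain orthogonal to the leaves (a computation identical to Gauss' Lemma, valid for any normal flow), this shows that in the coordinates $(t,x)\in[0,\delta)\times\Sigma_0$ the metric reads $g = (V\circ\Phi)^2 dt^2 + g_{\Sigma_0}$, so $\Phi$ is an isometry onto its image $W$ — restricted to $\Sigma_0\setminus\partial\Sigma_0$ in case $ii)$, where $W\subset M\setminus\partial M$ since (\ref{flow}) is the equidistant flow of the complete conformally compact manifold $(\overline{M},\overline{g})$.

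\emph{Identifying the leaf and the potential.} Fix $t$ and restrict the static equation (\ref{eqest2a}) to the totally geodesic surface $\Sigma_t$. Total geodesy makes the restriction of $Hess_g V$ to $T\Sigma_t$ coincide with $Hess^{\Sigma_t}(V|_{\Sigma_t})$, and the Codazzi equation gives $Ric_g(X, N_t) = 0$ for $X$ tangent to $\Sigma_t$; inserting the formula $Ric_g|_{T\Sigma_t} = K_{\Sigma_t}\, g_{\Sigma_t} - (V\circ\Phi)^{-1}Hess^{\Sigma_t}(V|_{\Sigma_t})$ for the metric $(V\circ\Phi)^2 dt^2 + g_{\Sigma_0}$ yields $2\, Hess^{\Sigma_t}(V|_{\Sigma_t}) = (V|_{\Sigma_t})(K_{\Sigma_t} - 3\epsilon)\, g_{\Sigma_t}$, i.e.\ $V|_{\Sigma_t}$ has vanishing traceless Hessian; contracting the second Bianchi identity on $\Sigma_t$ also gives the auxiliary identity $(V|_{\Sigma_t})\,\nabla K_{\Sigma_t} = -3(K_{\Sigma_t} - \epsilon)\,\nabla(V|_{\Sigma_t})$. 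Now invoke the classification of functions with vanishing traceless Hessian on a compact surface (the Appendix; in the closed case this is Obata's theorem). In case $i)$ the surface $\Sigma_t$ is closed and $V|_{\Sigma_t} > 0$: a non-constant such function would force $\Sigma_t$ to be a round sphere, on which the auxiliary identity together with the equation above leaves only $K_{\Sigma_t} \equiv \epsilon$ and $V|_{\Sigma_t}$ a first eigenfunction — which changes sign, a contradiction; hence $V|_{\Sigma_t}$ is constant, $K_{\Sigma_t} \equiv 3\epsilon$, and reparametrising $t$ by arclength along the flow lines turns $g|_W$ into the product metric $ds^2 + g_{\Sigma_0}$. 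In case $ii)$, $V|_{\Sigma_t}$ vanishes on $\partial\Sigma_t$ and is positive inside; since $K_{\Sigma_t}$ is smooth up to the boundary while $V|_{\Sigma_t}\to 0$ there, the auxiliary identity forces $K_{\Sigma_t}\equiv\epsilon$, so $V|_{\Sigma_t}$ solves $Hess^{\Sigma_t}(V|_{\Sigma_t}) = -\epsilon\,(V|_{\Sigma_t})\, g_{\Sigma_t}$; an elementary maximum-principle argument rules out $\epsilon\le 0$, and for $\epsilon = 1$ a Reilly/Obata-type argument (the Appendix) identifies $(\Sigma_t, g_{\Sigma_t})$ with the round hemisphere $(S^2_{+}, g_{can})$ and $V|_{\Sigma_t}$, up to scale, with its standard static potential. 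Reparametrising $t$ as $\theta$ then makes $g|_W$ equal to $V_{|\Sigma_0}^2\, d\theta^2 + g_{\Sigma_0}$, the round metric of the Einstein manifold associated to this two-dimensional static triple.

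\emph{Main obstacle.} The monotonicity of the areas is short and self-contained once Proposition \ref{propjacobi} is in hand. The real work is the equality case — and within it the surface classification step: extracting from the vanishing of the traceless Hessian of $V|_{\Sigma_0}$, under the sign (resp.\ boundary) constraint, that only a constant potential (case $i)$) or the round hemisphere (case $ii)$) can occur. In particular one must exclude the translated first eigenfunctions on the round sphere, which the bare equation $\mathring{Hess}\, u = 0$ does not rule out; this is exactly where the auxiliary identity $u\,\nabla K = -3(K-\epsilon)\,\nabla u$ — a further consequence of the static equation — is indispensable, and it is the content deferred to the Appendix here.
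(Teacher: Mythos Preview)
Your argument is correct and parallels the paper's closely: both combine the first variation with the evolution $\partial_t H_t=-\mathcal{L}_{\Sigma_t}V$ and Proposition~\ref{propjacobi} to get monotonicity, deduce total geodesy in the equality case, derive $2\,Hess_{\Sigma_t}V_t=(K_t-3\epsilon)V_t\,g_{\Sigma_t}$, and hand the classification off to Appendix~B. Two minor differences are worth flagging. First, you track $w=VH_t$ rather than $H_t$ itself; with the paper's convention $\vec H_t=-H_tN_t$ one has $-g(\nabla V,\vec H_t)=+H_tg(\nabla V,N_t)$, so your ODE actually simplifies to $w'=-|A_t|^2V^2$ --- the sign slip does not affect the conclusion $w\le 0$. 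Second, to obtain the Hessian equation the paper uses the variation formula $\partial_tA_{ij}=-VR_{iNjN}-(Hess_\Sigma V)_{ij}$ at $A\equiv 0$, whereas you quote the Ricci identity $Ric_g|_{T\Sigma_t}=K_{\Sigma_t}g_{\Sigma_t}-V^{-1}Hess_{\Sigma_t}V$ for the split metric $V^2dt^2+g_{\Sigma_0}$; these are equivalent (both say $R(X,N,N,Y)=-V^{-1}Hess_\Sigma V(X,Y)$), and your ``auxiliary identity'' $V\nabla K=-3(K-\epsilon)\nabla V$ is exactly the integrability condition $d\big((K-\epsilon)V^3\big)=0$ that drives Appendix~B. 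One caveat: your remark that the closed case ``is Obata's theorem'' is imprecise, since $K$ is not known to be constant a priori --- the Appendix actually uses K\"uhnel's warped-product description together with Gauss--Bonnet to force $K\equiv 3\epsilon$.
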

\begin{proof}
   Let $\vec{H}_t = - H_t N_t$ define the mean curvature $H_t$ of $\Sigma_t$. The variation of $H_t$ is given by $\partial_t H_t = -\mathcal{L}_{\Sigma_t} V$. By Proposition \ref{propjacobi},
 \begin{equation*}
  \partial_{t}H_{t} = g(\nabla V, \vec{H}_t) - |A_t|^2V \leq - g(\nabla V, N_t)H_t.
 \end{equation*}
 \indent Since $H_0=0$, Gronwall's inequality implies that $H_{t} \leq 0$ for every $t\in[0,\delta)$. By the first variation formula of area for variations that fix the boundary $\partial \Sigma_t=\partial \Sigma_{0}$, this implies that $t\in [0,\delta) \mapsto |\Sigma_t|$ is non-increasing. \\
 \indent If $t\in[0,\delta)\mapsto|\Sigma_t|$ is constant, the previous analysis implies that each $\Sigma_t$ is totally geodesic. It follows that each $(\Sigma_t,g_{\Sigma_{t}})$ is isometric to $(\Sigma_0,g_{\Sigma_0})$. \\
 \indent Denote by $V_t$ the restriction of $V$ to $\Sigma_t$. Since each $\Sigma_t$ is totally geodesic, using the general formula for the variation of the second fundamental form of a surface $\Sigma$ under a normal variation with speed $\psi$,
 \begin{equation*}
  \partial_{t} A_{ij} = -\psi R_{iNjN}-(Hess_{\Sigma} \psi)_{ij} + \psi|A|^2,
 \end{equation*} 
 \noindent we deduce that on each $\Sigma_t$ we have
 \begin{equation*}
  Hess_{\Sigma_t} V_t (X,Y) = - g(R(X,N_t)N_t,Y) V_t
 \end{equation*}
 for every pair of vectors $X,Y\in T\Sigma_t$. Given an orthonormal basis $\{X,Y,N_t\}$ of tangent vectors at a point in the totally geodesic surface $\Sigma_t$, we have
 \begin{align*}
  \frac{1}{V_{t}}Hess_{\Sigma_t} V_t (X,X) = \frac{1}{V_t} Hess V_t(X,X)& = Ric(X,X) -3\epsilon \\
                                   & = sec(X\wedge Y) + sec(X\wedge N_t) - 3\epsilon \\
                                   & = K_{t} + g(R(X,N_t)N_t,X) - 3\epsilon \\
                                   & = (K_{t} -3\epsilon) - \frac{1}{V}Hess V_t(X,X).
 \end{align*}
 \indent Since $X\in T\Sigma_t$ is arbitrary, we conclude that $V_t$ is a non-negative smooth function on $\Sigma_t$ that satisfies
 \begin{equation} \label{eqaux}
  Hess_{\Sigma} V_t = \frac{1}{2}(K_t - 3\epsilon)V_tg \quad \text{and} \quad \text{$V_t=0$ on $\partial\Sigma_t$}.
 \end{equation} 
 \indent If $\partial \Sigma_t$ is empty, the existence of a positive solution to (\ref{eqaux}) implies $K_{t}=3\epsilon$ and $V_t$ constant. If $\partial \Sigma_t$ is non-empty, one must have $K=\epsilon =1$ and $V_t$ is a solution to the static equation on $(\Sigma_t,g_{\Sigma_t})$. See Appendix B for a proof of these facts. The last statements of $i)$ and $ii)$ follows by  a reparametrisation. 
\end{proof}

\indent The next proposition exemplifies how one can combine Proposition \ref{propflow} and the existence of area-minimizing surfaces to study the topology of a three-dimensional static triple in a quite effective way.  

\begin{prop} \label{propplateau}
 Let $(M^3,g,V)$ be a compact static triple with non-empty boundary. The inclusion map $i:\partial M \rightarrow M$ induces an injective map $i_{*}: \pi_{1}(\partial M) \rightarrow \pi_{1}(M)$ between the fundamental groups.
\end{prop}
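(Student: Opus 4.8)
The plan is to argue by contradiction, combining the Loop Theorem with the existence of a least-area disk and the rigidity of Proposition \ref{propflow}. Suppose $i_*$ is not injective. Then some component of $\partial M$ is compressible, so by the Loop Theorem there is a properly embedded disk $(D,\partial D)\hookrightarrow (M,\partial M)$ whose boundary $\gamma=\partial D$ is a simple closed curve representing a nontrivial element of $\pi_{1}(\partial M)$. Since $\gamma$ is null-homotopic in $M$, the solution of the Plateau problem due to Meeks and Yau produces an embedded least-area disk $\Sigma_{0}\subset M$ with $\partial\Sigma_{0}=\gamma$. Because $\partial M$ is totally geodesic, hence minimal, the strong maximum principle for minimal surfaces prevents the interior of $\Sigma_{0}$ from touching $\partial M$ (otherwise unique continuation would give $\Sigma_{0}\subset\partial M$, impossible as $\gamma$ is essential in $\partial M$); thus $\Sigma_{0}$ is a compact, properly embedded, two-sided minimal surface in $(M,g)$.

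Next I would apply Proposition \ref{propflow} to the normal flow $\Phi_{t}$ of speed $V$ issuing from $\Sigma_{0}$. Since $V$ vanishes on $\partial M$, the flow fixes $\gamma=\partial\Sigma_{0}$ pointwise, so every $\Sigma_{t}=\Phi_{t}(\Sigma_{0})$ is again an embedded disk with boundary $\gamma$; minimality of $\Sigma_{0}$ gives $|\Sigma_{t}|\geq|\Sigma_{0}|$, while Proposition \ref{propflow} gives that $t\mapsto|\Sigma_{t}|$ is non-increasing. Hence $|\Sigma_{t}|$ is constant, and part $(ii)$ of Proposition \ref{propflow} applies: after normalising $R=6$, the open set $W=\Phi([0,\delta)\times(\Sigma_{0}\setminus\partial\Sigma_{0}))\subset M\setminus\partial M$ carries the restriction of a metric of constant sectional curvature, so $\mathring{Ric}\equiv 0$ on $W$.

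Finally I would propagate this rigidity globally. A static metric is real-analytic in suitable coordinates (for instance, $g$ and $V$ are encoded in the smooth Einstein metric $h=V^{2}d\theta^{2}+g$ on $\Omega=\mathcal{N}\setminus\partial M$ of Section 6, which is real-analytic), so from $\mathring{Ric}\equiv 0$ on the open set $W$ one gets $\mathring{Ric}\equiv 0$ on $M\setminus\partial M$ by analytic continuation, and on $M$ by continuity. Then $(M,g)$ is Einstein, in particular locally conformally flat, and by Theorem \ref{thmconfflat} the triple $(M,g,V)$ is covered by the standard hemisphere, the standard cylinder, or a Schwarzschild--de Sitter space. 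The last two are not Einstein, so $(M,g,V)$ is covered by a static triple equivalent to the standard hemisphere; the deck group of this covering acts by isometries preserving the static potential, hence fixes its unique maximum point, hence is trivial. Therefore $(M,g,V)$ is equivalent to the standard hemisphere, $\partial M$ is a two-sphere, and $\pi_{1}(\partial M)$ is trivial --- contradicting that $i_{*}$ is not injective.

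The main obstacle is this last step: one must turn the purely local conclusion of Proposition \ref{propflow}$(ii)$ into a global one, and invoking real-analyticity of static metrics to spread the condition $\mathring{Ric}=0$ is what makes this clean; an alternative would be to extend the normal flow until it exhausts $M$, but that requires controlling the equidistant surfaces of $(\overline{M},\overline{g})$ past its injectivity radius. One should also take a little care, as indicated, that the Meeks--Yau least-area disk does not slip into the totally geodesic boundary.
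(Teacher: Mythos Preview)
Your argument is correct and opens exactly as the paper does: produce an embedded essential curve $\gamma\subset\partial M$ bounding a disk in $M$, solve the Plateau problem via Meeks--Yau, and apply the speed-$V$ flow together with Proposition~\ref{propflow}(ii). (Your invocation of the Loop Theorem to guarantee an \emph{embedded} $\gamma$ is in fact more careful than the paper's ``we may assume $\Gamma$ is a smooth embedded closed curve''.) The difference is entirely in how you globalize the local conclusion of Proposition~\ref{propflow}(ii). The paper does not use analyticity or Theorem~\ref{thmconfflat}: it observes that each $\Sigma_t$ is itself a Plateau solution for $\gamma$, so by compactness of Plateau solutions the flow can be restarted at the limit and hence continued for all time; run in both normal directions this produces an isometric embedding of $S^3_+\setminus\partial S^3_+$ onto $M\setminus\partial M$, forcing $M\cong S^3_+$ directly. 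Your route---$\mathring{Ric}=0$ on an open set, real-analyticity of the Einstein metric $h=V^2d\theta^2+g$ (DeTurck--Kazdan) giving $\mathring{Ric}\equiv 0$ on $M$, then Theorem~\ref{thmconfflat} and the free action of the deck group versus the unique maximum of the potential---cleanly sidesteps the flow-continuation issues you yourself flag, at the price of importing analyticity of Einstein metrics and the Kobayashi--Lafontaine classification as outside ingredients.
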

\begin{proof}
 Let $[\Gamma]\in \pi_{1}(\partial M)$ be such that $i_{*}[\Gamma] = 0$ in $\pi_{1}(M)$. We may assume that $\Gamma$ is a smooth embedded closed curve. We want to show that $\Gamma$ bounds a disk in $\partial M$. \\
 \indent Since $\Gamma$ bounds a disk in $M$ and $(M^3,g)$ is a compact mean convex three-manifold, there exists a solution $\Sigma_0$ to the Plateau problem for this given $\Gamma$ \cite{MeeYau}, i.e., $\Sigma_0$ is an disk in $M$ with $\partial \Sigma = \Gamma$ such that 
 \begin{equation*}
  |\Sigma_{0}| = \inf \{|\Sigma|;\, \text{$\Sigma$ immersed disk in $(M^3,g)$ with $\partial \Sigma = \Gamma$}  \}.
 \end{equation*}
 \indent In \cite{MeeYau} it is also shown that any solution is an embedded minimal disk $\Sigma$, with boundary $\partial \Sigma =\Gamma$, which is either contained in $\partial M$ or properly embedded in $M$. In the first case $[\Gamma]=0$ in $\pi_{1}(\partial M)$. Therefore it remains to be analysed the case where $\Sigma_0$ is a properly embedded disk in $M$. \\
 \indent Since $V$ vanishes on $\Gamma \subset \partial M$, the flow (\ref{flow}), $\{\Sigma_t\}_{t\in [0,\delta)}$, with normal speed $V$ starting at $\Sigma_0$, is such that $\partial \Sigma_t = \Gamma$ for all $t\in[0,\delta)$. Thus, $|\Sigma_t|\geq |\Sigma_0|$, as $\Sigma_0$ is a solution to the Plateau problem. By Proposition \ref{propflow}, the opposite inequality also holds. Thus, each $\Sigma_t$ is also a solution to the minimization problem we are considering and the statement $ii)$ in Proposition \ref{propflow} holds true. \\
 \indent To finish the proof, one uses a continuation argument. Let $T>0$ be the maximal time in which the flow (\ref{flow}) exists and is smooth. By the above argument, we are in the situation described by Proposition \ref{propflow}, item $ii)$. We claim that $T=+\infty$. In fact, the surfaces $\Sigma_t\setminus\partial \Sigma_t$ never touch the boundary of $M$ in finite time (for it would imply that $(M\setminus \partial M,V^{-2}g)$ is incomplete) and, if $T<+\infty$, a sequence $\Sigma_{t_{i}}$ with $t_{i}\rightarrow T $ would be a sequence of Plateau solutions converging to another solution to the Plateau problem for $\Gamma$ (which is smooth and properly embedded by \cite{MeeYau}) and it would be possible to continue the flow beyond $T$.  \\ 
 \indent Now we do the same argument flowing in the direction of the opposite normal. In the end, we have obtained an isometric embedding of $(S^{3}_{+}\setminus\partial S^{3}_{+},g_{can})$ in $(M\setminus \partial M,g)$. In particular, $M$ is diffeomorphic to $S^3_{+}$. And in this case, obviously $[\Gamma]=0$ in $\pi_{1}(\partial M)$, as we wanted to prove. 
\end{proof}
\begin{rmk}
 There are oriented static triples $(M^3,g,V)$ with negative scalar curvature and non-spherical boundary components, see \cite{LeeNev}. It is interesting to observe that essentially the same proof as above gives a similar result for the non-compact static triples considered in that paper.
\end{rmk}
\indent We have now all ingredients to prove Theorem B.
\begin{thm}
 Let $(M^3,g,V)$ be a compact oriented static triple with positive scalar curvature. Then
 \begin{itemize}
  \item[$i)$] The universal cover of $M$ is compact.
  \item[$ii)$] If $\partial M$ contains an unstable component, then $\partial M$ contains exactly one unstable component. In this case, $M$ is simply connected.
  \item[$iii)$] Each connected component of $\partial M$ is diffeormorphic to a sphere.
  \end{itemize}
\end{thm}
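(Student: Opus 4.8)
The plan is to derive item $iii)$ from item $i)$, to prove $i)$ through a Bonnet--Myers theorem for the singular Einstein four-manifold $(\mathcal N^4,h)$ of Section 6, and to prove $ii)$ by minimal surface methods in the spirit of Propositions \ref{propflow} and \ref{propplateau}. Item $iii)$ is immediate once $i)$ is known: then $\pi_1(M)$ is finite, and by Proposition \ref{propplateau} the inclusion induces an injection $\pi_1(\partial_i M)\hookrightarrow\pi_1(M)$ for each connected component $\partial_i M$ of $\partial M$, so each $\pi_1(\partial_i M)$ is finite; since $M$ is oriented, $\partial_i M$ is a closed orientable surface, and the only such surface with finite fundamental group is $S^2$.

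For item $i)$ I would first rescale the static potential so that $\max_i k_i\le 1$ and then form the associated singular Einstein manifold $(\mathcal N^4,h)$ of Section 6: $h$ is Einstein with $Ric_h=3h$ on the regular part, and has edge--cone singularities of cone angle $2\pi k_i\le 2\pi$ along the codimension-two stratum $\partial M\subset\mathcal N$. A Van Kampen computation — a generic loop and a generic homotopy in the four-manifold $\mathcal N$ avoid the codimension-two singular set, while the $S^1$-orbit bounds a disc $B_1^2\times\{x\}$ near $\partial M$ and is therefore null-homotopic — shows that $\phi_\theta$ induces an isomorphism $\pi_1(M)\cong\pi_1(\mathcal N)$, so it suffices to prove that $\pi_1(\mathcal N)$ is finite. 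The key step is a Bonnet--Myers theorem for the edge metric: a complete length space that is smoothly Einstein with $Ric\ge 3$ away from a codimension-two edge locus with all cone angles $\le 2\pi$ has diameter at most $\pi$ and finite fundamental group. The classical proof should adapt, because conical directions of angle $\le 2\pi$ concentrate non-negative curvature, so that a distance-minimizing geodesic either avoids the edge or meets it in a way that only makes the second variation of arclength more negative; alternatively one approximates $h$ near $\partial M$ by smooth metrics with $Ric\ge 3-\varepsilon$ (adding positive curvature near the edge, which is possible precisely because the cone angles are $\le 2\pi$) while controlling the diameter. Applying this to $\mathcal N$ and to its universal cover, which is again a complete edge space with the same local model and hence compact, we get that $\pi_1(\mathcal N)$, and therefore $\pi_1(M)$, is finite; the universal cover of $M$ is then a finite cover of the compact manifold $M$.

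For item $ii)$ the starting point, from Proposition \ref{propjacobi}, is that any closed minimal surface $S\subset M\setminus\partial M$ satisfies $\mathcal L_S V=|A|^2V\ge 0$ with $V>0$ on $S$, so the lowest eigenvalue of $-\mathcal L_S$ is $\le 0$, with equality precisely when $S$ is totally geodesic, in which case Proposition \ref{propflow} $i)$ applies. Suppose $\partial_1 M$ and $\partial_2 M$ were both unstable. Flowing each a short time into $M\setminus\partial M$ with normal speed a positive first eigenfunction of its Jacobi operator, the variation formula for the mean curvature shows that the flowed surfaces $\Sigma_1,\Sigma_2$ become strictly mean-convex, with mean curvature vector pointing into the region they cobound with the remaining boundary components, hence are genuine barriers. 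A barrier-plus-minimization argument adapted to $\Sigma_1,\Sigma_2$ — minimizing area in the class $[\partial_1 M]\in H_2(M;\mathbb{Z})$, which is represented away from $\partial_1 M$ and $\partial_2 M$ since $\partial M$ bounds $M$, or minimizing in the isotopy class of $\Sigma_1$, or running mean curvature flow from $\Sigma_1$ — then yields a stable minimal surface $S_*\subset M\setminus\partial M$: the strict maximum principle keeps $S_*$ off the barriers, and completeness of $(M\setminus\partial M,V^{-2}g)$ keeps it away from $\partial M$. By the first observation $S_*$ is totally geodesic, and Proposition \ref{propflow} $i)$ together with a continuation argument as in the proof of Proposition \ref{propplateau} — the flow with speed $V$ exists for all time, and the restriction of $V$ to the resulting product region solves $V''=-3V$ and vanishes at its ends, so the region closes up into $[0,\pi/\sqrt3]\times S^2$ — forces $M$ to be the standard cylinder over $S^2$. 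But the boundary spheres of that cylinder have Gaussian curvature $3$, hence stability operator $\Delta$ and lowest eigenvalue $0$: they are stable, contradicting the instability of $\partial_1 M$. So $\partial M$ has at most one unstable component. Finally, if $\partial_1 M$ is unstable, passing to the universal cover $\tilde M$ (compact by $i)$) one obtains a static triple whose boundary has $[\pi_1(M):\pi_1(\partial_1 M)]$ components lying over $\partial_1 M$, each covering $\partial_1 M$ and hence again unstable; the case just proved, applied to $\tilde M$, forces this index to be $1$, so $\pi_1(\partial_1 M)=\pi_1(M)$, which is finite by $i)$; an orientable closed surface with finite fundamental group is $S^2$, whence $\pi_1(M)=\pi_1(\partial_1 M)=1$.

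The main obstacle in $i)$ is the transfer of the Bonnet--Myers diameter bound and the finiteness of $\pi_1$ to edge metrics with conical singularities of angle $\le 2\pi$ along a codimension-two stratum, which needs either a careful synthetic comparison handling geodesics that meet the singular locus, or a controlled approximation by smooth metrics with Ricci curvature bounded below. In $ii)$ the delicate point is the minimization step: one must guarantee that the area-minimizing surface neither collapses onto a boundary component nor escapes to the conformal infinity $\{V=0\}$, so that the rigidity of Proposition \ref{propflow} $i)$ can be invoked, and then that the resulting local product structure genuinely propagates to a global splitting of $M$.
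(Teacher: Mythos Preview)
Your overall plan is sound and close in spirit to the paper's, but the paper organizes the three items differently and, most importantly, sidesteps the very technical issue you flag in $i)$.

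\textbf{Item $i)$.} You reduce to a Bonnet--Myers theorem for edge metrics with cone angles $\le 2\pi$, which you correctly identify as the hard step. The paper avoids this. It first observes, without any reference to $\mathcal N$, that the function $|\nabla V|^2+V^2$ attains its maximum on some boundary component $\partial_1 M$ (Proposition~\ref{propmaxprin}), and that this forces $K_{\partial_1 M}\ge 1$ (Proposition~\ref{propcurvbordo}); hence $\partial_1 M$ is a sphere. Then, working in the singular Einstein manifold $\tilde{\mathcal N}$ associated to the \emph{universal cover} $\tilde M$, one lifts $\partial_1 M$ to a compact sphere $\partial_1\tilde M\subset\tilde{\mathcal N}$ and bounds the distance from an arbitrary point to $\partial_1\tilde M$ by $\pi/2$ via the classical focal-point/second-variation estimate. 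The point is that a distance-realizing geodesic to $\partial_1\tilde M$ meets $\partial_1\tilde M$ orthogonally (Gauss' Lemma, Section~6), so the relevant variation lives entirely in the smooth Einstein region and no edge-comparison theory is needed. This gives compactness of $\tilde{\mathcal N}$, hence of $\tilde M$, directly. Your approach via $\pi_1(M)\cong\pi_1(\mathcal N)$ and a general edge Bonnet--Myers is not wrong, but it trades a one-line focal-point argument for a statement whose proof you leave open.

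\textbf{Item $ii)$.} The paper inverts your logic. Having passed to the simply connected cover via $i)$, it minimizes area in the class $[\partial_u M]\in H_2(M;\mathbb Z)$ and argues that the minimizer must lie \emph{entirely in} $\partial M$: any interior component could be flowed with speed $V$ to strictly smaller area by Proposition~\ref{propflow} (the cylinder case being ruled out a priori). Since the minimizer is stable, its components lie in the stable part of $\partial M$; a homology bookkeeping (as in \cite{LeeNev}, Lemma~3.3) then excludes a second unstable component. Your barrier version should also work, but the sentence ``completeness of $(M\setminus\partial M,V^{-2}g)$ keeps $S_*$ away from $\partial M$'' is not a justification: the minimization is in the $g$-metric, and nothing prevents $S_*$ from being a stable boundary component $\partial_j M$. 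You would then have to flow $\partial_j M$ and reach the product conclusion anyway --- which is exactly the paper's observation that the minimizer sits in $\partial M$.

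\textbf{Item $iii)$.} Your derivation from $i)$ and Proposition~\ref{propplateau} is actually more uniform than the paper's. The paper splits: stable components are spheres by Schoen--Yau, while for an unstable component one first invokes $ii)$ to get simple connectivity and then Proposition~\ref{propplateau}. Your route uses only that $\pi_1(M)$ is finite and that $\pi_1(\partial_iM)\hookrightarrow\pi_1(M)$, and is a nice simplification.
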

\begin{proof}
 \indent As usual, we may assume $R=6$. \\
 \indent $i)$ Since $M$ is compact, $|\nabla V| + V^2$ attains its maximum, necessarily at a boundary component $\partial_1 M$ (Proposition \ref{propmaxprin}). By Proposition \ref{propcurvbordo}, $\partial_1 M$ has Gaussian curvature $K\geq 1$ and therefore is diffeomorphic to a two-sphere. This implies that the boundary of the universal covering $\tilde{M}$ has also a component $\partial_1 \tilde{M}$ diffeomorphic to a two-sphere (in fact, every component covering $\partial_{1} M$ satisfies this property). Now, any point in the regular set of the associated singular Einstein manifold $(\mathcal{N}^4,h)$ can be joined to the compact set $\partial_{1}\tilde{M}$ by a geodesic minimizing the distance of $p$ to $\partial_{1} M$ and touching this hypersurface orthogonally. Estimating the distance to the first focal point as in the classical proof of Bonnet-Myers theorem, we conclude by the generalization of the Hopf-Rinow theorem to the singular space $(\tilde{\mathcal{N}^{4}},h)$ (\cite{Pfl}, section 2.4) that $\tilde{\mathcal{N}}$ must be compact, as any of its points must be at distance at most $\pi/2$ from the compact set $\partial_{1}\tilde{M}$.  \\
 \indent $ii)$ We apply the results of the Geometric Measure Theory \cite{FedFle} about the existence of minimizers of area inside an integral homology class. By $i)$, without loss of generality we can assume $M$ is simply connected and show that $\partial M$ has exactly one unstable component. $(M,g)$ cannot be the standard cylinder because it contains by hypothesis at least one unstable boundary component, $\partial_{u} M$. We remark also that all embedded closed surfaces in $M$ are two-sided (as $M$ is simply-connected) and that we can assume $[\partial_u M]\neq 0$ in $H^{2}(M;\mathbb{Z})$ (otherwise, $\partial M=\partial_{u} M$). Since the standard cylinder is ruled out, arguing as in Proposition \ref{propplateau} using \cite{FedFle} and Proposition \ref{propflow}, item $i)$, one shows that the minimization of area in the homology class of $\partial_u M$ produces a smooth embedded oriented stable minimal surface (possibly disconnected and with multiplicities) homologous to $\partial_u M$ that must be contained inside $\partial M$. Clearly, no connected component of such surface is contained in an unstable component of $\partial M$. Reasoning on the homology relations between the boundary components as in \cite{LeeNev}, Lemma 3.3, the existence of an unstable component other than $\partial_u M$ leads to a contradiction. \\
 \indent $iii)$ Each stable component of $\partial M$ is diffeomorphic to a sphere because $M$ is oriented and has positive scalar curvature \cite{SchYau}. On the other hand, if there is an unstable component of $\partial M$, $M$ must be simply connected by $ii)$. The conclusion follows from Proposition \ref{propplateau}. 
\end{proof}


\indent To conclude this section, we describe the two static triples that can be obtained from the standard cylinder. They are the only static quotients of the locally conformally flat examples described in Theorem \ref{thmconfflat}. \\
\indent Given
\begin{equation*}
 (M^3,g,V) = \left( \left[0,\frac{\pi}{\sqrt{3}}\right] \times S^2 , g_{prod}= dt^2 + \frac{1}{3} g_{can}, \frac{1}{\sqrt{3}} \sin(\sqrt{3} t) \right),
\end{equation*}
\noindent the maps 
\begin{equation*}
 A_1 : (t,x)\in M \mapsto (t,-x)\in M, \quad A_2 : (t,x)\in M \mapsto \left(\frac{\pi}{\sqrt{3}} - t,-x\right)\in M
\end{equation*}
\noindent are isometric involutions without fixed points such that $V\circ A_{i}=V$, $i=1,2$. Hence, the quotients $(M^3,g,V)/A_{i}$, $i=1,2$, are static triples. \\
\indent $A_1$ inverts orientation and the first quotient $M/A_{1}$ is diffeomorphic to $[0,\pi/\sqrt{3}]\times \mathbb{RP}^2$. On the other hand, $M/A_{2}$ is oriented, contains an embedded copy of $\mathbb{RP}^2$ and has a \textit{connected} boundary diffeomorphic to $S^2$ (in fact, $M/A_{2}$ is diffeomorphic to $\mathbb{RP}^3$ minus a ball). The associated Einstein manifolds are both isometric to the standard product 
\begin{equation*}
 (S^2\times \mathbb{RP}^2,\frac{1}{3}(g_{can}+ g_{can})),
\end{equation*}
\noindent but the corresponding $S^1$ actions are generated by rotations around a fixed axis of different factors of $S^2\times \mathbb{RP}^2$: $S^2$ in the case of $(M,g,V)/A_1$, $\mathbb{RP}^2$ in the case of $(M,g,V)/A_2$. Remark the subtle fact that, despite $M/A_2$ being orientable, the associated Einstein manifold to $M/A_2$ is non-orientable.

\section{Solution to a Yamabe type problem}

\indent Given $(M^3,g,V)$ a compact static triple with scalar curvature $6$, we consider $(\mathcal{N}^4,h)$ the associated singular Einstein manifold (as described in Section 6). As before, we denote by $\Omega$ the open dense subset of $\mathcal{N}$ where the Riemannian metric $h$ is defined, i.e., $\Omega = \mathcal{N}\setminus \partial M$.  \\ 
\indent The metric $h$ is Einstein with scalar curvature $12$. Let $|W^{+}|_{h}$ be the norm of self-dual part of the Weyl curvature tensor of $h$. As remarked before, $|W^{+}|_h=|\mathring{Ric}_{g}|_{g}$. \\
\indent We consider the equation
\begin{equation} \label{yamabetypeproblem}
 -\Delta_{h} u + \frac{1}{6}(12 - 2\sqrt{6}|W^{+}|_h)u = \frac{1}{6}\lambda u^3,
\end{equation}
\noindent where $\lambda$ is some constant. As showed in \cite{Gur}, the right hand side defines a second order differential operator that has some conformal covariance properties. To explain the geometric relevance of (\ref{yamabetypeproblem}), we observe that a positive function $u\in C^{2}(\Omega)$ that solves (\ref{yamabetypeproblem}) gives rise to a conformal metric $\tilde{h}=u^2h$ in $\Omega$ such that 
\begin{equation*}
 \tilde{R} - 2\sqrt{6}|\tilde{W}^{+}|_{\tilde{h}} = \lambda
\end{equation*}
\noindent is constant (see \cite{Gur}).  \\
\indent The aim of this section is to prove that, except in the case where $W^{+}=0$, there exists for some \textit{non-positive} $\lambda$ a mildly regular solution to (\ref{yamabetypeproblem}) in $(\mathcal{N}^4,h)$ that is invariant under the $S^1$ action on $\mathcal{N}$.
\begin{thm} \label{thmsolveyamabe}
 Let $(\mathcal{N}^4,h)$ be the associated singular Einstein manifold to a compact static triple with positive scalar curvature $(M^3,g,V)$ that is not equivalent to the standard hemisphere. \\
 \indent There exists a constant $\lambda\leq 0$ and a positive function $\phi\in W^{1,2}(\mathcal{N})\cap L^{\infty}(\mathcal{N})$, invariant under the $S^1$ action on $\mathcal{N}$, that is a weak solution to (\ref{yamabetypeproblem}). Moreover, $\phi\in C^{0,\mu}(\mathcal{N})\cap C^{2,\alpha}(\Omega)$ for some $\mu,\alpha\in(0,1)$ and $\phi$ is a strong solution to (\ref{yamabetypeproblem}) in $\Omega$.
\end{thm}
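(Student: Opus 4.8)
The plan is to follow the variational scheme of Gursky \cite{Gur}, carried out on the singular space $(\mathcal{N},h)$ and restricted to $S^1$-invariant functions. An $S^1$-invariant element of $W^{1,2}(\mathcal{N})$ is a function $u$ on $M$, and under this identification $\int_\mathcal{N}(|\nabla u|^2+u^2)\,d\mu_h = 2\pi\int_M(|\nabla u|^2+u^2)\,V\,d\mu_g$ and $\int_\mathcal{N}u^4\,d\mu_h = 2\pi\int_M u^4\,V\,d\mu_g$, while (\ref{yamabetypeproblem}) becomes the degenerate elliptic equation $-div(V\nabla u)+\tfrac V6(12-2\sqrt6\,|\mathring{Ric}|)u=\tfrac V6\lambda u^3$ on $M$. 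First I would let $\mathcal Y$ be the infimum, over nonzero $S^1$-invariant $u\in W^{1,2}(\mathcal{N})$, of the quotient $\mathcal F(u)=\big(\int_\mathcal{N}|\nabla u|^2+\tfrac16(12-2\sqrt6\,|W^{+}|_h)u^2\,d\mu_h\big)\big/\big(\int_\mathcal{N}u^4\,d\mu_h\big)^{1/2}$ and try to realise $\mathcal Y$ by a nonnegative $\phi$; by the principle of symmetric criticality such a $\phi$ weakly solves (\ref{yamabetypeproblem}) with $\lambda=6\mathcal Y$, and it then remains to prove $\lambda\le 0$ and to establish the regularity.

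For the existence of a minimizer, a minimizing sequence is bounded in $W^{1,2}(\mathcal{N})$, and the only obstruction to strong $L^4$-convergence is concentration. An $S^1$-invariant sequence cannot concentrate at a point of the regular part $\Omega$, since there the orbits are one-dimensional and, modulo the orbit, the problem descends to $M^3$ with the subcritical exponent $4$ (the critical one in dimension three being $6$); hence concentration can only occur along the fixed-point set $\partial M$, where by (\ref{conic}) the metric $h$ is, up to the lower-order term $E$, a cone metric whose cone angles $2\pi k_i$ are all $\le 2\pi$ after normalising $\sup_{\partial M}|\nabla V|=1$ (admissible since $\partial M$ is compact and the equation is invariant under $V\mapsto cV$). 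By the edge-Yamabe theory for cone angles $\le 2\pi$ \cite{AkuCarMaz1}, \cite{Mon}, a minimizing sequence for $\mathcal F$ then converges to a minimizer provided $\mathcal Y<Y(S^4)$, the Yamabe constant of the round four-sphere; and this strict inequality is cheap, since testing with $u\equiv 1$ together with the volume comparison $|\mathcal{N}|\le|S^4|$ (valid because $Ric_h=3h$ on $\Omega$ and the cone angles are $\le 2\pi$) gives $\mathcal Y\le\mathcal F(1)\le 2\,|\mathcal{N}|^{1/2}\le 2\,|S^4|^{1/2}=Y(S^4)/6<Y(S^4)$. The resulting nonnegative minimizer $\phi$ is positive throughout $\mathcal{N}$: once $\phi\in L^\infty$ (see below) the equation has bounded zeroth-order coefficient, so the Harnack inequality — valid across the codimension-two singular set — rules out zeros.

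The main difficulty is the sign $\lambda\le 0$, a rigidity statement of the kind proved by Gursky. If $|\mathring{Ric}_g|^2\le R^2/6$ everywhere, one can invoke Theorem A: since $(M,g,V)$ is not the hemisphere, it is then covered by the standard cylinder, where $|W^{+}|_h$ is constant with $12-2\sqrt6\,|W^{+}|_h=0$, so $\mathcal F(1)=0$ and $\lambda=0$. Otherwise, suppose for contradiction $\lambda>0$; then $\tilde h=\phi^2h$ satisfies $\tilde R-2\sqrt6\,|\tilde W^{+}|_{\tilde h}\equiv\lambda>0$. Since $h$ is Einstein, the Bochner formula (\ref{eqbochnersecao}) — which, as observed in Section 6, is precisely the Weitzenb\"ock formula for $W^{+}$ of $h$ — has a counterpart for the conformal metric $\tilde h$; combining it with the pointwise inequality $54(det A)^2\le|A|^6$ for trace-free symmetric $3\times3$ endomorphisms (the inequality used in the proof of Theorem A and underlying (\ref{ineqauto})) and Kato's inequality yields, as in \cite{Gur}, the weak differential inequality $\Delta_{\tilde h}|\tilde W^{+}|\ge\tfrac12(\tilde R-2\sqrt6\,|\tilde W^{+}|)|\tilde W^{+}|=\tfrac\lambda2|\tilde W^{+}|$ on $\mathcal{N}$. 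Integrating this over $\mathcal{N}$ — where one must check that the codimension-two edge $\partial M$ contributes no boundary term, which follows from the boundedness of $|\tilde W^{+}|_{\tilde h}$ and the integrability of its gradient near $\partial M$ — forces $|\tilde W^{+}|\equiv 0$, hence $\mathring{Ric}_g\equiv 0$; so $(M^3,g)$ has constant curvature and, by Theorem \ref{thmconfflat}, $(M^3,g,V)$ is the hemisphere, a contradiction. Therefore $\lambda=6\mathcal Y\le 0$.

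It remains to establish the regularity. Boundedness $\phi\in L^\infty(\mathcal{N})$ follows from a Moser iteration for the critical equation (\ref{yamabetypeproblem}), using only the Sobolev embedding $W^{1,2}(\mathcal{N})\hookrightarrow L^4(\mathcal{N})$ of the compact edge space. For H\"older continuity up to $\partial M$: an $S^1$-invariant function depends only on the Fermi coordinates $(s,x^2,x^3)$, and on such functions $-\Delta_h$ — the cone angle being irrelevant — coincides with a uniformly elliptic divergence-form operator having bounded (in fact H\"older, thanks to the structure of $E$) coefficients on the smooth four-manifold $\mathbb{R}^2\times\partial_i M$ in which $(s,\theta)$ serve as polar coordinates; since $\phi$, $|W^{+}|$ and $\lambda\phi^3$ are all $S^1$-invariant, $\phi$ is a weak solution there with $L^\infty$ data, so De Giorgi--Nash gives $\phi\in C^{0,\mu}(\mathcal{N})$ for some $\mu\in(0,1)$ — this is where the precise form of $E$ in (\ref{conic}) is needed, carried out in the regularity appendix. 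On $\Omega$, where $h$ is a genuine Einstein metric and the coefficients $12-2\sqrt6\,|W^{+}|$ and $\lambda\phi^3$ are at worst Lipschitz (the only loss of smoothness being at zeros of $\mathring{Ric}$), $W^{2,p}$-estimates followed by Schauder bootstrapping upgrade $\phi$ to $C^{2,\alpha}(\Omega)$, so that $\phi$ is a strong solution of (\ref{yamabetypeproblem}) on $\Omega$. I expect the genuinely hard step to be the third one: adapting Gursky's integral rigidity to the singular space, i.e.\ establishing the Weitzenb\"ock inequality for $W^{+}$ in the distributional sense on all of $\mathcal{N}$ and justifying the integration by parts without spurious contributions along the edge $\partial M$. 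The remaining ingredients — edge-Yamabe existence and the degenerate elliptic regularity — are technical but follow established lines.
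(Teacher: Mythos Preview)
Your argument for $\lambda\le 0$ has a genuine gap. You claim that the conformal metric $\tilde h=\phi^2h$ satisfies the Weitzenb\"ock inequality
\[
\Delta_{\tilde h}|\tilde W^{+}|\ge\tfrac12(\tilde R-2\sqrt6\,|\tilde W^{+}|_{\tilde h})\,|\tilde W^{+}|_{\tilde h},
\]
but this inequality for $W^{+}$ requires $\delta_{\tilde h}\tilde W^{+}=0$, which holds for Einstein metrics (by the second Bianchi identity) and \emph{not} for an arbitrary conformal deformation of one. Since $\tilde h$ has no reason to be Einstein, the displayed inequality is false in general and the contradiction you derive from it collapses. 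The vague phrase ``has a counterpart for the conformal metric $\tilde h$'' hides exactly this issue.

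The paper proceeds in the opposite order and by a different mechanism. It uses the Bochner formula (\ref{eqBochner}) on the \emph{original} Einstein metric $h$ --- where $\delta_h W^{+}=0$ does hold --- together with a refined Kato inequality (Proposition~\ref{propkato}) to manufacture a test function: essentially $\phi_\epsilon=(|\mathring{Ric}|^2+\epsilon)^{1/6}$ satisfies, after the Bochner/Kato manipulation, a differential inequality that forces $\mathcal W(\phi_\epsilon^{})\to$ something $\le 0$ as $\epsilon\to 0$ (Proposition~\ref{propyamabeconst}). Thus $\mathcal W^{M}(\mathcal N,[h])\le 0$ is established \emph{before} any minimizer is produced. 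This is exactly Gursky's strategy in \cite{Gur}, Proposition~3.4; the point is that the Weitzenb\"ock is applied to the Einstein representative, not to the unknown conformal one.

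Once $\mathcal W^{M}(\mathcal N,[h])\le 0$ is in hand, existence is immediate from \cite{AkuCarMaz1}: the local (modified) Yamabe constant of the edge space is strictly positive, so the global invariant is strictly below the local one, and the subcritical approximation scheme of \cite{AkuCarMaz1}, Theorem~1.12, yields a minimizer. Your separate existence argument via $\mathcal Y<Y(S^4)$ and a Bishop--Gromov volume bound is therefore unnecessary (and the correct threshold for edge spaces is the local Yamabe constant, not $Y(S^4)$, though for cone angles $\le 2\pi$ these may coincide). Your regularity discussion is broadly in line with the paper's, which appeals to \cite{AkuCarMaz2} for H\"older continuity across the edge and standard elliptic theory on $\Omega$.
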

\indent We remark that the Sobolev space $W^{1,2}(\mathcal{N})$ is the completion of $C_{0}^{\infty}(\Omega)$ with respect to the usual Sobolev norm. \\ 
\indent In order to deal with this modified Yamabe problem in the singular space $(\mathcal{N}^4,h)$, we apply the machinery developed by Akutagawa, Carron and Mazzeo in \cite{AkuCarMaz1} to solve the Yamabe problem in singular spaces. In fact, once we reformulate the problem considering the required $S^1$-invariance of the solution, the proof of Theorem \ref{thmsolveyamabe} is essentially the same proof of Theorems 1.12 and 1.15 in \cite{AkuCarMaz1}, combined with the regularity results of \cite{AkuCarMaz2}. \\

\indent As in the study of the classical Yamabe problem, one uses the variational approach. Consider the functional
\begin{equation*} 
 \mathcal{W}(\phi) = \frac{\int_{\mathcal{N}} |\nabla^h \phi|_{h}^2 + \frac{1}{6}(12-2\sqrt{6}|W^{+}|)\phi^2 d\mu_h}{(\int_{\mathcal{N}} \phi^{4}d\mu_h)^{1/2}}
\end{equation*}
\noindent defined for every function $\phi\in W^{1,2}(\mathcal{N})$, $\phi \neq 0$. This functional is well-defined because the usual Sobolev embedding $W^{1,2}(\mathcal{N}) \subset L^{4}(\mathcal{N})$ holds in $(\mathcal{N}^4,h)$ (see \cite{AkuCarMaz1}, section 2.2). It is straightforward to verify that $\mathcal{W}$ is bounded from bellow. \\
\indent Let $A$ denote the set consisting of functions in $W^{1,2}(\mathcal{N})$ that are invariant under the $S^1$ action on $(\mathcal{N}^4,h)$. If $\phi \in A$ is not identically zero, we have
\begin{equation*}
 \mathcal{W}(\phi) = (2\pi)^{1/2}\frac{\int_{M} (|\nabla^{g} \phi|_{g}^2 + \frac{1}{6}(12-2\sqrt{6}|\mathring{Ric}_{g}|_{g})\phi^2 )Vd\mu_g}{(\int_{M} \phi^{4} Vd\mu_g)^{1/2}}.
\end{equation*}
\indent Now we define
\begin{equation*} 
 \mathcal{W}^{M}(\mathcal{N},[h]) := \inf \{\mathcal{W}(\phi);\, \phi\in A, \phi\geq 0, \phi\neq 0\}.
\end{equation*}
\indent A non-negative function $\phi\in A$ attaining the infimum $\mathcal{W}^{M}(\mathcal{N},[h])$ has to be a weak solution of the following equation in $(M^3,g)$:
\begin{equation} \label{yamabetypeproblem1}
 -div_{g}(V\nabla^{g}\phi) + \frac{1}{6}(12-2\sqrt{6}|\mathring{Ric}_{g}|_{g})V\phi = \frac{1}{6} \mathcal{W}^{M}(\mathcal{N},[h])V\phi^3
\end{equation}
\indent This is precisely equation (\ref{yamabetypeproblem}) for the $S^1$-invariant function $\phi\in A$, where $\lambda=\mathcal{W}^{M}(\mathcal{N},[h])$. It is a quasi-linear degenerated elliptic equation (see Appendix C for more information about this type of degenerated equation).   \\
\indent We first argue that the Yamabe type invariant $\mathcal{W}^M(\mathcal{N},[h])$ is \textit{non-positive}. 
The next proposition contains the key inequality, which should be compared with the refined Kato inequality for Codazzi tensors \cite{Bour2}.
\begin{prop} \label{propkato}
 Let $(M^3,g,V)$ be a static triple. At a point $p\in M$ where $\mathring{Ric}$ does not vanishes,
 \begin{equation} \label{deskatoref}
  |\nabla |\mathring{Ric}||^2 \leq  \frac{3}{5}\left( |\nabla \mathring{Ric}|^2 + \frac{|C|^2}{2}\right).
 \end{equation}
\end{prop}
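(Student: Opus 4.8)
The plan is to read the inequality as a refined Kato inequality for $\mathring{Ric}$, whose failure to be a Codazzi tensor is measured exactly by the Cotton tensor. Set $T:=\mathring{Ric}$. Since the scalar curvature is constant, $T$ is symmetric, trace-free and divergence-free (because $2\,div(Ric)=dR=0$), and by the same computation the Cotton tensor is $C(X,Y,Z)=(\nabla_Z T)(X,Y)-(\nabla_Y T)(X,Z)$, i.e. $C_{ijk}=T_{ij;k}-T_{ik;j}$. The plan is to view $\nabla T$ as a $(0,3)$-tensor, symmetric in its last two slots, and split it into its totally symmetric part $S:=\mathrm{Sym}(\nabla T)$ and the remainder $U:=\nabla T-S$. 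Since $T$ is trace-free and divergence-free, $S$ is a totally symmetric, trace-free tensor, and since the totally symmetric summand of $\bigotimes^3 T^*M$ is orthogonal to its complement, $|\nabla T|^2=|S|^2+|U|^2$. A short computation using the cyclic identity $C_{abc}+C_{bca}+C_{cab}=0$ (already exploited in the proof of Lemma \ref{lemnormacott}) gives $U_{abc}=\tfrac13(C_{cba}+C_{bca})$, hence $|U|^2=\tfrac13|C|^2$, so that
\[
|\nabla T|^2=|S|^2+\tfrac13|C|^2 .
\]

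Next, I would fix a point $p$ with $T(p)\neq 0$ and choose an orthonormal frame $\{e_a\}$ at $p$ diagonalising $T$, say $T_{ab}=t_a\,\delta_{ab}$ with $\sum_a t_a=0$. Then $\nabla_a|T|=|T|^{-1}\langle\nabla_a T,T\rangle=|T|^{-1}\sum_b(\nabla_a T)_{bb}\,t_b$, and since $\sum_b t_b^2=|T|^2$, the Cauchy--Schwarz inequality gives
\[
|\nabla|T||^2\ \le\ \sum_{a,b}(\nabla_a T)_{bb}^2\ =:\ Q .
\]
Thus it suffices to establish the pointwise algebraic inequality $Q\le \tfrac35|S|^2+\tfrac32|U|^2=\tfrac35\big(|\nabla T|^2+\tfrac12|C|^2\big)$ at $p$.

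Two ingredients go into this. First, for a totally symmetric, trace-free $(0,3)$-tensor $S$ in dimension three one has the sharp estimate $\sum_{a,b}S_{abb}^2\le \tfrac35|S|^2$: writing $S_{111}=-(S_{122}+S_{133})$, and the analogous relations for $S_{222},S_{333}$, the elementary bound $(x+y)^2\le 2(x^2+y^2)$ controls the ``all-equal-index'' components $S_{aaa}$ by the ``two-equal-index'' components, and the claim follows after collecting terms (the remaining component $S_{123}$ only helps). Second, writing $(\nabla_a T)_{bb}=S_{abb}+U_{abb}$ with $U_{abb}=-\tfrac23 C_{bab}$, one must control the cross term $\sum_{a,b}S_{abb}C_{bab}$ together with the partial-trace term $\sum_{a,b}C_{bab}^2$ against $|S|^2$ and $|C|^2$ simultaneously. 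A direct Cauchy--Schwarz on the cross term turns out to be too lossy; instead I would regard $\tfrac35|S|^2+\tfrac32|U|^2-Q$ as a quadratic form on the (finite-dimensional) space of tensors in which $\nabla T$ ranges, equivalently parametrise all components of $\nabla T$ in the $T$-eigenframe subject to the trace constraints $\sum_b(\nabla_a T)_{bb}=0$ and $\sum_a(\nabla_a T)_{ab}=0$ and extremise, in the spirit of the inequality $54(det(\mathring{Ric}))^2\le|\mathring{Ric}|^6$ used in the proof of Theorem A. Granting its non-negativity, combining with the Cauchy--Schwarz step above yields $|\nabla|\mathring{Ric}||^2\le \tfrac35\big(|\nabla\mathring{Ric}|^2+\tfrac12|C|^2\big)$.

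The main obstacle is precisely this last quadratic-form inequality: the ``Codazzi part'' $S$ and the ``Cotton part'' $U$ of $\nabla T$ are not free to vary independently — both are built from a single $S^2_0 T^*M$-valued $1$-form constrained by the contracted second Bianchi identity — so the cross term cannot be bounded crudely and one genuinely has to diagonalise the form; it is here that the constant $3/5$ and the size $\tfrac12|C|^2$ of the correction are forced. I would carry out this (purely algebraic but somewhat lengthy) computation separately, in an appendix, and only quote its conclusion here.
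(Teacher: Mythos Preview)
Your reduction is correct as far as it goes: the decomposition $\nabla T=S+U$ is orthogonal, the identity $|U|^2=\tfrac13|C|^2$ holds (via the cyclic identity for $C$), and after the Cauchy--Schwarz step $|\nabla|T||^2\le Q:=\sum_{a,b}T_{bb;a}^2$ the Proposition is indeed equivalent to the pointwise quadratic-form inequality $Q\le\tfrac35|S|^2+\tfrac32|U|^2$. The sharp bound $\sum_{a,b}S_{abb}^2\le\tfrac35|S|^2$ for totally symmetric trace-free $3$-tensors is also correct and proved as you indicate. The gap is that you never establish the remaining inequality: you observe that crude Cauchy--Schwarz on the cross term $\sum S_{abb}U_{abb}$ fails and then defer the whole computation to an unwritten appendix. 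Since that quadratic-form bound is the entire content of the Proposition, the proof as written is incomplete.

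The paper's argument (Appendix~A) is different and sidesteps exactly this difficulty. Rather than splitting $\nabla T$ into $S$ and $U$ and then fighting the cross term, it stays in the $T$-eigenframe and uses the constraints $\sum_i T_{ii;A}=0$ and $\sum_i T_{iA;i}=0$ to express, for each fixed $A$, the three diagonal derivatives $T_{AA;A},T_{BB;A},T_{CC;A}$ in terms of two off-diagonal derivatives $a=T_{BA;B}$, $b=T_{CA;C}$ and a single Cotton component $c=C_{BBA}$. The elementary identity
\[
(a+b)^2+(a-c)^2+(b+c)^2+(a-b+c)^2=3(a^2+b^2+c^2)
\]
then gives $T_{AA;A}^2+T_{BB;A}^2+T_{CC;A}^2\le 3(a^2+b^2+c^2)$ immediately, and summing over $A$ yields $Q\le\tfrac32\big(\sum_{i\neq k}T_{ik;i}^2+\sum_{i\neq k}T_{ki;i}^2+\tfrac12\sum_{i\neq k}(C_{iik}^2+C_{iki}^2)\big)$. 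The right-hand side is bounded above by $\tfrac32(|\nabla T|^2-Q+\tfrac12|C|^2)$, whence $\tfrac52 Q\le\tfrac32(|\nabla T|^2+\tfrac12|C|^2)$, which is exactly your target inequality for $Q$. The point is that by never separating the symmetric and antisymmetric parts of $\nabla T$, the paper avoids the cross term entirely; the algebraic identity packages the ``Codazzi'' and ``Cotton'' contributions together from the start. If you wish to complete your own route, you will have to carry out the extremisation you allude to; but the paper's coordinate computation is both shorter and self-contained.
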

\indent The proof is postponed to the Appendix A, where it is shown more generally that a similar inequality holds true for every symmetric two-tensor that has constant trace and zero divergence. \\
\indent Following \cite{Gur}, Proposition 3.4, the idea to prove that $\mathcal{W}^{M}(N,[h])$ is non-positive is to manipulate the Bochner type formula (\ref{eqbochnersecao}) to get rid of the gradient terms using the refined inequality (\ref{deskatoref}). To a systematic recent use of this idea, we refer the reader to the beautiful paper \cite{BouCar}.
\begin{prop} \label{propyamabeconst}
 Let $(M^3,g,V)$ be a compact oriented static triple with scalar curvature $R=6$. If $\mathring{Ric}\neq 0$ on $M$, then  
 \begin{equation*}
  \inf_{\{\phi\in C^{\infty}(M);\,\phi > 0\}} \int_{M} \left(6|\nabla \phi|^2  + (12 - 2\sqrt{6}|\mathring{Ric}|)\phi^2 \right)Vd\mu_g \leq 0.
 \end{equation*}
\end{prop}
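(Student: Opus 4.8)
The plan is to produce a single explicit positive competitor realizing a non-positive value of the functional; as in Gursky's argument \cite{Gur}, the right one is $\phi=|\mathring{Ric}|^{1/3}$. First I would note that this is admissible: since $\mathring{Ric}$ is nowhere zero by hypothesis, $f:=|\mathring{Ric}|=\sqrt{|\mathring{Ric}|^{2}}$ is a smooth and strictly positive function on the compact manifold $M$, bounded between two positive constants, and hence so is every power $f^{t}$; moreover the refined Kato inequality of Proposition \ref{propkato} then holds at \emph{every} point of $M$.

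The first step is to turn the Bochner type formula (\ref{eqbochnersecao}) into a pointwise differential inequality. Taking $R=6$, estimating the curvature term from below by the algebraic inequality $54(\det\mathring{Ric})^{2}\le|\mathring{Ric}|^{6}$ used in the proof of Theorem A (which gives $18\det\mathring{Ric}\ge-\sqrt{6}\,f^{3}$), and estimating the gradient term from below by the Kato inequality (\ref{deskatoref}) (which gives $|\nabla\mathring{Ric}|^{2}+\tfrac12|C|^{2}\ge\tfrac53|\nabla f|^{2}$), one obtains
\begin{equation*}
 \tfrac12\,div\bigl(V\nabla f^{2}\bigr)\ \ge\ \tfrac53\,V|\nabla f|^{2}+\bigl(6-\sqrt{6}\,f\bigr)f^{2}\,V\qquad\text{on }M.
\end{equation*}

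The second step is to integrate a weighted form of this inequality. Fix $b>0$, multiply by the positive smooth weight $f^{2b-2}$, and integrate over $M$: after integration by parts (the boundary term vanishes because $V\equiv0$ on $\partial M$) the left-hand side becomes $-(2b-2)\int_{M}f^{2b-2}V|\nabla f|^{2}$, so that
\begin{equation*}
 \int_{M}\bigl(6-\sqrt{6}\,f\bigr)f^{2b}\,V\,d\mu\ \le\ -\Bigl(2b-\tfrac13\Bigr)\int_{M}f^{2b-2}\,V|\nabla f|^{2}\,d\mu.
\end{equation*}
On the other hand, for the competitor $\phi=f^{b}$ one has $6|\nabla\phi|^{2}+(12-2\sqrt{6}\,|\mathring{Ric}|)\phi^{2}=6b^{2}f^{2b-2}|\nabla f|^{2}+2(6-\sqrt{6}\,f)f^{2b}$; writing $I_{b}:=\int_{M}f^{2b-2}V|\nabla f|^{2}\,d\mu\ge0$ and using the preceding inequality,
\begin{equation*}
 \int_{M}\bigl(6|\nabla\phi|^{2}+(12-2\sqrt{6}\,|\mathring{Ric}|)\phi^{2}\bigr)V\,d\mu\ \le\ \bigl(6b^{2}-4b+\tfrac23\bigr)I_{b}\ =\ \tfrac23(3b-1)^{2}\,I_{b}.
\end{equation*}

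The proof is then finished by taking $b=\tfrac13$, the unique value at which $6b^{2}-4b+\tfrac23=\tfrac23(3b-1)^{2}$ vanishes: the function $\phi=f^{1/3}=|\mathring{Ric}|^{1/3}$ is admissible by the first paragraph, and the displayed bound shows the functional is $\le0$ for it, so its infimum is $\le0$. I expect nothing genuinely hard here once the competitor is found; the only points deserving care are the admissibility of $|\mathring{Ric}|^{1/3}$ — which is exactly where the hypothesis $\mathring{Ric}\neq0$ enters, both to make $f^{1/3}$ smooth and positive and to apply Proposition \ref{propkato} pointwise — and the elementary algebraic identity pinning down the exponent $1/3$ as the one that cancels the gradient terms; all the integrations by parts are routine, since everything in sight is smooth and bounded on the compact manifold $M$ and $V$ vanishes on $\partial M$.
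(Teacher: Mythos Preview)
Your computation is correct and is exactly the argument the paper runs, but you have misread the hypothesis. Here ``$\mathring{Ric}\neq 0$ on $M$'' means that $\mathring{Ric}$ is \emph{not identically zero}, not that it is nowhere vanishing. This is clear from how the proposition is used immediately afterwards: it is invoked for any compact static triple not equivalent to the standard hemisphere, which only yields $\mathring{Ric}\not\equiv 0$. Under the correct hypothesis your first paragraph collapses: $f=|\mathring{Ric}|$ need not be smooth or strictly positive, $f^{1/3}$ is then not an admissible competitor, and Proposition~\ref{propkato} is only asserted at points where $\mathring{Ric}\neq 0$.

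The paper repairs precisely this point and nothing else: it replaces $f$ by $\phi_\epsilon=(|\mathring{Ric}|^2+\epsilon)^{1/2}$, notes that $|\nabla\phi_\epsilon|^2\le|\nabla|\mathring{Ric}||^2$ where $\mathring{Ric}\neq 0$ while $\nabla\phi_\epsilon=0$ at zeros of $\mathring{Ric}$ (so the Kato bound $(\ref{deskatoref})$ holds everywhere for $\phi_\epsilon$), and then carries out your integration-by-parts with the smooth positive test function $\phi_\epsilon^{1/3}$. The same choice of exponent $p=1/3$ kills the gradient term, and the discrepancy between $\phi_\epsilon^{2}$ and $|\mathring{Ric}|^{2}$ produces an error of order $\epsilon^{1/3}$ that vanishes as $\epsilon\to 0$. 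So your approach is the paper's approach; the missing ingredient is this routine but non-optional regularization.
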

\begin{proof}
 For each $\epsilon > 0$, we consider a suitable power of the positive smooth function $\phi_{\epsilon} = (|\mathring{Ric}|^2+\epsilon)^{1/2}$ and take the limit as $\epsilon$ goes to zero. \\
 \indent By hypothesis, $\phi_\epsilon$ is not constant. Given $p>0$,
\begin{align}
  \nonumber  div(V \nabla(\phi^p_{\epsilon})) & = pdiv(\phi_{\epsilon}^{p-1}(V\nabla \phi_{\epsilon})) \\
  \nonumber                                   & = p\phi_{\epsilon}^{p-1}div(V \nabla \phi_{\epsilon}) + p(p-1)V\phi_{\epsilon}^{p-2}|\nabla \phi_{\epsilon}|^2  \\
  \nonumber                        & = p\phi_{\epsilon}^{p-2}(\phi_{\epsilon}div(V \nabla \phi_{\epsilon}) + (p-1)V|\nabla \phi_{\epsilon}|^2) \\
                                 & = p\phi_{\epsilon}^{p-2}\left(div\left(\frac{V\nabla(\phi_{\epsilon}^2)}{2}\right) + (p-2)V|\nabla \phi_{\epsilon}|^2 \right). \label{eqlaplacep}
\end{align}
\indent Since $\nabla(|\mathring{Ric}|^2) = \nabla (\phi_{\epsilon}^2)=2\phi_{\epsilon}\nabla {\phi_{\epsilon}} $, at a point where $|\mathring{Ric}|\neq 0$ we have
\begin{equation*}
 |\nabla \phi_{\epsilon}|^2 = \frac{1}{4\phi_{\epsilon}^2} |\nabla |\mathring{Ric}|^2|^2 = \frac{|\mathring{Ric}|^2}{|\mathring{Ric}|^2+\epsilon}|\nabla |\mathring{Ric}||^2 \leq |\nabla |\mathring{Ric}||^2.
\end{equation*} 
\indent By Proposition \ref{propkato}, we conclude that
\begin{equation} \label{desaux}
 |\nabla \phi_{\epsilon}|^2 \leq \frac{3}{5}\left(|\nabla \mathring{Ric}|^2 +\frac{|C|^2}{2}\right).
\end{equation}
\indent Inequality (\ref{desaux}) holds even at a point where $\mathring{Ric}=0$, since at such point $\phi_{\epsilon}$ attains a minimum. Combining (\ref{eqlaplacep}), the Bochner type formula (\ref{eqbochnersecao}) and inequality (\ref{desaux}), we obtain
\begin{equation*}
 div(V \nabla(\phi^p_{\epsilon})) \geq p\phi_{\epsilon}^{p-2}V\left(6|\mathring{Ric}|^2 + 18det((\mathring{Ric})^3 + \left(\frac{5}{3} + p - 2\right)|\nabla\phi_{\epsilon}|^2\right). 
\end{equation*}
 \indent Choosing $p=1/3$, the last term vanishes. Recalling inequality (\ref{ineqauto}), we finally obtain
\begin{equation}
 div(V \nabla(\phi^{1/3}_{\epsilon})) \geq \frac{1}{3}V(6 - \sqrt{6}|\mathring{Ric}|)|\mathring{Ric}|^2\phi_{\epsilon}^{-5/3}. \label{desaux2}
\end{equation}
 \indent Multiplying (\ref{desaux2}) by the $\phi_{\epsilon}^{1/3}\in C^{\infty}(M)$ and integrating by parts, we have
 \begin{equation*} 
  \int_{M} (3|\nabla (\phi_{\epsilon}^{1/3})|^2 + (6 - \sqrt{6}|\mathring{Ric}|)|\mathring{Ric}|^2\phi_{\epsilon}^{-4/3}) Vd\mu_g  \leq 0.
 \end{equation*}
 \indent Hence, for all $\epsilon > 0$,
 \begin{multline*}
  \int_{M} (3|\nabla (\phi_{\epsilon}^{1/3})|^2 + (6 - \sqrt{6}|\mathring{Ric}|)(\phi_{\epsilon}^{1/3})^2 Vd\mu_g  \\ \leq \int_{M} (6 - \sqrt{6}|\mathring{Ric}|)\phi_{\epsilon}^{-4/3}(\phi_{\epsilon}^2-|\mathring{Ric}|^2) Vd\mu_g\leq C \epsilon^{1/3}
 \end{multline*}
 \noindent for some constant $C>0$. The proposition follows.
\end{proof}

\indent Proposition \ref{propyamabeconst} implies that 
\begin{equation*}
 \mathcal{W}^{M}(\mathcal{N},[h]) \leq 0
\end{equation*}
\noindent when the compact static triple $(M^3,g,V)$ is not equivalent to the standard hemisphere. \\
\indent Now, following \cite{AkuCarMaz1}, for each open set $\mathcal U\subset \mathcal{N}$ we set
\begin{align*}
 \mathcal{S}(\mathcal{U}) & = \inf\{\int_{\mathcal{N}} |\nabla^{h}\phi|_{h}^2 d\mu_{h};\, \phi\in W_{0}^{1,2}(\mathcal{U}\cap\Omega), \|\phi\|^2_{4}=1\}, \\
 \mathcal{W}(\mathcal{U}) &= \inf\{\mathcal{W}(\phi);\, \phi\in W_{0}^{1,2}(\mathcal{U}\cap\Omega), \|\phi\|^2_{4}=1\},
\end{align*}
and define the local Sobolev and the local (modified) Yamabe constant by 
\begin{equation*}
  \mathcal{S}_{\ell}(\mathcal{N},h) = \inf_{p\in\mathcal{N}} \lim_{r\rightarrow 0} \mathcal{S}(B(p,r)) \quad \text{and} \quad  \mathcal{W}_{\ell}(\mathcal{N},[h])= \inf_{p\in \mathcal{N}} \lim_{r\rightarrow 0}\mathcal{W}(B(p,r)).
\end{equation*}
\indent Accordingly to \cite{AkuCarMaz1},
\begin{equation*}
 \mathcal{S}_{\ell}(\mathcal{N},h)=\mathcal{W}_{\ell}(\mathcal{N},[h])>0,
\end{equation*} 
 \noindent see Lemma 1.3, Proposition 1.4 and Theorem 2.2 in \cite{AkuCarMaz1}. The explicit value of the local Sobolev constant of an edge space like $(\mathcal{N}^4,h)$ was computed in \cite{Mon}. At this point, it may be useful to remark that $R_{h}-2\sqrt{6}|W^{+}|_{h}$ is a bounded function on $\mathcal{N}$ (in fact, it is a Lipschitz function) so that all conditions $iv)$  $a)$, $b)$, $c)$ described in \cite{AkuCarMaz1} for $Scal_g$ (their notation) are satisfied by $R_{h}-2\sqrt{6}|W^{+}|_{h}$. It is straightforward to verify that their arguments apply for such modified potential instead of $Scal_g$. \\
 
\indent With this information at hands, we now sketch the proof of Theorem \ref{thmsolveyamabe} (see \cite{AkuCarMaz1}, proof of Theorem 1.12). The idea to obtain a solution to (\ref{yamabetypeproblem1}) is first to produce minimizers $\phi_{p}\in A$ of the modified functional 
 \begin{equation*}
  \mu_{p}(\phi) = \frac{\int_{\mathcal{N}} |\nabla^h \phi|_{h}^2 + \frac{1}{6}(12-2\sqrt{6}|W^{+}|)\phi^2 d\mu_h}{(\int_{\mathcal{N}} \phi^p d\mu_{h} )^{2/p}}
 \end{equation*}
\noindent in the space of non-negative functions $\phi\in A$ such that $\|\phi\|_{L^{2p/(p-2)}(\mathcal{N})}=1$ by the direct method of the calculus of variations and then, using a priori estimates,  obtain a sequence ${\phi_{p_i}}$ with $p_i\rightarrow 4$ converging to a function $\phi\in A$ such that $\mu_{4}(\phi) = \mathcal{W}^{M}(\mathcal{N},[h])$. \\
\indent In \cite{AkuCarMaz1} it was shown that this approach works if the local Yamabe invariant is \textit{strictly greater} that $\mathcal{W}^{M}(\mathcal{N},h)$. Since we proved $\mathcal{W}^{M}(\mathcal{N},h)$ $\leq 0$, the proof of Theorem 1.12 in \cite{AkuCarMaz1} applies. \\
\indent The conclusion is that there exists a weak solution $\phi\in W^{1,2}(\mathcal{N})\cap L^{\infty}(\mathcal{N})$ to (\ref{yamabetypeproblem1}) that is non-negative and invariant under the $S^1$-action. The positiveness of  $\phi$ follows by \cite{AkuCarMaz1}, Proposition 1.15. Concerning the regularity of the solution, the $C^{2,\alpha}$ regularity on $\Omega$ follows by standard results for quasi-linear elliptic equations \cite{GilTru} and the H\"older continuity on $\mathcal{N}$ follows by \cite{AkuCarMaz2}. \\

\indent We conclude this section with a remark concerning the Yamabe problem on $(\mathcal{N}^4,h)$. We claim that the Yamabe constant of $(\mathcal{N}^4,h)$,
\begin{equation*}
 \mathcal{Y}(\mathcal{N},[h]) = \inf_{\{\psi\in W^{1,2}(\mathcal{N});\,\psi\neq 0\}} \frac{\int_{\mathcal{N}} |\nabla^{h}\psi|^2 + 2\psi^2d\mu_{h}}{(\int_{\mathcal{N}} \psi^4d\mu_{h})^{1/2}},
\end{equation*}
\noindent is positive. In fact, suppose by contradiction that $\mathcal{Y}(\mathcal{N},[h])\leq 0$. Since the local Sobolev constant of $(\mathcal{N}^4,h)$ is positive, Theorems 1.12 and 1.15 of \cite{AkuCarMaz1} implies the existence of a positive function $\psi_{0}\in L^{\infty}(\mathcal{N})\cap W^{1,2}(M)$ that satisfies $-\Delta_{h}\psi_{0}+2\psi_{0} = \mathcal{Y}(\mathcal{N},[h])\psi_{0}^3$ weakly in $\mathcal{N}$. Thus, $\psi_0>0$ satisfies an equation of the form $-\Delta_{h}\psi_{0} + G\psi_{0}=0$ for some positive function $G$ and we have $\int_{\mathcal{N}} |\nabla^{h}\psi_{0}|_{h}^2d\mu_{h}=-\int_{\mathcal{N}} G\psi_{0}^2d\mu_{h} < 0$, a contradiction. \\
\indent In particular, the solution $\phi$ obtained in Theorem \ref{thmsolveyamabe} is such that the integral of the scalar curvature of the metric $\hat{h}=\phi^2h$ is positive, since by the definition of the Yamabe constant, $\int_{\mathcal{N}} \hat{R} d\mu_{\hat{h}} \geq \mathcal{Y}(\mathcal{N},[h])\|\phi\|^2_{L^4(\mathcal{N})}>0$. This remark will be used in the next section (see Proposition \ref{propfund}).

\section{Proof of Theorem C}

\indent We start with two propositions that are the key results needed for the proof of Theorem C. The first one is a version of the Gauss-Bonnet-Chern formula.
\begin{lemm} \label{lemmgaussbonnetchern}
 Let $(\mathcal{N}^4,h)$ be an oriented associated singular Einstein manifold to a compact static triple $(M^3,g,V)$ with positive scalar curvature. \\
 \indent Given $u\in C^{4}(M)\cap C^{\infty}(int(M))$, consider the Riemannian metric $\tilde{h}=u^2h=u^2(V^2d\theta^2+g)$ on $\Omega$. Then the singular space $(\mathcal{N}^4,\tilde{h})$ is such that 
 \begin{equation} \label{eqgaussbonnetchern}
  \frac{1}{48}\int_{\mathcal{N}} \tilde{R}^2d\mu_{\tilde{h}} + \int_{\mathcal{N}}|\tilde{W}^{+}|_{\tilde{h}}^2d\mu_{\tilde{h}} = 8\pi^2\sum_{i=1}^{r}k_i + \frac{1}{4}\int_{\mathcal{N}}|\mathring{\widetilde{Ric}}|_{\tilde{h}}^2d\mu_{\tilde{h}},
 \end{equation}
 \noindent where $k_i$ denotes the constant value of $|\nabla V|$ on a connected component of $\partial M=\partial_{1} M\cup\ldots\cup\partial_{r} M$ for all $i=1,\ldots,r$.
\end{lemm}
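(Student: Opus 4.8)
\section*{Proof proposal}

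The plan is to read off (\ref{eqgaussbonnetchern}) from the Gauss--Bonnet--Chern formula for four\--dimensional edge\--cone manifolds applied to the singular space $(\mathcal{N}^{4},\tilde{h})$, feeding into it the special form of the Weyl curvature of $h$ recorded in Section 6 together with the topological information about $\mathcal{N}$ and $\partial M$ supplied by Section 6 and Theorem B.

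First I would collect the conformal facts. The Weyl tensor is conformally invariant as a $(1,3)$-tensor, and in dimension four the Hodge star acting on two\--forms is conformally invariant; hence the decomposition $\tilde{W}=\tilde{W}^{+}+\tilde{W}^{-}$ of the Weyl tensor of $\tilde{h}$ coincides, as $(1,3)$-tensors, with the decomposition $W=W^{+}+W^{-}$ of the Weyl tensor of $h$. Consequently $|\tilde{W}^{\pm}|_{\tilde{h}}^{2}\,d\mu_{\tilde{h}}=|W^{\pm}|_{h}^{2}\,d\mu_{h}$, and since Section 6 gives $|W^{+}|_{h}=|W^{-}|_{h}=|\mathring{Ric}_{g}|_{g}$, we obtain $|\tilde{W}^{+}|_{\tilde{h}}=|\tilde{W}^{-}|_{\tilde{h}}$ pointwise on $\Omega$, so $|\tilde{W}|_{\tilde{h}}^{2}=2|\tilde{W}^{+}|_{\tilde{h}}^{2}$ and $\int_{\mathcal{N}}|\tilde{W}^{\pm}|_{\tilde{h}}^{2}\,d\mu_{\tilde{h}}=2\pi\int_{M}|\mathring{Ric}_{g}|_{g}^{2}\,Vd\mu_{g}<\infty$.

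Next I would verify that $(\mathcal{N}^{4},\tilde{h})$ falls within the scope of the Gauss--Bonnet--Chern formula for singular Riemannian metrics of Liu--Shen \cite{LiuShe} (compare also \cite{AtiLeB}), with a conical singularity of cone angle $2\pi k_{i}$ along $\partial_{i}M$. By (\ref{conic}), near $\partial_{i}M$ one has $h=ds^{2}+k_{i}^{2}s^{2}\,d\theta^{2}+g_{\partial_{i}M}+E$ with $E$ of order $s^{2}$ and with the refined $s$-expansions described in Section 6; since $u$ is of class $C^{4}$ up to $\partial M$ and positive there, multiplying $h$ by $u^{2}$ only rescales the model flat cone and leaves its angle unchanged (pass to the radial coordinate $\rho$ with $d\rho=(u|_{\partial_{i}M})\,ds$ to leading order), so $\tilde{h}$ is again an edge metric with cone angle $2\pi k_{i}$ along $\partial_{i}M$ and the curvature quantities of $\tilde{h}$ entering the Pfaffian are integrable. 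With this in hand, the singular Gauss--Bonnet--Chern formula reads
\begin{equation*}
 8\pi^{2}\chi(\mathcal{N})=\int_{\mathcal{N}}\left(\frac{\tilde{R}^{2}}{24}+|\tilde{W}|_{\tilde{h}}^{2}-\frac{|\mathring{\widetilde{Ric}}|_{\tilde{h}}^{2}}{2}\right)d\mu_{\tilde{h}}+8\pi^{2}\sum_{i=1}^{r}(1-k_{i})\chi(\partial_{i}M),
\end{equation*}
where $|\tilde{W}|_{\tilde{h}}^{2}=|\tilde{W}^{+}|_{\tilde{h}}^{2}+|\tilde{W}^{-}|_{\tilde{h}}^{2}$.

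Finally I would insert the topology: by Section 6 the Killing field $X$ does not vanish off the singular set, so $\chi(\mathcal{N})=\chi(\partial M)=\sum_{i=1}^{r}\chi(\partial_{i}M)$, and by Theorem B each $\partial_{i}M$ is a two\--sphere, so $\chi(\partial_{i}M)=2$. The displayed identity then becomes $\int_{\mathcal{N}}(\tilde{R}^{2}/24+|\tilde{W}|_{\tilde{h}}^{2}-|\mathring{\widetilde{Ric}}|_{\tilde{h}}^{2}/2)\,d\mu_{\tilde{h}}=8\pi^{2}\sum_{i}k_{i}\chi(\partial_{i}M)=16\pi^{2}\sum_{i}k_{i}$; substituting $|\tilde{W}|_{\tilde{h}}^{2}=2|\tilde{W}^{+}|_{\tilde{h}}^{2}$ and dividing by $2$ yields exactly (\ref{eqgaussbonnetchern}). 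I expect the main obstacle to be the middle step: justifying rigorously that $\tilde{h}$ is an admissible edge metric in the sense required by \cite{LiuShe}, i.e.\ that the conformal factor $u$, of only finite regularity up to $\partial M$, neither destroys the edge\--cone structure near $\partial M$ nor spoils the convergence of the curvature integrals in the Pfaffian, and this rests on both the precise expansion of $h$ along $\partial M$ (Section 6) and the behaviour of $u$ near $\partial M$.
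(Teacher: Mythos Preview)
Your proposal is correct and follows essentially the same route as the paper: verify that $\tilde{h}$ is an edge metric in the sense of Liu--Shen, apply their singular Gauss--Bonnet--Chern formula, use $\chi(\mathcal{N})=\chi(\partial M)$, and invoke Theorem B to get $\chi(\partial_i M)=2$. The only minor difference is in how you argue $|\tilde{W}^{+}|_{\tilde{h}}=|\tilde{W}^{-}|_{\tilde{h}}$: you pass through conformal invariance to reduce to the explicit computation $|W^{+}|_{h}=|W^{-}|_{h}$ from Section 6, whereas the paper observes directly that $\theta\mapsto -\theta$ is an orientation-reversing isometry of $(\mathcal{N},\tilde{h})$ (since $u$ is $S^{1}$-invariant), which swaps $\tilde{W}^{+}$ and $\tilde{W}^{-}$.
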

\begin{proof}
\indent First, we study the behaviour of the metric $\tilde{h}=u^2h=u^2(V^2d\theta^2+g)$ near its singular set $\partial M$. In order to do this, we introduce Fermi coordinates on $M$ with respect to the metric $\tilde{g} = u^{2}g$. Since by hypothesis this is a $C^4$ metric on $M$, the normal exponential map of $\partial M$ is only of class $C^3$. \\
\indent Let $s$ and $\tilde{s}$ denote the distance function to the boundary $\partial M$ in $(M^3,g)$ and $(M^3,\tilde{g})$, respectively. Let $x=(x^2,x^3)$ be local coordinates in $\partial M$. We have $\tilde{s}(s,x)=u(0,x)s+O(s^2)$. Since $V(0,x)=0$ and $\partial_{s}V(0,x)=k_{i}$ at a point $(0,x)=(0,x^2,x^3)$ belonging to the connected component $\partial_i M$ (Proposition \ref{propexpansao}), we also have
\begin{equation*}
  (uV)(\tilde{s},x^2,x^3) = (u(0,x)+ O(\tilde{s}))(\partial_{\tilde{s}}V(0,x^2,x^3)\tilde{s}+O(\tilde{s}^2))= k_{i}\tilde{s}+O(\tilde{s}^2).
\end{equation*}
\indent Using the local coordinates $(x^{0}=\theta,x^1=\tilde{s},x^2,x^3)$ near $\partial_{i} M$, we conclude that $\tilde{h}=(uV)^2d\theta^2+\tilde{g}$ has the expansion
\begin{multline*}
 \tilde{h}_{ij}(\tilde{s},x^2,x^3)=d\tilde{s}^2 + k_{i}^{2} \tilde{s}^2 d\theta^2 + \sum_{i,j=2}^{3}\tilde{g}_{ij}(0,x^2,x^3)dx^i dx^j + \\ + 2\sum_{i,j=2}^{3}\tilde{A}_{ij}(0,x^2,x^3)\tilde{s}dx^i dx^j + \sum_{i,j=2}^{3} \tilde{G}_{ij}(0,x^2,x^3)\tilde{s}^2dx^idx^j + E,
\end{multline*}
\noindent where $\tilde{A}(0,x^2,x^3)$ is the second fundamental form of $\partial M$ in $(M^3,\tilde{g}=u^2g)$ at $x=(0,x^2,x^3)$. Moreover, the remainder $E$ satisfies $|E|\leq C\tilde{s}^3$ for some constant $C>0$ and $E(\partial_{\tilde{s}},\partial_{x^{A}})=0$ for all $A=0,1,2,3$. \\
\indent Thus, $\tilde{h}$ is singular in the sense of \cite{LiuShe}. Accordingly to \cite{LiuShe} (compare with \cite{AtiLeB}), a Gauss-Bonnet-Chern formula holds for the singular space $(\mathcal{N}^4,\tilde{h})$ and is given by
\begin{multline*}
 \frac{1}{48}\int_{\mathcal{N}} \tilde{R}^2d\mu_{\tilde{h}} + \frac{1}{8}\int_{\mathcal{N}}|\tilde{W}|_{\tilde{h}}^2d\mu_{\tilde{h}} - \frac{1}{4}\int_{\mathcal{N}}|\mathring{\widetilde{Ric}}|_{\tilde{h}}^2d\mu_{\tilde{h}}= \\ = 4\pi^2(\chi(\mathcal{N}) - \chi(\partial M) + \sum_{i=1}^{r} k_i\chi(\partial_{i} M)).
\end{multline*}
\indent In the above formula $|\tilde{W}|_{\tilde{h}}$, denotes the norm of the Weyl tensor $\tilde{W}$. Considering the self-dual/anti-self-dual decomposition $\tilde{W}^{+}+\tilde{W}^{-}$ of $\tilde{W}$ viewed as a symmetric endomorphism of the space of two-forms, we have $|\tilde{W}|^2_{\tilde{h}} = 4(|\tilde{W}^{+}|_{\tilde{h}}^2+|\tilde{W}^{-}|_{\tilde{h}}^2)$. \\
\indent As observed in Section $6$, $\mathcal{N}^4$ and the fixed point set of the $S^1$ action on $\mathcal{N}$, i.e, $\partial M$, have the same Euler characteristic. Moreover, since the map $\theta\mapsto -\theta$ is an orientation inverting isometry of $(\mathcal{N}^4,\tilde{h})$, $|\tilde{W}|_{\tilde{h}}^2=8|\tilde{W}^{+}|_{\tilde{h}}^2$. By Theorem B, $iii)$, each connected component of $\partial M$ is a two-sphere. Formula (\ref{eqgaussbonnetchern}) follows.
\end{proof}
\begin{rmk}
 The associated singular Einstein manifold $(\mathcal{N}^4,h=V^2d\theta^2+g)$ itself is such that
 \begin{equation*}
  \frac{1}{48}\int_{\mathcal{N}}R_{h}^2d\mu_{h}=2\pi\sum_{i=1}^{r}|\partial_i M|     \quad \text{and} \quad \int_{\mathcal{N}}|W^{+}|_{h}^2d\mu_{h}=2\pi\int_{M}|\mathring{Ric}_{g}|^2_{g}Vd\mu_{g}.
 \end{equation*}
\indent Thus, formula (\ref{eqgaussbonnetchern}) for $(\mathcal{N}^4,h)$ is precisely the fundamental formula (\ref{eqgaussbonnetchern1}) for $(M^3,g,V)$ (see Proposition \ref{propshenform}). This curious observation can be used to give a different proof of the above lemma using only the conformal invariance of the Weyl term in (\ref{eqgaussbonnetchern}) and the conformal transformations laws of the $Q$-curvature (for the definition and properties of the $Q$-curvature, see for example \cite{Gur2}). 
\end{rmk}
\indent The second key proposition is a consequence of the existence result Theorem \ref{thmsolveyamabe} (compare with \cite{Gur}, Proposition 3.5).  
\begin{prop} \label{propfund}
 Let $(\mathcal{N}^4,h)$ be the associated singular Einstein manifold to a compact static triple $(M^3,g,V)$ with positive scalar curvature. Assume that $(M^3,g,V)$ is not equivalent to the standard hemisphere. \\
 \indent Then
 \begin{equation*}
  \inf_{\substack{\tilde{h}=u^2h,\, u > 0 \\ u\in C^{\infty}(M)}} \int_{\mathcal{N}} \tilde{R}^2d\mu_{\tilde{h}} \leq 24\int_{\mathcal{N}}|W^{+}|^2_{h}d\mu_{h}.
 \end{equation*}
 \indent Moreover, assume the equality holds. Let $\phi$ be the solution to the modified Yamabe problem given in Theorem \ref{thmsolveyamabe}. Then $\phi\in C^{\infty}(M)$ and the metric $\hat{h}=\phi^{2}h$ on $\Omega$ is such that $\tilde{R} = 2\sqrt{6}|\tilde{W}^{+}|_{\tilde{h}}$ is a positive constant.
\end{prop}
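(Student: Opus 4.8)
\emph{Proof proposal.} The plan is to test the infimum against the conformal metric furnished by the modified Yamabe problem of Section~8. Let $\phi\in W^{1,2}(\mathcal{N})\cap L^{\infty}(\mathcal{N})$ be the solution provided by Theorem~\ref{thmsolveyamabe}, let $\lambda=\mathcal{W}^{M}(\mathcal{N},[h])\leq 0$ be the associated constant (Proposition~\ref{propyamabeconst} together with the discussion following it), and set $\hat{h}=\phi^{2}h$ on $\Omega$. By the conformal covariance recalled in Section~8 this change of metric is arranged precisely so that $\hat{R}-2\sqrt{6}\,|\hat{W}^{+}|_{\hat{h}}=\lambda$ on $\Omega$, i.e. $\hat{R}=\lambda+2\sqrt{6}\,|\hat{W}^{+}|_{\hat{h}}$ with $\lambda\leq 0$ (the integral $\int_{\mathcal{N}}\hat{R}^{2}\,d\mu_{\hat{h}}$ is finite because $\hat{R}=2\sqrt{6}\,\phi^{-2}|\mathring{Ric}_{g}|_{g}+\lambda$ is bounded on the compact space $\mathcal{N}$). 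I would multiply this identity by $\hat{R}$ and integrate over $\mathcal{N}$, getting $\int_{\mathcal{N}}\hat{R}^{2}\,d\mu_{\hat{h}}-2\sqrt{6}\int_{\mathcal{N}}\hat{R}\,|\hat{W}^{+}|_{\hat{h}}\,d\mu_{\hat{h}}=\lambda\int_{\mathcal{N}}\hat{R}\,d\mu_{\hat{h}}$, whose right-hand side is $\leq 0$ since $\lambda\leq 0$ and $\int_{\mathcal{N}}\hat{R}\,d\mu_{\hat{h}}>0$ --- the latter being exactly the positivity of the Yamabe constant $\mathcal{Y}(\mathcal{N},[h])$ established at the end of Section~8. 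Estimating the cross term by Cauchy--Schwarz and using the conformal invariance $\int_{\mathcal{N}}|\hat{W}^{+}|_{\hat{h}}^{2}\,d\mu_{\hat{h}}=\int_{\mathcal{N}}|W^{+}|_{h}^{2}\,d\mu_{h}$ then gives $\int_{\mathcal{N}}\hat{R}^{2}\,d\mu_{\hat{h}}\leq 24\int_{\mathcal{N}}|W^{+}|_{h}^{2}\,d\mu_{h}$, with equality forcing $\lambda\int_{\mathcal{N}}\hat{R}\,d\mu_{\hat{h}}=0$, hence $\lambda=0$.

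It remains to pass from $\hat{h}$ to competitors of the form $u^{2}h$ with $u\in C^{\infty}(M)$, $u>0$, and this is the main obstacle. It concerns the behaviour of $\phi$ at the singular edge $\partial M$: the function $\phi$ solves the degenerate quasi-linear elliptic equation (\ref{yamabetypeproblem1}), which degenerates along $\partial M=\{V=0\}$, and a priori $\phi$ is only known to be H\"older up to $\partial M$. Using the regularity analysis for this degenerate equation carried out in Appendix~C, I would produce smooth positive functions $u_{j}\in C^{\infty}(M)$ with $\tilde{h}_{j}=u_{j}^{2}h$ and $\int_{\mathcal{N}}\tilde{R}_{j}^{2}\,d\mu_{\tilde{h}_{j}}\to\int_{\mathcal{N}}\hat{R}^{2}\,d\mu_{\hat{h}}$, so that $\inf_{u}\int_{\mathcal{N}}\tilde{R}^{2}\,d\mu_{\tilde{h}}\leq\int_{\mathcal{N}}\hat{R}^{2}\,d\mu_{\hat{h}}\leq 24\int_{\mathcal{N}}|W^{+}|_{h}^{2}\,d\mu_{h}$, which is the first assertion.

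For the equality case, assume $\inf_{u}\int_{\mathcal{N}}\tilde{R}^{2}\,d\mu_{\tilde{h}}=24\int_{\mathcal{N}}|W^{+}|_{h}^{2}\,d\mu_{h}$. Since $\inf_{u}\int\tilde{R}^{2}\leq\int\hat{R}^{2}\leq 24\int|W^{+}|_{h}^{2}$, equality propagates throughout, forcing $\lambda=0$ and also showing that $\hat{h}$ realizes the infimum. With $\lambda=0$ the function $\phi$ solves the homogeneous equation $-\Delta_{h}\phi+\frac{1}{6}(12-2\sqrt{6}\,|W^{+}|_{h})\phi=0$, equivalently $-div_{g}(V\nabla_{g}\phi)+\frac{1}{6}(12-2\sqrt{6}\,|\mathring{Ric}_{g}|)V\phi=0$; for this cleaner equation the regularity theory of Appendix~C (an elliptic bootstrap adapted to the edge) upgrades $\phi$ to $C^{\infty}(M)$, so $\hat{h}=\phi^{2}h$ is a bona fide admissible competitor and therefore attains $\inf_{u}\int_{\mathcal{N}}\tilde{R}^{2}\,d\mu_{\tilde{h}}$. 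Being a minimizer within the conformal class $[h]$, $\hat{h}$ is a critical point of $u\mapsto\int_{\mathcal{N}}\tilde{R}^{2}\,d\mu_{\tilde{h}}$; computing the first variation along $u\mapsto u(1+t\psi)$ with $\psi\in C^{\infty}(M)$ supported in $M\setminus\partial M$ gives, up to a nonzero constant, $\int_{\mathcal{N}}\psi\,\Delta_{\hat{h}}\hat{R}\,d\mu_{\hat{h}}=0$, hence $\Delta_{\hat{h}}\hat{R}=0$ on $\Omega$. Since $\hat{R}=2\sqrt{6}\,\phi^{-2}|\mathring{Ric}_{g}|_{g}$ is continuous and bounded on the compact space $\mathcal{N}$, $\Omega$ is connected and $\partial M$ has codimension two, $\hat{R}$ must be constant on $\mathcal{N}$; it equals $\lambda+2\sqrt{6}\,|\hat{W}^{+}|_{\hat{h}}=2\sqrt{6}\,|\hat{W}^{+}|_{\hat{h}}\geq 0$, and $\int_{\mathcal{N}}\hat{R}\,d\mu_{\hat{h}}>0$ excludes $\hat{R}\equiv 0$, so $\hat{R}=2\sqrt{6}\,|\hat{W}^{+}|_{\hat{h}}$ is a positive constant, as claimed. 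Throughout, the real difficulty is the edge behaviour of $\phi$ --- the degenerate elliptic regularity --- which underlies both the approximation step of the first part and the smoothness of $\phi$ in the equality case, and which is treated in Appendix~C.
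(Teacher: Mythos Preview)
Your argument for the inequality is essentially the paper's: test with the metric $\hat h=\phi^{2}h$ coming from Theorem~\ref{thmsolveyamabe}, multiply $\hat R-2\sqrt{6}\,|\hat W^{+}|_{\hat h}=\lambda\le 0$ by $\hat R$, integrate, use Cauchy--Schwarz together with $\int_{\mathcal N}\hat R\,d\mu_{\hat h}>0$, and pass to smooth competitors by approximation. That part is fine.

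The equality case, however, has a genuine gap in the \emph{order} of your steps. You set $\lambda=0$ and then invoke Appendix~C on the linear equation
\[
-\Delta_{h}\phi+\tfrac{1}{6}\bigl(12-2\sqrt{6}\,|W^{+}|_{h}\bigr)\phi=0
\]
to conclude $\phi\in C^{\infty}(M)$, and only \emph{afterwards} run the first variation to get $\hat R$ constant. But the zero-order coefficient here is $12-2\sqrt{6}\,|\mathring{Ric}_{g}|_{g}$, which is in general only Lipschitz (it fails to be smooth at zeros of $\mathring{Ric}_{g}$); Appendix~C is written for the equation $-\Delta_{h}\phi+2\phi=2\phi^{3}$, whose coefficients are smooth (indeed constant), and its bootstrap cannot produce $C^{\infty}$ regularity from a merely Lipschitz potential, even in the interior of $M$. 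So you cannot yet place $\hat h$ among the $C^{\infty}(M)$ competitors, and your first-variation step is not available.

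The paper breaks this circularity the other way around. It observes that, by the approximation step, the infimum over $u\in C^{\infty}(M)$ equals the infimum over $u\in C^{0}(M)\cap C^{2,\alpha}(\mathrm{int}(M))$, a class to which $\phi$ already belongs. Equality then forces $\phi$ to \emph{minimize in that larger class}, and one runs the first variation with $v\in C^{\infty}_{0}(\mathrm{int}(M))$ against this minimizer to get $\hat R$ constant (Besse~4.67). Only now does $\phi$ satisfy the clean Yamabe equation $-6\,\mathrm{div}_{g}(V\nabla\phi)+12V\phi=\hat R\,V\phi^{3}$ with constant right-hand side, and it is to \emph{this} equation that Appendix~C applies to yield $\phi\in C^{\infty}(M)$. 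If you reorder your equality argument in this way, it goes through.
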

\begin{proof} By Theorem \ref{thmsolveyamabe}, there exists a Riemannian metric $\hat{h}=\phi^2h$ on $\Omega=\mathcal{N}\setminus\partial M$, where $\phi\in C^{0}(M)\cap C^{2,\alpha}(int(M))$ is positive, such that
 \begin{equation*}
  \hat{R} - 2\sqrt{6}|\hat{W}^{+}|_{\hat{h}}
 \end{equation*}
 \noindent is a constant $\lambda\leq 0$. Multiplying by $\hat{R}$ and integrating on $\mathcal{N}$, we obtain
 \begin{align*}
  \int_{\mathcal{N}} \hat{R}^2d\mu_{\hat{h}} & = 2\sqrt{6}\int_{\mathcal{N}}\hat{R}|\hat{W}^{+}|_{\hat{h}}d\mu_{\hat{h}} + \lambda \int_{\mathcal{N}} \hat{R}d\mu_{\tilde{h}} \\
              & \leq 2\sqrt{6}(\int_{\mathcal{N}}\hat{R}^2d\mu_{\hat{h}})^{1/2}(\int_{\mathcal{N}}|\hat{W}^{+}|_{\hat{h}}^2d\mu_{\hat{h}})^{1/2} + \lambda \int_{\mathcal{N}} \hat{R}d\mu_{\hat{h}}.
 \end{align*}
 \indent As remarked in the end of the previous section, $\int_{\mathcal{N}} \hat{R}d\mu_{\hat{h}} > 0$. Therefore
 \begin{equation*}
  \int_{\mathcal{N}} \hat{R}^2d\mu_{\hat{h}} \leq 24 \int_{\mathcal{N}}|\hat{W}^{+}|^2_{\hat{h}}d\mu_{\hat{h}} = \int_{\mathcal{N}} |W^{+}|_{h}^2d\mu_{h},
 \end{equation*}
 \noindent and equality holds if and only if $\lambda=0$. The last equality follows from the conformal invariance of $|W^{+}|_{h}^2d\mu_{h}$ in dimension four. \\
 \indent Since an approximation argument implies that
 \begin{equation*} \label{inequality}
  \inf_{\substack{\tilde{h}=u^2h,\, u > 0 \\ u\in C^{\infty}(M)}} \int_{\mathcal{N}} \tilde{R}^2d\mu_{\tilde{h}} = \nonumber \inf_{\substack{\tilde{h}=u^2h,\, u > 0 \\ u\in C^{0}(M)\cap C^{2,\alpha}(int(M))}} \int_{\mathcal{N}} \tilde{R}^2d\mu_{\tilde{h}},
 \end{equation*}
 \noindent we conclude that 
 \begin{equation*}
  \inf_{\substack{\tilde{h}=u^2h, u > 0 \\ u\in C^{\infty}(M)}} \int_{\mathcal{N}} \tilde{R}^2d\mu_{\tilde{h}} \leq \int_{\mathcal{N}} \hat{R}^2d\mu_{\hat{h}}\leq \int_{\mathcal{N}} |W^{+}|_{h}^2d\mu_{h}. 
 \end{equation*}
 \indent If equality holds, then $\hat{R}=2\sqrt{6}|\hat{W}^{+}|_{\hat{h}}$ and the infimum of the functional $\hat{h} \mapsto \int_{\mathcal{N}} \hat{R}^2d\mu_{\hat{h}}$ on the set $\{\hat{h}=\psi^2h; \,\psi\in C^{0}(M)\cap C^{2,\alpha}(int(M)),  \psi >0\}$ is attained by $\phi$. Considering variations $\hat{h}_{t}=(\phi+t\phi^{-1}v)^2h$, $v\in C_{0}^{\infty}(int(M))$, one can show that $\hat{R}$ must be constant (see \cite{Bes}, 4.67). Therefore $|\hat{W}^{+}|_{\hat{h}}$ is constant and $\phi$ satisfies the equation 
 \begin{equation*}
  -6div_{g}(V\phi) + 12V\phi = \hat{R}V\phi^3.
 \end{equation*}
 \indent In Appendix C, we argue that $\phi$ is smooth up to the boundary. The proposition follows. 
\end{proof}

\indent We have now all ingredients we need to prove Theorem C.

\begin{thm}
 Let $(M^3,g,V)$ be a compact simply connected static triple with scalar curvature $6$. One of the following alternatives holds:
 \begin{itemize}
  \item[$i)$] $(M^3,g,V)$ is equivalent to the standard hemisphere; or
  \item[$ii)$] $(M^3,g,V)$ is equivalent to the standard cylinder; or
  \item[$iii)$] Denoting by $\partial M=\partial_1 M \cup \ldots \cup \partial_r M$  the connected components of the boundary and by $k_{i}$ the value of $|\nabla V|$ on $\partial_{i} M$, the following inequality holds
  \begin{equation*}
   \sum_{i=1}^{r} k_{i} |\partial_i M| < \frac{4\pi}{3}\sum_{i=1}^{r} k_{i}.
  \end{equation*}
 \end{itemize}  
\end{thm}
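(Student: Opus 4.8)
The plan is to argue by contradiction: assume $(M^3,g,V)$ is equivalent to neither the standard hemisphere nor the standard cylinder, and deduce the strict inequality in $iii)$. The first step is to translate $iii)$ into a curvature statement about the associated singular Einstein manifold. By Theorem B each boundary component $\partial_i M$ is a two-sphere, so $\chi(\partial_i M)=2$ and Shen's formula (Proposition \ref{propshenform}) reads $\sum_i k_i|\partial_i M| + \int_M |\mathring{Ric}|^2 V\,d\mu_g = 4\pi\sum_i k_i$. Hence $iii)$ is equivalent to $\int_M|\mathring{Ric}|^2 V\,d\mu_g > \tfrac{8\pi}{3}\sum_i k_i$, and by the identities $|W^+|_h=|\mathring{Ric}_g|_g$ and $d\mu_h = V\,d\theta\wedge d\mu_g$ of Section 6, to $\int_{\mathcal N}|W^+|_h^2\,d\mu_h > \tfrac{16\pi^2}{3}\sum_i k_i$. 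Thus the goal becomes a sharp lower bound for the self-dual Weyl energy of $(\mathcal N,h)$.

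Second, I would run the main chain of inequalities. For any $\tilde h=u^2h$ with $u\in C^\infty(M)$, $u>0$, the Gauss--Bonnet--Chern formula of Lemma \ref{lemmgaussbonnetchern} together with the conformal invariance of the self-dual Weyl energy in dimension four gives $\tfrac1{48}\int_{\mathcal N}\tilde R^2 d\mu_{\tilde h} = 8\pi^2\sum_i k_i - \int_{\mathcal N}|W^+|_h^2 d\mu_h + \tfrac14\int_{\mathcal N}|\mathring{\widetilde{Ric}}|_{\tilde h}^2 d\mu_{\tilde h} \ge 8\pi^2\sum_i k_i - \int_{\mathcal N}|W^+|_h^2 d\mu_h$. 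Taking the infimum over $u$ and combining with Proposition \ref{propfund} (applicable since $(M,g,V)$ is not the hemisphere), namely $\inf_u\int_{\mathcal N}\tilde R^2 d\mu_{\tilde h}\le 24\int_{\mathcal N}|W^+|_h^2 d\mu_h$, one gets $48\big(8\pi^2\sum_i k_i - \int_{\mathcal N}|W^+|_h^2 d\mu_h\big)\le 24\int_{\mathcal N}|W^+|_h^2 d\mu_h$, i.e. $\int_{\mathcal N}|W^+|_h^2 d\mu_h \ge \tfrac{16\pi^2}{3}\sum_i k_i$. By the first step this is precisely the non-strict version of $iii)$; so it remains to analyse when equality occurs.

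Third, suppose $\int_{\mathcal N}|W^+|_h^2 d\mu_h = \tfrac{16\pi^2}{3}\sum_i k_i$, so that both inequalities above become equalities. Equality in Proposition \ref{propfund} furnishes a positive $\phi\in C^\infty(M)$, $S^1$-invariant, such that $\hat h=\phi^2h$ satisfies that $\hat R=2\sqrt6\,|\hat W^+|_{\hat h}$ is a positive constant; equality in the Gauss--Bonnet step forces $\mathring{\widehat{Ric}}=0$, i.e. $\hat h$ is Einstein on $\Omega$. Since $\hat h=(\phi V)^2d\theta^2+\phi^2g$, the Einstein equation $Ric_{\hat h}=\tfrac{\hat R}{4}\hat h$ is, by the converse of the Section 6 construction (the Ricci formulas there reduce precisely to the static equations for the base data), equivalent to $(M,\phi^2g,\phi V)$ being a static triple with positive scalar curvature $R_{\phi^2g}=\hat R/2$; note $\phi V\ge 0$ vanishes exactly on $\partial M$ and $\phi^2g$ is complete. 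The constant-curvature relation then gives $|\mathring{Ric}_{\phi^2g}|^2 = \hat R^2/24 = R_{\phi^2g}^2/6$. By Theorem A (its alternative $i)$ being excluded because $\mathring{Ric}_{\phi^2g}\ne 0$), $(M,\phi^2g,\phi V)$ is covered by a static triple equivalent to the standard cylinder, and since $M$ is simply connected it is itself equivalent to the standard cylinder.

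Finally I would upgrade this to $(M,g,V)$ being equivalent to the cylinder. Along each $\partial_i M$ one has $|\nabla_{\phi^2g}(\phi V)| = |\nabla_g V| = k_i$ (as $\phi V$ vanishes there), and one checks the edge structures of $h$ and $\hat h=\phi^2h$ along $\partial_i M$ are the same cone of angle $2\pi k_i$; in particular $h$ and $\hat h$ are conformal Einstein metrics on the same underlying (a priori singular) space obtained from a triple equivalent to the cylinder, whose model space is the \emph{smooth} product $(S^2\times S^2,\tfrac13(g_{can}+g_{can}))$ of Section 6. A rigidity argument---the classical Obata--Tashiro dichotomy (a nonconstant conformal factor between Einstein metrics forces the round sphere), together with the conformal rigidity of the cylinder segment, equivalently Gursky's rigidity theorem applied through the genuinely smooth model $S^2\times S^2\not\cong S^4$---then forces $\phi$ to be constant, so $g$ is homothetic to the cylinder metric and $V$ proportional to its static potential, contradicting our assumption. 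Hence the strict inequality $iii)$ holds. I expect this last step to be the main obstacle: justifying the converse of the Section 6 construction in the distributional/edge sense, controlling the singular behaviour of $\phi$ and $\hat h$ up to $\partial M$, and making the Obata--Tashiro / Gursky rigidity rigorous on the possibly-singular space $(\mathcal N,h)$ (reducing it to the smooth $S^2\times S^2$) are where the real work lies; the inequality chain itself is essentially formal given Lemma \ref{lemmgaussbonnetchern} and Proposition \ref{propfund}.
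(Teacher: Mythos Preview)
Your inequality chain (Steps 1--3) coincides with the paper's: both combine Shen's formula, the singular Gauss--Bonnet--Chern formula of Lemma~\ref{lemmgaussbonnetchern}, conformal invariance of $\int|W^+|^2$, and Proposition~\ref{propfund} to obtain $\sum_i k_i|\partial_i M|\le\frac{4\pi}{3}\sum_i k_i$. The difference lies entirely in the treatment of the equality case.

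The paper's rigidity argument is shorter and avoids your detour. Once one knows that both $h$ and $\hat h=\phi^2 h$ are Einstein on $\Omega$, the second Bianchi identity gives $\delta_h W^+_h=0$ and $\delta_{\hat h}W^+_{\hat h}=0$. The conformal transformation law for $\delta W^+$ (this is exactly Gursky's device in \cite{Gur}) then forces $W^+_h(\cdot,\cdot,\cdot,d\log\phi)=0$ pointwise on $\Omega$; since $|W^+_h|=|\mathring{Ric}_g|$ is nowhere zero (we are not the hemisphere), this yields $d\phi=0$ on the connected dense set $\Omega$, hence $\phi$ is constant. One then reads off $R_h=2\sqrt6\,|W^+|_h$, i.e.\ $|\mathring{Ric}_g|^2=R_g^2/6$, and applies Theorem~A directly to the \emph{original} triple $(M,g,V)$. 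Because this is a pointwise computation on the smooth locus, no global compactness or edge analysis is needed.

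Your route --- apply Theorem~A to the \emph{conformal} triple $(M,\phi^2 g,\phi V)$ first, deduce it is the cylinder, then invoke an Obata--Tashiro type statement to force $\phi$ constant --- can in fact be completed: once you know the conformal triple is the cylinder you get $r=2$ and $k_1=k_2=:k$, so replacing $V$ by $V/k$ makes \emph{both} $h$ and $\hat h$ into smooth Einstein metrics on the genuine closed manifold $\mathcal N\cong S^2\times S^2$, where classical Obata applies and gives $\phi$ constant (as $S^2\times S^2\not\cong S^4$). So your worries about ``Obata--Tashiro on singular spaces'' are avoidable. Still, this is more circuitous: you pass through the converse of the Section~6 construction, Theorem~A for the conformal triple, a rescaling to desingularise, and a global rigidity theorem, whereas the paper needs only the local $\delta W^+$ formula and one application of Theorem~A. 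The payoff of the paper's approach is that it never leaves the regular part $\Omega$ and never needs to know the topology of $\mathcal N$ in the rigidity step.
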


\begin{proof}
 Let $(\mathcal{N}^{4},h)$ be the associated singular Einstein manifold to $(M^3,g,V)$. Assume $(M^3,g,V)$ is not equivalent to the standard hemisphere. As a consequence of Proposition \ref{propfund}, there exists a positive function $u\in C^{\infty}(M)$ such that $(\mathcal{N}^4,\tilde{h}=u^2h)$ satisfies
 \begin{equation*}
    \frac{1}{24}\int_{\mathcal{N}} \tilde{R}^2d\mu_{\tilde{h}} \leq \int_{\mathcal{N}}|\tilde{W}^{+}|^2_{\tilde{h}}d\mu_{\tilde{h}} = \int_{\mathcal{N}} |W^{+}|_{h}^2d\mu_{h}.
 \end{equation*}
 \indent Moreover, we can assume equality holds if and only if $\tilde{R}=2\sqrt{6}|\tilde{W}^{+}|_{\tilde{h}}$ is a positive constant. \\ 
 \indent Combining the above inequality with the Gauss-Bonnet-Chern formula (\ref{eqgaussbonnetchern}) for $(\mathcal{N}^4,\tilde{h})$, we conclude that
 \begin{equation} \label{desimplicaeinstein}
  \frac{3}{2}\int_{\mathcal{N}}|W^{+}|_{h}^2d\mu_h  \geq  8\pi^2\sum_{i=1}^{r}k_i + \frac{1}{4}\int_{M}|\mathring{\widetilde{Ric}}|_{\tilde{h}}^2 d\mu_{\tilde{h}} \geq 8\pi^2\sum_{i=1}^{r}k_i.
 \end{equation}
 \indent Now, $(\mathcal{N}^4,h=V^2d\theta^2+g)$ satisfies
 \begin{equation*}
   \int_{\mathcal{N}}|W^{+}|_{h}^2d\mu_{h}=2\pi\int_{M}|\mathring{Ric}_{g}|^2_{g}Vd\mu_{g}.
 \end{equation*}
 \indent By the fundamental formula (\ref{eqgaussbonnetchern1}) for $(M^3,g,V)$ (see Proposition \ref{propshenform} and Theorem B, $iii)$), we also have
 \begin{equation*}
 \sum_{i=1}^{r} k_i |\partial_{i} M| + \int_{M} |\mathring{Ric}_g|_{g}^2 Vd\mu_{g} = 4\pi\sum_{r=1}^{r}k_i.
 \end{equation*}
 \indent Therefore
 \begin{equation*}
  \sum_{i=1}^{r} k_{i} |\partial_i M| \leq \frac{4\pi}{3}\sum_{i=1}^{r} k_{i}.
 \end{equation*}
 \indent If equality holds above, the conformally related metric $\tilde{h}=u^2h$ must be Einstein (by (\ref{desimplicaeinstein})) and $\tilde{R}=2\sqrt{6}|\tilde{W}^{+}|_{\tilde{h}}$ must be a positive constant. As shown in \cite{Gur}, using the formulas relating the divergence of the Weyl tensors of conformally related metrics one can then conclude that $u$ must be constant. Thus, the metric $h=V^2d\theta^2+g$ also satisfies $R_{h}=2\sqrt{6}|W^{+}|_{h}$, that is, $|\mathring{Ric}_g|_{g}=\sqrt{6}$. And then Theorem A implies that $(M^3,g,V)$ is the standard cylinder.
\end{proof}

\appendix

\section{An inequality for transverse trace-free tensors}

\indent We prove a generalization of the refined Kato inequality for Codazzi tensors in three-manifolds (see \cite{Bour2}). The proof is similar to \cite{Yan}, Lemma 5.1. We hope this inequality will be useful in other contexts.

\begin{prop}
 Let $(M^3,g)$ be a Riemannian three-manifold and $T$ a symmetric $2$-tensor which has constant trace and zero divergence. Denote by $C$ the $3$-tensor defined by 
 \begin{equation*}
  C(X,Y,Z) = \nabla T(X,Y,Z) - \nabla T(X,Z,Y) \quad \text{for all} \quad X,Y,Z\in \mathcal{X}(M).
 \end{equation*}
  Then, at every $p\in M$ such that $|T|(p)\neq 0$,
 \begin{equation*}
  |\nabla |T||^2 \leq  \frac{3}{5}\left( |\nabla T|^2 + \frac{|C|^2}{2}\right).
 \end{equation*} 
\end{prop}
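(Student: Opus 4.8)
The assertion is pointwise, so fix $p\in M$ with $|T|(p)\neq 0$ and work in an orthonormal frame at $p$; it then becomes an algebraic inequality for the $3$-tensor $B:=\nabla T|_{p}$, which by the hypotheses is symmetric and trace-free in its first two slots ($g^{ij}\nabla_{k}T_{ij}=\nabla_{k}\operatorname{tr}T=0$) and satisfies the divergence condition $g^{jk}B_{ijk}=0$. First I would reduce to the trace-free case. Replacing $T$ by $\mathring T=T-\tfrac13(\operatorname{tr}T)g$ leaves $B$, $C$ and the covector $\nabla|T|^{2}=2\langle\nabla T,T\rangle$ unchanged while only increasing $|T|^{2}$, so $|\nabla|T||^{2}\le|\nabla|\mathring T||^{2}$ and the right-hand side is unaffected; if $\mathring T(p)=0$ the left-hand side already vanishes. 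Hence one may assume $T$ is trace-free at $p$ with $T(p)\neq 0$.

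Next I would split $B=S+Z$ orthogonally, with $S$ the totally symmetric (hence trace-free) part and $Z$ the complement, characterised by symmetry in the first two slots together with $Z_{ijk}+Z_{jki}+Z_{kij}=0$. A short computation gives $C_{ijk}=Z_{ijk}-Z_{ikj}$, and squaring the cyclic identity for $Z$ yields $\sum_{ijk}Z_{ijk}Z_{jki}=-\tfrac12|Z|^{2}$ and then $\sum_{ijk}Z_{ijk}Z_{ikj}=-\tfrac12|Z|^{2}$, whence $|C|^{2}=3|Z|^{2}$. Therefore
\begin{equation*}
 |\nabla T|^{2}+\tfrac12|C|^{2}=|S|^{2}+\tfrac52|Z|^{2},
\end{equation*}
and the inequality to be proved is $|\nabla|T||^{2}\le\tfrac35|S|^{2}+\tfrac32|Z|^{2}$. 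Here the divergence hypothesis enters a second time: it kills the $3$-dimensional irreducible summand of $T^{*}_{p}M\otimes S^{2}T^{*}_{p}M$, forcing $Z$ into the $5$-dimensional summand, i.e.\ all contractions of $Z$ vanish; this is the rigidity that makes the sharp constant work.

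Finally, set $\hat T=T/|T|$ (a unit trace-free symmetric $2$-tensor) and contract $B$ with $\hat T$ over the symmetric slots: $\nabla|T|=\langle B,\hat T\rangle=a+b$ with $a_{k}=\hat T_{ij}S_{ijk}$, $b_{k}=\hat T_{ij}Z_{ijk}$. Diagonalising $\hat T$ at $p$ (eigenvalues $\mu_{i}$, $\sum\mu_{i}=0$, $\sum\mu_{i}^{2}=1$) turns $|a|^{2},|b|^{2},\langle a,b\rangle$ and $|S|^{2},|Z|^{2}$ into explicit quadratic forms in the components of $S$ and $Z$, and the problem reduces to checking that $\tfrac35|S|^{2}+\tfrac32|Z|^{2}-|a+b|^{2}$ is positive semidefinite on the subspace cut out by the symmetry, trace-free and divergence constraints. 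A Lagrange-multiplier computation establishes this: the sharp bound on the totally symmetric summand is $|a|^{2}\le\tfrac35|S|^{2}$, the complementary summand satisfies $|b|^{2}\le\tfrac32|Z|^{2}$, the respective extremisers are mutually orthogonal (the first aligned with the repeated-eigenvalue axis of $\hat T$, the second transverse to it), and the cross term $\langle a,b\rangle$ is absorbed by a Cauchy--Schwarz estimate pairing the deficits $\tfrac35|S|^{2}-|a|^{2}$ and $\tfrac32|Z|^{2}-|b|^{2}$; tracking equality identifies the extremal configuration. The main obstacle is exactly this last step — securing the \emph{optimal} constant $3/5$. A naive Cauchy--Schwarz on $|a+b|^{2}$ together with separate bounds on $|a|^{2}$ and $|b|^{2}$ is lossy; one must use that the divergence condition removes the offending irreducible component of $\nabla T$ in order to control the interaction between the totally symmetric part of $\nabla T$ and the $C$-part $Z$. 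This is the computational heart of the argument, parallel to \cite{Yan}, Lemma 5.1, and to the refined Kato inequality of \cite{Bour2}.
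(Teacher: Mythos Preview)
Your setup is correct and illuminating: the orthogonal splitting $\nabla T = S + Z$ into the totally symmetric part and its complement, the identity $|C|^{2}=3|Z|^{2}$, and the reformulation of the target inequality as
\[
|\nabla|T||^{2} \;\le\; \tfrac{3}{5}\,|S|^{2} \;+\; \tfrac{3}{2}\,|Z|^{2}
\]
are all valid. The reduction to the trace-free case is also fine.

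The gap is exactly where you say the main obstacle lies, and you do not close it. You assert separate sharp bounds $|a|^{2}\le\tfrac{3}{5}|S|^{2}$ and $|b|^{2}\le\tfrac{3}{2}|Z|^{2}$ and then claim that the cross term $2\langle a,b\rangle$ is ``absorbed by a Cauchy--Schwarz estimate pairing the deficits''. But no such inequality
\[
2\langle a,b\rangle \;\le\; \bigl(\tfrac{3}{5}|S|^{2}-|a|^{2}\bigr) + \bigl(\tfrac{3}{2}|Z|^{2}-|b|^{2}\bigr)
\]
follows from what you have written; $a$ and $b$ are coupled through the common $\hat T$, and at a generic $\hat T$ the extremisers for the two summands need not be orthogonal in $V^{*}$. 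Invoking ``a Lagrange-multiplier computation'' without carrying it out is precisely the missing content: the optimisation problem is finite-dimensional but genuinely two-parameter (in $S$ and $Z$ simultaneously), and the sharp constant comes from the interaction, not from the pieces separately. As written, this step is an assertion, not a proof.

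By contrast, the paper's argument bypasses the decomposition entirely. It diagonalises $T$ at $p$, uses Cauchy--Schwarz to bound $|\nabla|T||^{2}\le\sum_{i,k}T_{ii;k}^{2}$, then exploits the constraints $\sum_{i}T_{ii;k}=0$ and $\sum_{i}T_{ik;i}=0$ together with the elementary identity
\[
(a+b)^{2}+(a-c)^{2}+(b+c)^{2}+(a-b+c)^{2}=3(a^{2}+b^{2}+c^{2})
\]
to rewrite each triple $T_{AA;A}^{2}+T_{BB;A}^{2}+T_{CC;A}^{2}$ directly in terms of off-diagonal components $T_{ik;i}$ and of $C_{iik}$. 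Summing yields the inequality with the exact constant $3/5$ in one stroke, with no cross-term to manage. If you want to complete your approach, you must either carry out the joint Lagrange-multiplier analysis in full, or find a structural reason (e.g.\ an explicit block-diagonalisation of the quadratic form in $(S,Z)$ for fixed $\hat T$) that decouples the problem; the paper's identity is effectively doing the latter in disguise.
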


\begin{proof}
 Let $e_1,e_2,e_3$ be a local orthonormal referential that diagonalizes $T$ at a point $p\in M$ where $|T|\neq 0$. Then $|\nabla |T|^2| = 4|T|^2|\nabla|T||^2$ implies
 \begin{equation*}
  |\nabla|T||^2 = \frac{1}{|T|^2}\sum_{k=1}^{3}(\sum_{i,j=1}^{3}T_{ij}T_{ij;k} )^2 = \frac{1}{|T|^2}\sum_{k=1}^{3}(\sum_{i=1}^{3}T_{ii}T_{ii;k} )^2.
 \end{equation*}
 \indent By Cauchy-Schwartz inequality,
 \begin{equation} \label{apdescs}
  |\nabla|T||^2 \leq \frac{1}{|T|^2}\sum_{k=1}^{3}(\sum_{i=1}^{3}T_{ii}^2)(\sum_{i=1}^{3}T_{ii;k}^2)=\sum_{i,k=1}^{3}T_{ii;k}^2.
 \end{equation}
 \indent Let $A,B,C\in\{1,2,3\}$ denote different indexes. Then
 \begin{equation} \label{apidentcott}
  C_{BBA}+C_{CCA} = \sum_{i=1}^3C_{ii;A} = \sum_{i=1}^3T_{ii;A} - \sum_{i=1}^3T_{iA;i} = d(tr(T))_A - div(T)_{A} =0,
 \end{equation}
\noindent by hypothesis. This implies 
 \begin{eqnarray*}
  T_{AA;A} & = & - T_{BB;A} - T_{CC:A} = - T_{BA;B} - T_{CA;C},  \\
  T_{BB;A} & = & T_{BA;B} + C_{BBA}, \\
  T_{CC;A} & = & T_{CA;C} + C_{CCA} = T_{CA;C} - C_{BBA}.
 \end{eqnarray*} 
 \indent Thus, rewriting inequality (\ref{apdescs}) as
 \begin{equation*}
  |\nabla |T||^2 \leq \sum_{i=1}^3(T_{11;i}^2+T_{22;i}^2+T_{33;i}^2),
 \end{equation*}
 \noindent each of the above summand is of the form $(a+b)^2+(a-c)^2+(b+c)^2$ for some $a,b,c\in \mathbb{R}$. Because of the algebraic identity 
 \begin{equation*}
  (a+b)^2+(a-c)^2+(b+c)^2 + (a-b+c)^2 = 3(a^2+b^2+c^2),
 \end{equation*}
 \noindent we then conclude that 
 \begin{align}
 \nonumber |\nabla|T||^2 & \leq  3(T_{21;2}^2+T_{31;3}^2 + C_{221}^2) + 3(T_{12;1}^2 + C_{112}^2+T_{32;3}^2) \\
 \nonumber               & \quad +3(C_{223}^2+T_{13;1}^2+T_{23;2}^2) \\
 \nonumber               & =   3\sum_{i\neq k}T_{ik;i}^2 + \frac{3}{2}((C_{221}^2+C_{331}^2)+ (C_{112}^2+C^2_{332}) + (C_{223}^2 + C_{113}^2) ) \\
 \nonumber               & =  \frac{3}{2}( \sum_{i\neq k}T_{ik;i}^2 + \sum_{i\neq k}T_{ki;i}^2 + \sum_{i\neq k}C_{iik}^2 ), \\
                         & = \frac{3}{2}( \sum_{i\neq k}T_{ik;i}^2 + \sum_{i\neq k}T_{ki;i}^2 + \frac{1}{2}(\sum_{i\neq k}C_{iik}^2+\sum_{i\neq k}C_{iki}^2  )), \label{apdesfund}
 \end{align}
\noindent where in the last three lines we used (\ref{apidentcott}), the symmetry of $T$ and the antisymmetry of $C$ in the last two variables. The proposition follows by combining inequalities (\ref{apdescs}) and (\ref{apdesfund}).
\end{proof}

\section{Solutions to an equation of type $\mathring{Hess V}=0$}

\indent Let $(M^n,g)$ be a Riemannian manifold admitting a non-constant solution to the equation $Hess V = (\Delta V/n)g$. These manifolds have been classified by K\"uhnel \cite{Kuhn}. In this section, we consider a particular case of this equation on compact surfaces $(\Sigma^2,g)$,
\begin{equation*}
 Hess V = \frac{1}{2}(K-3\epsilon) Vg,
\end{equation*}
\noindent where $K$ is the Gaussian curvature of $(\Sigma,g)$ and $\epsilon\in\{-1,0,1\}$. It has appeared in the proof of Proposition \ref{flow} in Section 8. \\
\indent For completeness, we prove here the classification of positive solutions of this equations. The argument follows \cite{Gal}, Lemma 3.

\begin{prop}
 Let $(\Sigma^2,g)$ be a closed surface with Gaussian curvature $K$ and $\epsilon\in\{-1,0,1\}$. If there exists a positive solution $V\in C^{\infty}(M)$ of the equation
\begin{equation} \label{apeqaux}
  Hess_{\Sigma} V = \frac{1}{2}(K - 3\epsilon)Vg,
 \end{equation}
 \noindent then $V$ is constant and $\Sigma$ has constant Gaussian curvature $K=3\epsilon$. 
\end{prop}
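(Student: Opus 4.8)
The plan is to turn the single equation (\ref{apeqaux}) into rigid algebraic information by tracing and differentiating, and then to derive a contradiction from the behaviour of the resulting metric at the poles of a surface of revolution. First I would take the trace of (\ref{apeqaux}), obtaining $\Delta_\Sigma V=(K-3\epsilon)V$, so that $\mathring{Hess}\,V=0$ and $Hess_\Sigma V=\tfrac12(\Delta V)g$. Substituting this into the identity $\operatorname{div}(Hess V)=d(\Delta V)+Ric(\nabla V,\cdot)$ and using $Ric=Kg$ on a surface gives $\nabla(\Delta V)=-2K\nabla V$. Comparing with the gradient of $\Delta V=(K-3\epsilon)V$ yields $V\nabla K=-3(K-\epsilon)\nabla V$, i.e.\ $\nabla\bigl(V^{3}(K-\epsilon)\bigr)=0$; hence $V^{3}(K-\epsilon)\equiv C$ for some constant $C$, so
\[
K=\epsilon+CV^{-3},\qquad \Delta V=-2\epsilon V+CV^{-2}.
\]
A one-line computation then shows $\nabla\bigl(|\nabla V|^{2}+\epsilon V^{2}+CV^{-1}\bigr)=\bigl(\Delta V+2\epsilon V-CV^{-2}\bigr)\nabla V=0$, so $|\nabla V|^{2}+\epsilon V^{2}+CV^{-1}\equiv D$ for a constant $D$.

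If $\epsilon=0$ this already closes the argument: $\Delta V=CV^{-2}$, integrating over the closed surface $\Sigma$ forces $C=0$, hence $\Delta V=0$, so $V$ is constant and $K\equiv 0=3\epsilon$. So assume $\epsilon=\pm1$ and, for contradiction, that $V$ is non-constant. Since $Hess_\Sigma V$ is a multiple of $g$ at every critical point, no critical point is a saddle; a short computation with the first integral $|\nabla V|^{2}+\epsilon V^{2}+CV^{-1}\equiv D$ (at a degenerate critical point one gets $\Delta V=0$ there, which pins down $C$ and $D$ and forces $|\nabla V|^{2}$ to be a function of $V$ that is $\le 0$, resp.\ vanishes only at that one value) rules out degenerate critical points. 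Thus all critical points of $V$ are non-degenerate local maxima or minima, and Morse theory gives $\Sigma\cong S^{2}$ (replacing $\mathbb{RP}^{2}$ by its orientation double cover, on which the lifted potential solves the same equation), with exactly one maximum and one minimum.

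On $\Sigma$ minus these two poles, $\mathring{Hess}\,V=0$ makes the level curves of $V$ geodesically parallel with $|\nabla V|$ constant along them, so in Fermi coordinates $g=dr^{2}+\varphi(r)^{2}ds^{2}$ is a surface of revolution with $V=V(r)$, $\varphi=V'$, and $s$ ranging over a circle of fixed length $L$. Smoothness of $g$ at each pole forces cone angle $2\pi$, i.e.\ $|\varphi'|=2\pi/L$ at both poles. Writing $a=\min V<b=\max V$, evaluating the first integral at the poles ($\nabla V=0$) gives $\epsilon a^{2}+Ca^{-1}=D=\epsilon b^{2}+Cb^{-1}$, whence $C=\epsilon ab(a+b)$ after dividing by $a-b\neq0$; and since $\varphi'=V''=\tfrac12\Delta V=-\epsilon V+\tfrac{C}{2}V^{-2}$, the two pole conditions read $-\epsilon a+\tfrac{C}{2}a^{-2}=2\pi/L$ and $-\epsilon b+\tfrac{C}{2}b^{-2}=-2\pi/L$. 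Adding these and substituting $C=\epsilon ab(a+b)$ reduces, after dividing by $\epsilon(a+b)\neq0$, to $a^{2}+b^{2}=2ab$, i.e.\ $a=b$, a contradiction. Hence $V$ is constant, and then $\Delta V=0$ gives $K\equiv 3\epsilon$.

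The formal part (tracing, deriving $K=\epsilon+CV^{-3}$ and the first integral) is routine; the real obstacle is the global step. The key points are recognizing the surface-of-revolution structure on the regular set from $\mathring{Hess}\,V=0$ and the fact that $|\nabla V|$ is constant on level curves, and then converting the smoothness of $g$ at the two poles (equal cone angle $2\pi$) into the numerical identity $a=b$; subsidiary care is needed to exclude degenerate critical points and to dispose of the non-orientable case.
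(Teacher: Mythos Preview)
Your argument is correct and follows a genuinely different path from the paper's in the non-trivial cases $\epsilon=\pm1$. Both proofs begin by tracing and differentiating to obtain the integrability condition $V^{3}(K-\epsilon)=C$, and both dispatch $\epsilon=0$ by integrating $\Delta V=CV^{-2}$ over the closed surface. From there the approaches diverge. The paper treats $\epsilon=-1$ by a maximum principle applied to $K$: since $K=cV^{-3}-1$ with $c<0$, one computes $\Delta K$ and evaluates at the extrema of $K$ to pin $K\equiv-3$. For $\epsilon=1$ the paper invokes K\"uhnel's warped-product structure and then the Gauss--Bonnet theorem, reaching the contradiction $c=2V(u_*)^3$. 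You instead derive the energy first integral $|\nabla V|^{2}+\epsilon V^{2}+CV^{-1}\equiv D$, use it (together with a simple root-count for the associated cubic) to rule out degenerate critical points, pass to the warped-product form $g=dr^{2}+(V')^{2}ds^{2}$, and impose the equal cone-angle condition $V''=\pm 2\pi/L$ at the two poles; adding these and substituting $C=\epsilon ab(a+b)$ yields $(a-b)^{2}=0$. The payoff of your route is uniformity: the same computation closes both signs of $\epsilon$ at once, and it avoids the explicit Gauss--Bonnet step (though the cone-angle matching encodes the same information). The paper's maximum-principle argument for $\epsilon=-1$ is somewhat shorter and does not require the global warped-product reduction, but your approach has the advantage of a single mechanism covering all cases.
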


\begin{proof}
 \indent Taking the divergence of (\ref{apeqaux}), we obtain the following integrability condition,
 \begin{equation} \label{apintcont}
  d((K-\epsilon)V^3)=0.
 \end{equation}
 \indent Therefore $K=cV^{-3}+\epsilon$ for some constant $c$. Integrating the trace of (\ref{apeqaux}), we have that $c$ is positive, zero or negative if $\epsilon$ equals $1, 0$ or $-1$, respectively. We analyse each case separately. \\
 \indent If $\epsilon=0$, then $c=0$ and therefore $K\equiv 0$ and $Hess V=0$. The theorem follows.\\
 \indent If $\epsilon= -1$, then $c<0$ and therefore $K<-1$. Using this and the integrability condition, we conclude that a point $p\in\Sigma$ is a critical point of $K$ if and only if it is a critical point of $V$. Moreover, 
 \begin{equation*}
  \Delta K = c\left(\frac{12}{V^5}|\nabla V|^2 - \frac{3}{V^4}\Delta V\right) =  c\left(\frac{12}{V^5}|\nabla V|^2 - \frac{3}{V^3}(K+3)\right).
 \end{equation*}
 \indent Hence, at a point $p_0\in \Sigma$ of absolute minimum of $K$,
 \begin{equation*}
  0\leq \Delta K(p_0) = -\frac{3c}{V^3(p_0)}(K(p_0)+3) \Rightarrow K(p_0)\geq -3,
 \end{equation*}
 \noindent since $c<0$ and $V$ is a positive function. Similarly, at a point $p_1\in\Sigma$ of absolute maximum of $K$, $K(p_1)\leq -3$. Therefore $K \equiv -3$ and $Hess V =0$. The result follows. \\
 \indent  If $\epsilon = 1$, then $c > 0$ and therefore $K > 1$. We can assume $\Sigma$ is homeomorphic to the sphere $S^2$. \\
 \indent Suppose, by contradiction, that $V$ is not constant. By \cite{Kuhn}, the existence of a non-constant solution $V$ to the equation 
 \begin{equation*}
  \mathring{Hess} V = 0
 \end{equation*}
 \noindent in a closed surface implies that $V$ has precisely two non-degenerated critical points (the point of minimum $p_0$ and the point of maximum $p_1$), that $V$ depends only on the distance to a critical point, and that the metric on $\Sigma\setminus\{p_0,p_1\}$ can be written in the warped-product form
 \begin{equation*}
  g = du^2 + \left(\frac{V'(u)}{V''(0)}\right)^{2} d\theta^2,
 \end{equation*}
 where $u\in(0,u_{*})$ is the distance to $p_0$ and $\theta$ is a $2\pi$-periodic variable (see \cite{Kuhn}). $V$ is an increasing function of $u$ defined on the interval $[0,u_{*}]$.\\
 \indent For the above metric, $Hess_g V = V'' g$, so equation (\ref{apeqaux}) is equivalent to 
 \begin{equation} \label{apeqaux2}
  2V'' = (K-3)V = \frac{c}{V^{2}} - 2V.
 \end{equation} 
 \indent This second order differential equation can be expressed in the Hamiltonian form using the energy
 \begin{equation*}
  H(x,y)=y^2 + \frac{c}{x} + x^2
 \end{equation*}
 \noindent for $x=V > 0$, $y=V'$. Sketching the level sets, one can verify that the constant solution of (\ref{apeqaux2}) corresponds to the point $(x,y)=((c/2)^{1/3},0)$ and that the solution $V$ of (\ref{apeqaux2}) originated in our problem is such that $V(0)<(c/2)^{1/3} < V(u_{*}) $. \\
 \indent Now we use the Gauss-Bonnet theorem and the explicit expression of the metric to compute:
 \begin{equation*}
  4\pi = \int_{\Sigma}Kd\Sigma = \int_{0}^{2\pi}\int_{0}^{u_{*}}\left[1+\frac{c}{V^3} \right]\frac{V'}{V''(0)}dud\theta = \frac{2\pi}{V''(0)}\left[V-\frac{c}{2V^2}\right]^{u_{*}}_{0}.
 \end{equation*}
 \indent By (\ref{apeqaux2}), we conclude that $c = 2V^3(u_{*})$. And this is a contradiction. 
\end{proof}

\begin{prop}
  Let $(\Sigma^2,g)$ be a compact surface with non-empty boundary. Let $K$ denote its Gaussian curvature and let $\epsilon\in\{-1,0,1\}$. Assume that there exists a non-trivial non-negative solution $V\in C^{\infty}(M)$ to the problem
\begin{equation*}
  Hess_{\Sigma} V = \frac{1}{2}(K - 3\epsilon)Vg  \quad \text{in} \quad \Sigma, \quad V=0 \quad \text{in} \quad \partial \Sigma.
 \end{equation*}
 \indent Then $\epsilon=1$, $\Sigma$ is isometric to the round hemisphere $(S^{2}_{+},g_{can})$ with constant Gaussian curvature $K=1$ and $V$ is a static potential on $(\Sigma^2,g)$.
\end{prop}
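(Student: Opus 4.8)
The plan is to adapt the argument of the closed case above; the key extra input is that the Dirichlet condition $V=0$ on $\partial\Sigma$ forces the relevant integrability constant to vanish. Taking the divergence of $Hess_\Sigma V=\tfrac12(K-3\epsilon)Vg$ and using, in dimension two, $div(Hess V)=d(\Delta V)+Ric(\nabla V,\cdot)$ together with $Ric=Kg$ and $\Delta V=(K-3\epsilon)V$, one gets, exactly as before, $d\big((K-\epsilon)V^3\big)=0$. Since $\partial\Sigma\neq\emptyset$ and $V$ vanishes there, this constant is $0$. A short argument gives $V>0$ on $\Sigma\setminus\partial\Sigma$: at an interior point where $V$ and $\nabla V$ both vanish the equation forces $Hess V=0$ there, hence $V\equiv 0$ along every geodesic issuing from that point, and an open--closed argument then propagates $V\equiv 0$, contradicting non-triviality. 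Therefore $K\equiv\epsilon$ and the equation reduces to the Obata-type equation $Hess_\Sigma V=-\epsilon V g$, with $\Delta V=-2\epsilon V$. If $\epsilon\le 0$, then $V$ solves $\Delta V+2\epsilon V=0$ with $2\epsilon\le 0$ and $V|_{\partial\Sigma}=0$, so the maximum principle on a compact connected manifold with boundary gives $V\equiv 0$, a contradiction; hence $\epsilon=1$, $K\equiv 1$, and $Hess_\Sigma V=-Vg$.

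It remains to identify $(\Sigma,g)$. Normalize so that $\max_\Sigma V=1$, attained at an interior point $p$ (where $\nabla V(p)=0$ and, from $Hess V(p)=-g$, $p$ is a nondegenerate maximum). Along a unit-speed geodesic $\gamma$ from $p$ one has $(V\circ\gamma)''=-(V\circ\gamma)$ with initial data $1,0$, so $V\circ\gamma(s)=\cos s$; since $V\ge 0$ and $\{V=0\}=\partial\Sigma$, every geodesic from $p$ is defined precisely on $[0,\pi/2]$, stays in $\Sigma\setminus\partial\Sigma$ for $s<\pi/2$, and reaches $\partial\Sigma$ at $s=\pi/2$. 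Thus $F:=\exp_p$ is a well-defined smooth map from the closed disk $D_{\pi/2}\subset T_p\Sigma$ of radius $\pi/2$ into $\Sigma$, carrying the interior into $\Sigma\setminus\partial\Sigma$ and the boundary circle into $\partial\Sigma$. As $K\equiv 1$, the first conjugate point along any geodesic from $p$ lies at distance $\pi$, so $dF$ is everywhere an isomorphism and the Gauss lemma gives $F^{*}g=ds^2+\sin^2 s\,d\theta^2$ in geodesic polar coordinates; in other words $(D_{\pi/2},F^{*}g)$ is isometric to the round hemisphere $(S^2_{+},g_{can})$. Then $F$ is a proper local diffeomorphism respecting boundaries, hence a covering map, necessarily surjective, of some degree $n\ge 1$, and $1=\chi(D_{\pi/2})=n\,\chi(\Sigma)$ forces $n=1$. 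So $F$ is a Riemannian isometry $(S^2_{+},g_{can})\to(\Sigma,g)$ under which $V\circ F=\cos(\text{distance to the pole})=x_3$. Finally $Hess_{S^2_{+}}x_3-(\Delta x_3)g_{can}-x_3\,Ric_{g_{can}}=-x_3g_{can}+2x_3g_{can}-x_3g_{can}=0$, so $V$ is a static potential on $(\Sigma^2,g)$, as claimed.

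The delicate part is this last step, turning the ``exponential map with boundary'' $F$ into a genuine isometry. One has to check that $F=\exp_p$ is smooth up to the boundary circle of $D_{\pi/2}$ (achieved by extending $(\Sigma,g)$ and $V$ smoothly across $\partial\Sigma$ to a slightly larger open surface and restricting the ambient exponential map), that $\{V=0\}$ is exactly $\partial\Sigma$ (from $V>0$ on the interior), and that $F$ is injective. Injectivity is handled above by the covering/Euler-characteristic count; alternatively, the polar form of $F^{*}g$ shows that every geodesic from $p$ meets $\partial\Sigma$ orthogonally, so two distinct radial geodesics from $p$ cannot share a boundary endpoint, after which injectivity propagates from $\partial\Sigma$ to all of $\Sigma$ by connectedness.
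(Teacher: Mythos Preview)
Your argument is correct and follows the paper's route for the essential reduction: the same divergence computation yields $d((K-\epsilon)V^3)=0$, the boundary condition forces the constant to be $0$, hence $K\equiv\epsilon$ and the equation becomes $\mathrm{Hess}_\Sigma V=-\epsilon Vg$, $\Delta V+2\epsilon V=0$. The only cosmetic difference is how $\epsilon\le 0$ is ruled out: the paper multiplies $\Delta V+2\epsilon V=0$ by $V$ and integrates by parts to get $2\epsilon\int_\Sigma V^2=\int_\Sigma|\nabla V|^2>0$, whereas you invoke the maximum principle; both are fine. Your extra step showing $V>0$ in the interior (via the second-order ODE along geodesics and an open--closed argument) is a welcome addition that the paper leaves implicit.

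The genuine difference is that you go further than the paper. The paper's proof stops once $K\equiv 1$ and $V$ satisfies the two-dimensional static (Obata) system, taking the identification of $(\Sigma,g)$ with the round hemisphere as known. You supply this identification directly: from $V\circ\gamma(s)=\cos s$ along geodesics issuing from the maximum point $p$, you build $F=\exp_p:D_{\pi/2}\to\Sigma$, use $K\equiv 1$ to see $F$ is a local isometry with $F^*g=ds^2+\sin^2 s\,d\theta^2$, and conclude that $F$ is a degree-one covering via the Euler characteristic count $1=\chi(D_{\pi/2})=n\,\chi(\Sigma)$. This is a clean, self-contained alternative to quoting the two-dimensional Obata/K\"uhnel classification, at the modest cost of checking the boundary-respecting covering statement carefully (which you address).
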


\begin{proof}
 The integrability condition (\ref{apintcont}) and the hypothesis that $V$ vanishes on $\partial \Sigma$ implies that $K=\epsilon$ on $\Sigma$. Taking the trace of equation (\ref{apeqaux}), we obtain
 \begin{equation} \label{apeqtraco}
  \Delta V + 2\epsilon V = 0.
 \end{equation}
  \indent Hence, multiplying by $V$ and integrating by parts on $\Sigma$ using that $V$ vanishes on $\partial \Sigma$ one concludes that $\epsilon$ must be positive. Therefore $V\neq 0$ satisfies the static equations (\ref{apeqaux}) and (\ref{apeqtraco}), since $(\Sigma^2,g)$ has constant Gaussian curvature $K=\epsilon=1$.
\end{proof}

\section{Regularity of solutions to a degenerated elliptic problem}

\indent Given $(M^3,g,V)$ a compact orientable static triple, let $(\mathcal{N}^4,h)$ be its associated singular Einstein manifold. We want to study the regularity properties of a positive solution $\phi\in C^{0,\mu}(\mathcal{N})\cap W^{1,2}(\mathcal{N})\cap C^{2,\alpha}(int(M))$ to the Yamabe equation 
\begin{equation} \label{apeqdiv}
 -\Delta_{h} \phi + 2\phi = 2\phi^3 
\end{equation}
\noindent in the singular space $(\mathcal{N}^4,h)$. Equation (\ref{apeqdiv}) means that the conformally related metric $\hat{h}=\phi^2 h$ on $\mathcal{N}\setminus \partial M$ has constant scalar curvature $12$. \\
\indent The first observation is that $\phi\in C^{\infty}(int(M))$ by standard elliptic regularity \cite{GilTru}. The problem is to understand the behaviour of $\phi$ near $\partial M$. In order to analyse this point, we first deduce some formulas in an appropriated coordinate system near $\partial M$. \\
\indent Let $(x^{0}=s,x^1,x^2)$ be Fermi coordinates on a small tubular neighborhood $\mathcal{U}=\{p\in M;\, s(p)=d_{\partial M}(p)< \epsilon\}$ of $\partial M$, where $(x^{1},x^2)$ are local coordinates on $\partial M$. We write $M_{s}=\{p\in M;\, d_{\partial M}(p)=s\}$ and use the subscript $s$ to denote the geometric quantities related to $M_s$ in $(M,g)$. Using the fact that $\partial M$ is totally geodesic in $(M,g)$ and the expansion of $V$ near $\partial M$ (Proposition \ref{propexpansao}), we conclude that the Laplacian of the metric $h=V^2d\theta^2+g$ takes the form:
\begin{align*}
 \Delta_{h}\phi  & =  \Delta_g\phi + \frac{1}{V}g(\nabla^{g} V,\nabla^{g} \phi)  \\
     & = (\partial_{s}^2\phi + \Delta_{s} \phi + H_{s}\partial_s \phi) + \frac{\partial_{s}V}{V}\partial_s\phi + \frac{1}{V}g^{ij}\partial_{i}V\partial_{j}\phi \\
        &  = \partial_{s}^2\phi + \frac{1}{s}\partial_s \phi + \Delta_{0}\phi + Q(\phi),
\end{align*}
\noindent where, for indexes $i,j$ running through $\{1,2\}$ and indexes $A$ through $\{0,1,2\}$,
\begin{align*} 
 \Delta_{0}\phi & = g^{ij}(0,x)(\partial_{i}\partial_{j}\phi - \Gamma_{ij}^{k}(0,x)\partial_{k}\phi) \quad \text{and} \\
 Q(\phi) & = Q^{ij}s^2\partial_{i}\partial_{j}\phi + Q^{A}s\partial_A \phi \quad \text{for} \quad Q^{i,j}, Q^{A} \in C^{\infty}([0,\epsilon)\times \partial M).
\end{align*}
\indent Hence, 
\begin{equation} \label{apeeqdiv}
 \partial_s\phi^2 + \frac{1}{s}\partial_s\phi + \Delta_{0}\phi + Q(\phi) = 2\phi - 2\phi^{3}.
\end{equation}
\indent Now, considering the metric $\tilde{h}=s^{-2}h$ for some smooth positive function $s$ on $M\setminus\partial M$ that coincides with the distance function to $\partial M$ in $\mathcal{U}$, the function $\eta = s\phi$ satisfies
\begin{equation} \label{apeqconf}
 -\Delta_{\tilde{h}} \eta + \tilde{R}\eta= 2\eta^3,
\end{equation}
\indent In the above equation, $\tilde{R}$ denotes the scalar curvature of $(\mathcal{N},\tilde{h})$. Using the formulas relating the geometric quantities associated to conformal metrics (see \cite{Bes}), one observes that $\Delta_{\tilde{h}}\eta = s^2\Delta_{h}\eta - 2ds(\nabla^{h}\eta)$ and that $\tilde{R}\in C^{\infty}(M)$ satisfies $\tilde{R}=-12+O(s^2)$ as $s$ goes to zero. \\
\indent Since $\eta$ is invariant under the $S^1$ action on $(\mathcal{N}^4,h)$, this equation reduces to a uniformly degenerated elliptic equation (in the sense of \cite{GraLee}) on the conformally compact manifold $(\overline{M}=M\setminus\partial M,\overline{g}=s^{-2}g)$. In fact, by the above formulas, we have
\begin{equation} \label{apeqyamabeconfcoord}
 s^2\partial_{s}\eta - s\partial_s \eta + s^2\Delta_s \eta + \tilde{Q}^{A}s\partial_{A}\eta = \frac{1}{6}\hat{R}\eta - 2\eta^3
\end{equation}
\noindent where $\Delta_{s}\eta = g^{ij}(\partial_{i}\partial_{j}\phi - \Gamma_{ij}^{k}\partial_{k}\phi)  $ for indexes $i,j$ running through the set $\{1,2\}$ and $\tilde{Q}^{A} \in C^{\infty}([0,\epsilon)\times\partial M)$ satisfies $\tilde{Q}^{A}=O(s^2)$ as $s$ goes to zero for all $A\in\{0,1,2\}$. Observing that $\eta=s\phi$, $\phi\in C^{0,\mu}(M)\cap C^{\infty}(int(M))$, Schauder type estimates for $\eta$ implies that $\phi=s^{-1}\eta$ is such that $s^{|\alpha|}\partial_{x}^{\alpha}\phi$ is uniformly bounded for every multi-index $\alpha$ (see \cite{GraLee}, Proposition 3.4). \\
\indent Accordingly to \cite{AkuCarMaz1}, one can go further and show that $i)$ $\phi$ is in fact conormal (i.e., the derivatives $s^{i}\partial_{s}^{i}\partial_{x^{1}}^{j}\partial_{x^{2}}^{k}\phi$ are bounded in $M$ for all non-negative integers $i,j,k$); and $ii)$ has an expansion in $s$ as a sum involving terms like $a_{pq}s^p \log^q(s)$ where $a_{pq}\in C^{\infty}(\partial M)$. The reader will find general theories about this kind of degenerated elliptic equation developed in \cite{Maz} (the ``edge-calculus") and \cite{AndChr} (we also refer the reader to \cite{AndChrFri}, where some theory is developed in a very concrete situation). \\
\indent Using the information that $\phi$ has an expansion in $s$ near $\partial M$, we compute it considering that $\phi$ satisfies
\begin{equation} \label{apeqyamabeconfcoord} 
 \partial_{s}^2\phi + \frac{1}{s}\partial_s\phi = F
\end{equation}
\noindent for $F = F(\phi) = 2\phi-2\phi^3-\Delta_{0}\phi - Q(\phi)$ smooth on $(0,\epsilon)\times \partial M$ and such that $s^{k}\partial_s^{(k)}F$ is bounded in $M$ for all positive integer $k$. \\
\indent For every $x\in \partial M$ and every $s\in(0,\epsilon)$, there exists $\zeta_s\in(0,\epsilon)$ such that $\phi(s,x)=\phi(0,x)+\partial_s\phi(\zeta_s,x)s$, by Taylor's Theorem. Since $\phi$ is smooth on the interior of $M$, from this formula for $x \mapsto \phi(0,x)$ it follows that $\phi$ restricted to $\partial M$ belongs to $C^{\infty}(\partial M)$. \\
\indent We will compute the expansion of $\phi$ using the classical method of separation of variables. We are grateful to Andr\'e Neves for suggesting this approach. \\
\indent Let $\lambda_0=0<\lambda_1\leq\lambda_2\leq \ldots$ denote the eigenvalues of the Laplacian $\Delta_0$ on $(\partial M,g_{\partial M})$ and let $\{\xi_{i}\}$ be a $L^2$-orthonormal basis of eigenfunctions,
\begin{equation*}
 \Delta_{0}\xi_i + \lambda_i \xi_i = 0.
\end{equation*}
\indent For each $s\in(0,\epsilon)$, we can write
\begin{equation*}
 \phi(s,x) = \sum_{i=0}^{+\infty} \alpha_{i}(s)\xi_{i}(x) \quad \text{where} \quad \alpha_{i}(s)=\int_{\partial M} \phi(s,x)\xi_{i}(x)d\mu_{0}(x).
\end{equation*}
\indent The convergence of the above series is to be understood in the space $L^2(\partial M,d\mu_b)$. Since $\phi$ is smooth in the interior of $M$ and continuous up to $\partial M$, we have that $\alpha_i\in C^{0}([0,\epsilon))\cap C^{\infty}((0,\epsilon))$ and, in fact, for all $s\in(0,\epsilon)$,
\begin{equation*}
 \partial_{s}^{(k)}\phi(s,x) = \sum_{i=0}^{+\infty} \alpha^{(k)}_{i}(s)\xi_{i}(x) \quad \text{where} \quad \alpha^{(k)}(s) = \frac{d^k\alpha}{ds^k}(s).
\end{equation*}
\indent Integrating (\ref{apeqyamabeconfcoord}) against each $\xi_i$ for each fixed $s\in(0,\epsilon)$ we conclude that each $\alpha_i\in C^{0}([0,\epsilon))\cap C^{\infty}((0,\epsilon))$ satisfies an equation of the form
\begin{equation} \label{apeqmodelo}
 \alpha'' + \frac{1}{s}\alpha' - \lambda\alpha = F
\end{equation}
\noindent for some constant $\lambda$ and a function $F\in C^{0}([0,s))\cap C^{\infty}((0,s))$ depending on $\phi$. In fact, $\alpha_i$ satisfies (\ref{apeqmodelo}) with $\lambda=\lambda_{i}+2$ and
\begin{equation*}
 F(s)=F_i(\phi)(s) = \int_{\partial M} (-2\phi^3(s,x)-Q(\phi)(s,x))\xi_i(x)d\mu_{0}(x).
\end{equation*}
\begin{lemm}
 Let $k$ be a non-negative integer. If $\alpha\in C^{0}([0,\epsilon))\cap C^{\infty}((0,\epsilon))$ is a solution to (\ref{apeqmodelo}) for some  $F\in C^{k}([0,\epsilon))\cap C^{\infty}((0,\epsilon))$, then $\alpha\in C^{k+2}([0,\epsilon))$.
\end{lemm}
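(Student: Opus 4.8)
The plan is to put (\ref{apeqmodelo}) in self-adjoint form, integrate once to kill the logarithmic homogeneous solution using the continuity of $\alpha$ at $s=0$, and then extract all the higher regularity from the resulting integral formula by a simple rescaling. Throughout, $s=0$ is the only issue, since $\alpha$ is already smooth on $(0,\epsilon)$.

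First I would set $G(s):=\lambda\alpha(s)+F(s)$, which is continuous on $[0,\epsilon)$ and smooth on $(0,\epsilon)$, and note that (\ref{apeqmodelo}) reads $(s\alpha')'=sG(s)$. Integrating over $0<s_{0}<s<\epsilon$ gives $s\alpha'(s)-s_{0}\alpha'(s_{0})=\int_{s_{0}}^{s}tG(t)\,dt$; since $G$ is bounded near $0$ the integral converges as $s_{0}\to0$, so $\ell:=\lim_{s_{0}\to0}s_{0}\alpha'(s_{0})$ exists. If $\ell\neq0$ then $\alpha'(s)\sim \ell/s$ and $\alpha$ would behave like $\ell\log s$ near $0$, contradicting $\alpha\in C^{0}([0,\epsilon))$; hence $\ell=0$ and
\begin{equation*}
 \alpha'(s)=\frac{1}{s}\int_{0}^{s}tG(t)\,dt,\qquad 0<s<\epsilon.
\end{equation*}

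The key step is the substitution $t=su$, which rewrites this as $\alpha'(s)=s\,H(s)$ where $H(s):=\int_{0}^{1}uG(su)\,du$, and then $\alpha''(s)=G(s)-H(s)$. The function $H$ is a weighted average of $G$ over $[0,s]$, so it inherits the regularity of $G$: if $G\in C^{m}([0,\epsilon))$ then differentiating under the integral sign yields $H^{(j)}(s)=\int_{0}^{1}u^{j+1}G^{(j)}(su)\,du$ for $j\le m$, each continuous on $[0,\epsilon)$, so $H\in C^{m}([0,\epsilon))$ with $H(0)=G(0)/2$. I would then induct on $k$. For $k=0$, $G\in C^{0}([0,\epsilon))$ gives $H\in C^{0}([0,\epsilon))$, hence $\alpha'=sH$ and $\alpha''=G-H$ both extend continuously to $s=0$ (with $\alpha'(0)=0$); a mean value argument at $s=0$ then shows $\alpha\in C^{2}([0,\epsilon))$. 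For the step, assume the claim for $k-1$; a map $F\in C^{k}([0,\epsilon))$ is in particular $C^{k-1}$, so $\alpha\in C^{k+1}([0,\epsilon))$, whence $G=\lambda\alpha+F\in C^{k}([0,\epsilon))$, so $H\in C^{k}([0,\epsilon))$ and $\alpha''=G-H\in C^{k}([0,\epsilon))$, i.e. $\alpha\in C^{k+2}([0,\epsilon))$.

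The only genuinely delicate points are the vanishing of the boundary term $\ell$ --- which is exactly where the hypothesis $\alpha\in C^{0}([0,\epsilon))$ is used, selecting the non-logarithmic branch of solutions --- and ensuring that the bootstrap does not lose derivatives despite the factors $1/s$ and $1/s^{2}$ appearing in $\alpha'$ and $\alpha''$; the rescaling $t=su$ is precisely what absorbs these factors. The remaining verifications (differentiation under the integral, the $C^{2}$ upgrade at $s=0$) are routine.
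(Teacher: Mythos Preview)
Your argument is correct and shares its opening moves with the paper's proof: both rewrite the equation as $(s\alpha')'=sG$ with $G=\lambda\alpha+F$, integrate once, and use the continuity of $\alpha$ at $s=0$ to rule out the $\log s$ branch, arriving at $\alpha'(s)=\frac{1}{s}\int_{0}^{s}tG(t)\,dt$. From there the two proofs diverge. The paper proceeds by repeated integration by parts, producing explicit formulas of the form
\[
 \alpha^{(i)}(s)=\lambda\sum_{j\le i-2}a_{j}\alpha^{(j)}(s)+\sum_{j\le i-2}b_{j}F^{(j)}(s)+\frac{c_{i}}{s^{i}}\int_{0}^{s}t^{i}\bigl(\alpha^{(i-1)}+F^{(i-1)}\bigr)\,dt,
\]
and then checks term by term that each remainder is $o(1)$ at the origin. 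Your rescaling $t=su$ is a cleaner device: it converts the singular average into $\alpha'(s)=sH(s)$ with $H(s)=\int_{0}^{1}uG(su)\,du$, so that the factors $1/s$ and $1/s^{2}$ disappear and differentiation under the integral sign transfers $C^{m}$-regularity from $G$ to $H$ automatically. The bootstrap $\alpha''=G-H$ together with the induction $F\in C^{k}\Rightarrow\alpha\in C^{k+1}\Rightarrow G\in C^{k}\Rightarrow H\in C^{k}\Rightarrow\alpha''\in C^{k}$ is then immediate. Both proofs give the same constants at $s=0$ (in particular $\alpha''(0)=G(0)/2$), but your route avoids the bookkeeping of the iterated integration-by-parts identities.
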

\begin{proof}
 We first explain the argument for $k=0$. We have
\begin{equation}
 (s\alpha')' = \lambda s\alpha + sF.
\end{equation}
 \indent Integrating from $\delta>0$ to $s<\epsilon$ we obtain
\begin{equation*}
 s\alpha'(s) - \delta\alpha'(\delta) = \lambda\int_{\delta}^{s}t\alpha(t)dt + \int_{\delta}^{s}tF(t)dt.
\end{equation*} 
\indent Since $\alpha$ is bounded, $\lim_{\delta\rightarrow 0} \delta\alpha'(\delta)=0$ (otherwise, $\alpha(s)$ would grow like $|\log(s)|$ as $s$ goes to zero). Since $\alpha$ and $F$ are bounded, we can take the limit as $\delta$ goes to zero and conclude that, for all $s\in(0,\delta)$,
\begin{equation}\label{apprimeirader}
 \alpha'(s)=\frac{\lambda}{s}\int_{0}^{s}t\alpha(t)dt + \frac{1}{s}\int_{0}^{s}tF(t)dt.
\end{equation}
\indent In particular, $\lim_{s\rightarrow 0}\alpha'(s)= 0$ because $\alpha$ and $tF$ are continuous. Moreover for every $0<\delta<s<\epsilon$, we have
\begin{equation*}
 \frac{\alpha(s)-\alpha(\delta)}{s-\delta} = \frac{\lambda}{s-\delta}\int_{\delta}^{s}\frac{1}{u}\int_{0}^{u} t\alpha(t)dt du + \frac{1}{s-\delta}\int_{\delta}^{s}\frac{1}{u}\int_{0}^{u} tF(t)dt du 
\end{equation*}
\noindent so that taking the limit as $\delta$ goes to zero we obtain
\begin{equation*}
 \frac{\alpha(s)-\alpha(0)}{s} =\frac{\lambda}{s}\int_{0}^{s}\frac{1}{u}\int_{0}^{u} t\alpha(t)dt du + \frac{1}{s}\int_{0}^{s}\frac{1}{u}\int_{0}^{u} tF(t)dt du = O(s).
\end{equation*}
\indent It follows that $\alpha$ is differentiable at the origin and satisfies $\alpha'(0)=0$. Thus, $\alpha\in C^{1}([0,\epsilon))$.\\
\indent For every $s\in(0,s)$, integration by parts now gives
\begin{equation*}
 \int_{0}^{s} t \alpha(t) dt = \frac{s^{2}}{2}\alpha(s) - \frac{1}{2}\int_{0}^{s}t^{2}\alpha'(t)dt.
\end{equation*}
\indent We claim that a similar formula also holds for the integral of $tF(t)$. In fact, observe that  $tF'(t)$ is bounded (as before, we have $\lim_{s\rightarrow 0}sF'(s)=0$). For every $0<\delta<s<\epsilon$, integration by parts gives
\begin{equation*}
 \int_{\delta}^{s}tF(t)dt = \left(\frac{s^{2}}{2}F(s)- \frac{\delta^2}{2}F(\delta)\right) - \frac{1}{2}\int_{\delta}^{s}t^{2}F'(t)dt.
\end{equation*} 
\indent Using that $F$ is continuous and $tF'(t)$ is bounded, we can make $\delta$ go to zero to obtain
\begin{equation*}
 \int_{0}^{s}tF(t)dt = \frac{s^{2}}{2}F(s) - \frac{1}{2}\int_{0}^{s}t^{2}F'(t)dt,
\end{equation*}
\noindent as claimed. \\
\indent Differentiating (\ref{apprimeirader}) at $s\in(0,\epsilon)$, we obtain
\begin{align*}
 \alpha''(s) & = -\frac{\lambda}{s^2}\int_{0}^{s}t\alpha(t)dt + \lambda\alpha(s) - \frac{1}{s^2}\int_{0}^{s}tF(t)dt + F(s) \\
          & =  \frac{\lambda}{2}\alpha(s) + \frac{1}{2}F(s) + \frac{1}{2s^2}\int_0^{s}t^2(\lambda\alpha'(t)+ F'(t))dt.
\end{align*}
\indent Observe that, as $s$ goes to zero,
\begin{equation*}
 |\frac{1}{2s^2}\int_0^{s}(t^2(\lambda\alpha'(t)+ F'(t))dt | \leq C(\sup_{t\in(0,s)}|t\alpha'(t)|+ \sup_{t\in(0,s)}|tF'(t)|) \rightarrow 0
\end{equation*}
\indent Therefore, by continuity of $\alpha$ and $F$, $\lim_{s\rightarrow 0} \alpha''(s) = (\lambda\alpha(0) + F(0))/2$ and, similarly, we obtain
\begin{equation*}
 \frac{\alpha'(s)-\alpha'(0)}{s} =  \frac{\alpha'(s)}{s} = \frac{1}{2s}\int_{0}^{s}( \lambda\alpha(t) + F(t)) dt + o(1),
\end{equation*}
\noindent so that $\alpha$ is differentiable at $s=0$ with $\alpha''(0)=(\lambda\alpha(0) + F(0))/2$. Thus, $\alpha\in C^{2}([0,\epsilon))$ as we wanted to prove. \\
\indent For the general case, one can combine successive derivatives of (\ref{apprimeirader}) and integration by parts to prove by induction that for every $i=0,1,\ldots,k+2$ and every $s\in(0,\epsilon)$, 
\begin{equation*}
 \alpha^{(i)}(s) = \lambda \sum_{j=0}^{i-2}a_j\alpha^{(j)}(s) + \sum_{j=0}^{i-2}b_jF^{(j)}(s) + \frac{c_i}{t^{i}}\int_{0}^{s} t^{i}(\alpha^{(i-1)}(t)+F^{(i-1)}(t))dt
\end{equation*} 
\noindent for some constants $a_{j},b_{j}$ and $c_{i}$. Moreover, the last term behaves as $o(1)$ as $s$ goes to zero, $\alpha^{(i)}$ is differentiable at the origin and 
\begin{equation*}
 \alpha^{(i)}(0)=\lim_{s\rightarrow 0}\alpha^{(i)}(s) = \lambda a_{i}\sum_{j=0}^{i-2}\alpha^{(j)}(0)++ b_{i} \sum_{j=0}^{i-2}F^{(j)}(0)
\end{equation*}
\noindent for every $i=0,1,\ldots,k+1$. The result follows.
\end{proof}

\indent To prove the claimed regularity for the solution $\phi\in C^{0}(M)\cap C^{\infty}(int(M))$ (see the end of Proposition \ref{propfund}), we have to observe that for every non-negative integer $k$, $\alpha_i\in C^{k}([0,\epsilon))$ for all non-negative $i$ implies that $F=F(\phi) \in C^{k}(M)\cap C^{\infty}(int M)$ and apply induction using the above lemma.

\bibliographystyle{amsbook}

\begin{thebibliography}{99}
 \bibitem{AkuCarMaz1} K. Akutagawa, G. Carron and R. Mazzeo, \textit{The Yamabe problem on stratified spaces}, Geom. Funct. Anal. 24 (2014), no. 4, 1039-1079. 
 \bibitem{AkuCarMaz2} K. Akutagawa, G. Carron and R. Mazzeo, \textit{H\"older regularity of solutions for Schr\"odinger operators on stratified spaces}, arXiv:1409.0154.
 \bibitem{AmmLauNis} B. Ammann, R. Lauter and V. Nistor, \textit{On the geometry of Riemannian manifolds with a Lie structure at infinity}, Int. J. Math. Math. Sci. 2004, no. 1-4, 161-193.
 \bibitem{And} M. Anderson, \textit{On the structure of solutions to the static vacuum Einstein equations}, Ann. Henri Poincar\'e 1 (2000), no. 6, 995-1042.
 \bibitem{AndChrDel} M. Anderson, P. Chru\'sciel and E. Delay, \textit{Non-trivial, static, geodesically complete, vacuum space-times with a negative cosmological constant}, J. High Energy Phys. 2002, no. 10, 063, 27 pp.
 \bibitem{AndChr} L. Andersson and P. Chru\'sciel, \textit{Solutions of the constraint equations in general relativity satisfying ``hyperboloidal boundary conditions"}, Dissertationes Math. (Rozprawy Mat.) 355 (1996), 100 pp.
 \bibitem{AndChrFri} L. Andersson, P. Chru\'sciel and H. Friedrich, \textit{On the regularity of solutions to the Yamabe equation and the existence of smooth hyperboloidal initial data for Einstein's field equations}, Comm. Math. Phys. 149 (1992), no. 3, 587-612.
 \bibitem{Aub} T. Aubin, \textit{Some nonlinear problems in Riemannian geometry}, Springer Monographs in Mathematics, Springer-Verlag, Berlin, 1998.
 \bibitem{AtiLeB} M. Atiyah and C. LeBrun, \textit{Curvature, cones and characteristic numbers}, Math. Proc. Cambridge Philos. Soc. 155 (2013), no. 1, 13-37.
 \bibitem{Bes} A. Besse, \textit{Einstein Manifolds}, Springer-Verlag, Berlin, 1987.
 \bibitem{BouGib} W. Boucher and G. Gibbons, \textit{Cosmic baldness}, in \textit{The very early universe}, (Cambridge, 1982), 273-278, Cambridge Univ. Press, Cambridge, 1983 (arXiv:1109.3535).
 \bibitem{BouGibHor} W. Boucher, G. Gibbons and G. Horowitz, \textit{Uniqueness theorem for anti-de Sitter spacetime}, Phys. Rev. D (3) 30 (1984), no. 12, 2447-2451.
 \bibitem{BouCar} V. Bour and G. Carron, \textit{Optimal Integral Pinching Results}, arXiv:1203.0384.
 \bibitem{Bour} J.-P. Bourguignon, \textit{Une stratification de l'espace des structures riemanniennes}, Compositio Math. 30 (1975), 1-41.
 \bibitem{Bour2} J.-P. Bourguignon, \textit{The ``magic" of Weitzenb\"ock formulas}, Variational Methods Paris (1998), Progress in Nonlinear Differential Equations and Applications IV, Birkh\"auser, 1990.
 \bibitem{Bre} S. Brendle, \textit{Constant mean curvature surfaces in warped product manifolds}, Publ. Math. Inst. Hautes \'Etudes Sci. 117 (2013), 247-269.
 \bibitem{BreMarNev} S. Brendle, F. Marques and A. Neves, \textit{Deformations of the hemisphere that increase scalar curvature}, Invent. Math. 185 (2011), no. 1, 175-197.
 \bibitem{BunMas} G. Bunting and A. Masood-ul-Alam, \textit{Nonexistence of Multiple Black Holes in Asymptotically Euclidean Static Vacuum Space-Time}, General Relativity and Gravitation 19 (1987), 147-154.
 \bibitem{ChrHer} P. Chru\'sciel and M. Herzlich, \textit{The mass of asymptotically hyperbolic Riemannian manifolds}, Pacific J. Math. \textbf{212} (2003), no 2, 231-264.
 \bibitem{ChrSim} P. Chru\'sciel and W. Simon, \textit{Towards the classification of static vacuum spacetimes with negative cosmological constant}, J. Math. Phys. 42 (2001), no. 4, 1779-1817. 
 \bibitem{Cor} J. Corvino, \textit{Scalar curvature deformation and a gluing
construction for the Einstein constraint equations}, Comm. Math. Phys. 214 (2000), no. 1, 137-189.
 \bibitem{Eic} M. Eichmair, \textit{The size of isoperimetric surfaces in 3-manifolds and a rigidity result for the upper hemisphere}, Proc. Amer. Math. Soc. 137 (2009), no. 8, 2733-2740.
 \bibitem{FedFle} H. Federer and W. Fleming, \textit{Normal and integral currents}, Ann. of Math. (2) 72 1960 458-520.
 \bibitem{FisMar} A. Fischer and J. Marsden, \textit{Linearization stability of nonlinear partial differential equations}, Proc. Symp. Pure Math. 27 (1975), 219-262.
 \bibitem{Gal} G. Galloway, \textit{On the topology of black holes}, Comm. Math. Phys. 151 (1993), no. 1, 53-66. 
 \bibitem{GalSurWoo} G. Galloway, S. Surya and E. Woolgar, \textit{On the geometry and mass of static, asymptotically AdS spacetimes, and the uniqueness of the AdS soliton}, Comm. Math. Phys., 241(1): 1-25, 2003.
 \bibitem{GibHaw} G. Gibbons and S. Hawking, \textit{Classification of gravitational instanton symmetries}, Comm. Math. Phys. 66 (1979), no. 3, 291-310.
 \bibitem{GilTru} D. Gilbarg and N. Trudinger, \textit{Elliptic partial differential equations of second order}, Classics in Mathematics. Springer-Verlag, Berlin, 2001.
 \bibitem{GraLee} C. Graham and J. Lee, \textit{Einstein metrics with prescribed conformal infinity on the ball}, Adv. Math. 87 (1991), no. 2, 186-225. 
 \bibitem{Gur} M. Gursky, \textit{Four-manifolds with $\delta W_{+}=0$ and Einstein constants of the sphere}, Math. Ann. 318 (2000), no. 3, 417-431.
 \bibitem{Gur2} M. Gursky, \textit{The principal eigenvalue of a conformally invariant differential operator, with an application to semilinear elliptic PDE}, Comm. Math. Phys. 207 (1999), no. 1, 131-143.
 \bibitem{HanWan} F. Hang and X. Wang, \textit{Rigidity theorems for compact manifolds with boundary and
positive Ricci curvature}, J. Geom. Anal. 19, 628-642.
 \bibitem{HijMonRau} O. Hijazi, S. Montiel and S. Raulot, \textit{Uniqueness of the de Sitter spacetime among static vacua with positive cosmological constant}, Ann. Glob. Anal. Geom., 47, no. 2 (2014), 167-178.
 \bibitem{Isr} W. Israel, \textit{Event horizons in static vacuum space-times}, Phys. Rev. 164 (1967), 1776-1779.
 \bibitem{Kob} O. Kobayashi, \textit{A differential equation arising from scalar curvature function}, J. Math. Soc. Japan 34, No. 4 (1982), 665-675.
 \bibitem{Kob2} O. Kobayashi, \textit{Fixed points of isometries}, Nagoya Math. J. 13 1958 63-68. 
 \bibitem{Kuhn} W. K\"uhnel, \textit{Conformal transformations between Einstein spaces}, in \textit{Conformal geometry} (Bonn, 1985/1986), 105-146, Aspects Math., E12, Vieweg, Braunschweig, 1988. 
 \bibitem{Laf} J. Lafontaine, \textit{Sur la geometrie d'une generalisation de l'equation d'Obata}, J. Math. Pures Appliquees 62 (1983), 63-72.
 \bibitem{Lin} L. Lindblom, \textit{Some properties of static general relativistic stellar models}, J.Math. Phys. 21, No. 6 (1980), 1455-1459.
 \bibitem{LeeNev} D. Lee and A. Neves, \textit{The Penrose inequality for asymptotically locally hyperbolic spaces with nonpositive mass}, arXiv:1310.3002.
 \bibitem{LiuShe} Z.-D. Liu and Z. Shen, \textit{Riemannian geometry of conical singular sets}, 
Ann. Global Anal. Geom. 16 (1998), no. 1, 29-62. 
 \bibitem{MarNev} F. Marques and A. Neves, \textit{Rigidity of min-max minimal spheres in three-manifolds}, Duke Math. J. 161 (2012), no. 14, 2725-2752.
 \bibitem{Maz} R. Mazzeo, \textit{Elliptic theory of differential edge operators. I.}
Comm. Partial Differential Equations 16 (1991), no. 10, 1615-1664. 
 \bibitem{MeeYau} W. Meeks and S.-T. Yau, \textit{The Existence of Embedded Minimal Surfaces
and the Problem of Uniqueness},  Math. Z. 179 (1982), no. 2, 151-168.
 \bibitem{HagRobSei} H. M\"uller zum Hagen, D. Robinson and H. Seifert, \textit{Black holes in static vacuum space-times}, General Relativity and Gravitation 4 (1973), 53-78.
 \bibitem{Mon} I. Mondello, \textit{The Local Yamabe Constant of Einstein Stratified Spaces}, arXiv:1411.7996.
 \bibitem{Oba} M. Obata, \textit{Certain conditions for a Riemannian manifold to be isometric with a sphere}, J. Math. Soc. Japan 14 (1962), 333-340.
 \bibitem{Pfl} M. Pflaum, \textit{Analytic and geometric study of stratified spaces}, Lecture Notes in Mathematics, 1768. Springer-Verlag, Berlin, 2001.
 \bibitem{Qin} J. Qing, \textit{On the rigidity for conformally compact Einstein manifolds}, Int. Math. Res. Not. 2003, no. 21, 1141-1153. 
 \bibitem{QinYua} J. Qing and W. Yuan, \textit{A note on static spaces and related problems}, J. Geom. Phys. 74 (2013), 18-27.
 \bibitem{Rob} D. Robinson, \textit{A simple proof of the generalization of Israel's Theorem}, General Relativity and Gravitation 8, No. 8 (1977), 695-698.
 \bibitem{Ses} H. Seshadri, \textit{On Einstein four-manifolds with $S^{1}$-actions}, Math. Z. 247 (2004), no. 3, 487-503.
 \bibitem{SchYau}  R. Schoen and S.T. Yau, \textit{Existence of incompressible minimal surfaces and the topology of three dimensional manifolds with non-negative scalar curvature}, Ann. of Math (2) 110 (1979), no. 1, 127-142.
 \bibitem{Shen} Y. Shen, \textit{A note on Fischer-Marsden's conjecture}, Proc. Amer. Math. Soc. 125 (1997), no. 3, 901-905.
 \bibitem{Wal} R. Wald, \textit{General relativity}, University of Chicago Press, Chicago, 1984.
 \bibitem{Wan} X. Wang, \textit{On the uniqueness of the AdS spacetime}, Acta Math. Sin. (Engl. Ser.) 21 (2005), no. 4, 917-922.
 \bibitem{Yan} D.-G. Yang, \textit{Rigidity of Einstein 4-manifolds with positive curvature}, Invent. Math. 142 (2000), no. 2, 435-450. 
\end{thebibliography}

\end{document}